\newtheorem{lem}{Lemma}[section]
\newtheorem{thm}[lem]{Theorem}
\newtheorem{prop}[lem]{Proposition}
\newtheorem{cor}[lem]{Corollary}
\theoremstyle{definition}
\newtheorem{exa}[lem]{Example}
\newtheorem{rem}[lem]{Remark}
\newcommand{\imp}{\Longrightarrow}
\renewcommand{\iff}{\Longleftrightarrow}
\newcommand{\Q}{\Bbb{Q}}
\newcommand{\F}[1]{\Bbb{F}_{#1}}
\newcommand{\bF}[1]{\bar{\Bbb{F}}_{#1}}
\newcommand{\Z}{\Bbb{Z}}
\newcommand{\R}{\Bbb{R}}
\newcommand{\C}{\Bbb{C}}
\newcommand{\K}{\mathcal{K}}
\newcommand{\wn}[1]{\mathcal{W}_{#1}}
\newcommand{\twn}[1]{\tilde{\mathcal{W}}_{#1}}
\newcommand{\discn}[2]{\Delta(#1,#2)}
\newcommand{\spl}[2]{\mathrm{SL}_{#1}(#2)}
\newcommand{\gl}[2]{\mathrm{GL}_{#1}(#2)}
\newcommand{\pb}[1]{\mathcal{P}(#1)}
\newcommand{\rpb}[1]{{\mathcal{RP}}(#1)}
\newcommand{\rpbker}[1]{\mathcal{RP}_1(#1)}
\newcommand{\rrpb}[1]{\overline{\mathcal{RP}}(#1)}
\newcommand{\qrpb}[1]{\widetilde{\mathcal{RP}}(#1)}
\newcommand{\rrpbker}[1]{\overline{\mathcal{RP}}_1(#1)}
\newcommand{\qrpbker}[1]{\widetilde{\mathcal{RP}}_1(#1)}
\newcommand{\bl}[1]{\mathcal{B}(#1)}
\newcommand{\rbl}[1]{{\mathcal{RB}}(#1)}
\newcommand{\rrbl}[1]{\overline{\mathcal{RB}}(#1)}
\newcommand{\qrbl}[1]{\widetilde{\mathcal{RB}}(#1)}
\newcommand{\ppb}[1]{\mathcal{A}(#1)} 
\newcommand{\gpb}[1]{\left[ #1\right]} 
\newcommand{\suss}[2]{\psi_{#1}\left( #2\right)}
\newcommand{\ks}[2]{{\K}^{\mbox{\tiny $(#1)$}}_{#2}}
\newcommand{\kv}[1]{\mathcal{L}_{#1}}
\newcommand{\pbconst}[1]{\tilde{C}_{#1}}
\newcommand{\pcconst}[1]{\tilde{D}_{#1}}
\newcommand{\bconst}[1]{C_{#1}}
\newcommand{\cconst}[1]{D_{#1}}
\newcommand{\cconstmod}[1]{\mathcal{D}_{#1}}
\newcommand{\rcr}{\mathrm{cr}}
\newcommand{\bw}{\Lambda}
\newcommand{\rbw}{\widetilde{\bw}}
\newcommand{\spec}[1]{S_{#1}}
\newcommand{\pn}[2]{\mathbb{P}^{#1}(#2)}
\newcommand{\projl}[1]{\pn{1}{#1}}
\newcommand{\indf}[3]{\ind{#2}{#1}{#3}}
\newcommand{\sq}[1]{\unitr{#1}/(\unitr{#1})^2}
\newcommand{\tnorm}[1]{\mathcal{N}_{#1}}
\newcommand{\rsq}[1]{{\mathcal{G}}_{#1}}
\newcommand{\chara}[1]{\mathrm{char}(#1)}
\newcommand{\<}{\langle}
\renewcommand{\>}{\rangle}
\newcommand{\il}[1]{\< #1 \>}
\newcommand{\card}[1]{\left| #1 \right|}
\newcommand{\pset}[2]{\{#1 : #2 \}}
\newcommand{\sset}[1]{\{ #1 \}}
\renewcommand{\forall}{\mbox{ for all }}
\newcommand{\dual}[1]{\widehat{#1}}
\newcommand{\id}[1]{\mathrm{Id}_{#1}}
\renewcommand{\ker}[1]{\mathrm{Ker}(#1)}
\newcommand{\image}[1]{\mathrm{Im}(#1)}
\newcommand{\coker}[1]{\mathrm{Coker}(#1)}
\newcommand{\unitr}[1]{{#1}^{\times}}
\newcommand{\modgen}[3]{\< #1\ |\ #2\>_{#3}}
\newcommand{\ptor}[2]{{#1}[#2]}
\newcommand{\sgr}[1]{\mathrm{R}_{#1}}
\newcommand{\rsgr}[1]{\widehat{\sgr{#1}}} 
\newcommand{\an}[1]{\left\langle{#1}\right\rangle}
\newcommand{\pf}[1]{\left\langle\!\left\langle{#1}\right\rangle\!\right\rangle}
\newcommand{\aug}[1]{\mathcal{I}_{#1}}
\newcommand{\pows}[2]{#1\left[\!\left[ #2 \right]\!\right]}
\newcommand{\laurs}[2]{{#1}\left(\!\left( #2 \right)\!\right)}
\newcommand{\igr}[1]{\Z[#1]}
\newcommand{\pp}[1]{\mathrm{p}_{#1}^+}
\newcommand{\ppm}[1]{\mathrm{p}_{#1}^-}
\newcommand{\ep}[1]{\mathrm{e}_{#1}^+} 
\newcommand{\epm}[1]{\mathrm{e}_{#1}^-} 
\newcommand{\dwn}[2]{{#1}_{(#2)}}
\newcommand{\matr}[4]{\left[\begin{array}{cc}
#1&#2\\
#3&#4\\
\end{array}
\right]}
\newcommand{\col}[2]{\left[\begin{array}{c}
#1\\
#2\\
\end{array}
\right]}
\newcommand{\vect}[1]{\mathbf{#1}}
\newcommand{\Extpow}[3]{\wedge^{#1}_{#2}(#3)}
\newcommand{\sym}[3]{\mathrm{Sym}^{#1}_{#2}(#3)}
\newcommand{\asymm}{\circ}
\newcommand{\asym}[3]{\mathrm{S}^{#1}_{#2}(#3)}
\newcommand{\qasym}[3]{\tilde{\mathrm{S}}^{#1}_{#2}(#3)}
\newcommand{\zhalf}[1]{{#1}\left[ \tfrac{1}{2}\right]}
\newcommand{\zn}[2]{{#1}\left[ \tfrac{1}{#2}\right]}
\newcommand{\zzhalf}[1]{{#1}_{\mathrm{ odd}}}
\newcommand{\genu}[2]{\mathcal{U}^{\mathrm{gen}}_{#1}(#2)}
\newcommand{\gnu}[1]{\mathcal{U}^{\mathrm{gen}}_{#1}}
\newcommand{\res}[1]{\arrowvert_{#1}}
\newcommand{\milk}[2]{K^{\mathrm{\small M}}_{#1}({#2})}
\newcommand{\kind}[1]{K^{\mathrm{\small ind}}_3(#1)}
\newcommand{\ho}[3]{\mathrm{H}_{#1}(#2,#3 )}
\newcommand{\hoz}[2]{\ho{#1}{#2}{\Z}}
\newcommand{\hot}[2]{\mathrm{H}_3\left( \spl{2}{#1},#2\right)_0}
\newcommand{\ind}[2]{\mathrm{Ind}^{#1}_{#2}}
\newcommand{\Ind}[3]{\mathrm{Ind}_{#1}^{#2}#3}
\newcommand{\Tor}[2]{\mathrm{Tor}^{\Z}_{1}(#1,#2)}
\title{The third homology of $\mathrm{SL}_2$ of local rings}
\author{Kevin Hutchinson}
\address{School of Mathematics and Statistics,
University College Dublin,
Ireland}
\email{kevin.hutchinson@ucd.ie}
\date{\today}
\keywords{
$K$-theory, Group Homology
}
\subjclass{19G99, 20G10}
\begin{document}
\bibliographystyle{plain}
\maketitle

\begin{abstract}
We describe the third homology of $SL_2$ of local rings, over $\Z\left[\tfrac{1}{2}\right]$, in terms of a 
refined Bloch group. We use this to derive a localization sequence for the third homology of $SL_2$ of 
certain discrete valuation rings.
\end{abstract}

\section{Introduction}\label{sec:intro}
\subsection{Overview and background} 
This paper is concerned with calculating and understanding the structure of 
the third homology of $\mathrm{SL}_2$, with coefficients $\zhalf{\Z}$ of local integral domains.
We generalise the main theorem of \cite{hut:cplx13} from fields 
to local domains. We then combine this with the main result of \cite{hut:arxivh3sl2dv} to 
prove a localization exact sequence for the third homology of $\mathrm{SL}_2$ of 
 certain discrete valuation rings.

In their study of Hilbert's third problem for scissors congruence of $3$-dimensional hyperbolic 
polyhedra, Dupont and Sah \cite[Appendix]{sah:dupont} proved the \emph{Bloch-Wigner} theorem: 
There is a short exact sequence
\[
0\to \mu_\C\to \ho{3}{\spl{2}{\C}}{\Z}\to \bl{\C}\to 0
\] 
where $\bl{F}$ denotes the Bloch group of the field $F$. By definition, $\bl{F}$ is the kernel 
of the Dehn invariant $\lambda: \pb{F}\to (F^\times\otimes F^\times)/\il{x\otimes y+y\otimes x}:=
\asym{2}{\Z}{F^\times}$ 
where $\pb{F}$ is the \emph{scissors congruence group} of the field $F$ and is given by an explicit 
presentation. For details, see section \ref{sec:bloch} below. 

In fact the natural map from $\ho{3}{\spl{2}{\C}}{\Z}$ to $\kind{\C}$, the indecomposable 
$K_3$ of $\C$, is an isomorphism. 
From this viewpoint, 
Suslin (\cite[Theorem]{sus:bloch}) subsequently generalised the Bloch-Wigner theorem
 to all (infinite) fields $F$: There is a natural exact sequence 
\[
0\to \widetilde{\mathrm{tor}(\mu_F,\mu_F)}\to \kind{F}\to \bl{F}\to 0
\]
where $\widetilde{\mathrm{tor}(\mu_F,\mu_F)}$ is the unique nontrivial extension of 
$\mathrm{tor}(\mu_F,\mu_F)$ by $\Z/2$ if the characteristic 
of $F$ is not $2$, and 
$\widetilde{\mathrm{tor}(\mu_F,\mu_F)}=\mathrm{tor}(\mu_F,\mu_F)$ in characteristic $2$.
Suslin's result has since been generalized to \emph{rings with many units} by Mirzaii and Mokari 
(\cite{mirzaii:mok2}). This class of rings includes, in particular, local rings with infinite residue
field. Mirzaii has also recently extended  the theorem to local rings with (sufficiently 
large) finite residue field (\cite{mirzaii:arxivbw}).   

However, for general fields, the natural map $\ho{3}{\spl{2}{F}}{\Z}\to\kind{F}$ is surjective 
but has a large kernel (see \cite{hut:rbl11}, for example). The integral homology of $\spl{2}{F}$ 
is naturally a module for the group ring $\sgr{F}:=\Z[\sq{F}]$ of square classes. This action is 
generally nontrivial, while the corresponding natural action of $\sgr{F}$ on $\kind{F}$ is the trivial 
action. The Bloch-Wigner sequence of Dupont-Sah, viewed as a statement about $\ho{3}{\spl{2}{F}}{\Z}$
was generalised to arbitrary fields in \cite{hut:cplx13}. For any field $F$ there is a natural 
short exact sequence of $\sgr{F}$-modules 
\begin{eqnarray}\label{bwA}
0\to \zhalf{\mathrm{tor}(\mu_F,\mu_F)}\to \ho{3}{\spl{2}{F}}{\zhalf{\Z}}\to \zhalf{\rbl{F}}\to 0.
\end{eqnarray}
(To avoid $2$-torsion complications, we state the result for $\zhalf{\Z}$-coefficients.) Here 
$\rbl{F}$ is the \emph{refined Bloch group} of $F$. It is an $\sgr{F}$-submodule of $\rpb{F}$, 
an $\sgr{F}$-module given by an explicit
presentation analagous to that defining the $\Z$-module $\pb{F}$ (see section \ref{sec:bloch}). 
In particular, applying $F^\times$-coinvariants to this exact sequence we recover Suslin's Bloch-Wigner 
exact sequence for $\kind{F}$, at least over $\zhalf{\Z}$. 

The first main result below (Theorem \ref{thm:h3sl2A}) is a generalization of   exact sequence
(\ref{bwA}) from fields to local integral domains. 
The residue field may be finite, apart from a few small 
exceptions arising from some limitations of our method of proof. As an application, and using the 
results of Mirzaii and Mokari referred to above, we prove (Corollary \ref{cor:bwfin}) 
that for any local domain $A$ with sufficiently large residue field the natural 
map $\ho{3}{\spl{2}{A}}{\Z}\to \kind{A}$ induces an isomorphism 
\[
\ho{3}{\spl{2}{A}}{\zhalf{\Z}}_{A^\times}\cong \zhalf{\kind{A}}.
\]

The second half of the paper is devoted to proving a localization theorem (Theorem \ref{thm:loc}) 
relating the third homology of $\mathrm{SL}_2$ of certain discrete valuation rings to the 
third homology of $\mathrm{SL}_2$ of the field of quotients: Let $F$ be a field with discrete 
valuation $v$. Let $\mathcal{O}_v$ be the associated valuation ring and let $k$ be the residue field.
Let $U_1$ be the group of units in $\mathcal{O}_v$ which map to $1$ in $k$. Suppose that 
$U_1=U_1^2$. We assume also another technical condition, affecting only $3$-torsion. 
Then there is a natural exact sequence
\begin{eqnarray}\label{loc}
0\to \ho{3}{\spl{2}{\mathcal{O}_v}}{\zhalf{\Z}}
\to \ho{3}{\spl{2}{F}}{\zhalf{\Z}}\to 
\zhalf{\rpbker{k}}\to 0.
\end{eqnarray}
Here the module $\rpbker{k}$ is the \emph{refined scissors congruence group}. It is a 
 submodule of $\rpb{k}$ which contains $\rbl{k}$. It is 
defined in section \ref{sec:bloch} below. It has 
already arisen elsewhere in connection with the calculation and properties of the third homology of 
$\mathrm{SL}_2$: in \cite{hut:laurent} it is shown that for any infinite field $k$ there is a 
natural short exact 
\[
0\to \ho{3}{\spl{2}{k}}{\zhalf{\Z}}\to \ho{3}{\spl{2}{k[t,t^{-1}]}}{\zhalf{\Z}}\to 
\zhalf{\rpbker{k}}\to 0.
\]  
 
The module $\rpbker{k}$ can be calculated explicitly in terms of classical scissors congruence 
groups (see \cite{hut:arxivh3sl2dv} for many examples). Using such calculations, we obtain 
calculations of\\
 $\ho{3}{\spl{2}{A}}{\zhalf{\Z}}$ for certain local domains. For example 
\[
\ho{3}{\spl{2}{\Z_p}}{\zhalf{\Z}}\cong \zhalf{\kind{\Q_p}}
\]
for any prime number $p\geq 11$, while for any prime $p$ 
\[
\ho{3}{\spl{2}{\pows{\Q_p}{x}}}{\zhalf{\Z}}\cong \zhalf{\kind{\laurs{\Q_p}{x}}}\oplus \zhalf{\pb{\F{p}}}
\]
 where the second factor on the right is a cyclic group whose  order is the odd part of $p+1$.

\begin{rem}
The condition $U_1=U_1^2$ in Theorem \ref{thm:loc} 
arises from our appeal to the results of \cite{hut:arxivh3sl2dv}. 
We would expect that this sequence remains exact for a much larger class of local domains.   
\end{rem}

\begin{rem} We refer to (\ref{loc}) as a `localization sequence' because of the analogy with 
$K$-theory: For a discrete valuation ring $A$ with field of quotients $F$ there is a natural 
isomorphism for any $m\geq 3$
\[
\ho{3}{\spl{m}{A}}{\zhalf{\Z}}\cong\zhalf{K_3(A)}
\]
(by the stabilization results in \cite{schlicht:arxiveuler}, for example). It follows that 
for any $m\geq 3$ there is a short exact sequence 
\[
0\to \ho{3}{\spl{m}{A}}{\zhalf{\Z}}\to \ho{3}{\spl{m}{F}}{\zhalf{\Z}} \to 
\zhalf{K_2(k)}\to 0
\]
which comes from the $K$-theory localization sequence. Note that when $m=2$, we must replace 
$K_2(k)$ with the unrelated functor $\rpbker{k}$. ( In \cite{hut:laurent} it is shown that the 
natural map $\rpbker{k}\to K_2(k)$, arising from homology stabilization, is the zero map.) 
This latter functor however seems to bear
 the same relation 
to the third homology of $\mathrm{SL}_2$ as $K_2$ does to the third homology of $\mathrm{SL}_m$ 
for large $m$. 
\end{rem}
\subsection{Layout of article}

The article is laid out as follows:

In section \ref{sec:bloch} we review the definitions of (refined) scissors congruence groups and 
Bloch groups of commutative rings and recall the required results 
from \cite{hut:cplx13} and \cite{hut:rbl11}. 

In section \ref{sec:h3sl2A} we prove (Theorem \ref{thm:h3sl2A}) 
that for local domain $A$ with sufficiently large residue field there is a natural short exact 
sequence
\[
0\to \mathrm{tor}\zhalf{(\mu_A,\mu_A)}\to \ho{3}{\spl{2}{A}}{\zhalf{\Z}}\to \zhalf{\rbl{A}}\to 0.
\]
The proof follows the same route as the proof of the corresponding theorem for fields 
in \cite{hut:cplx13}, but we 
supply all the necessary details for the convenience of the reader. In subsection \ref{sec:bwfin}, 
we combine this theorem with the recent results of Mirzaii to show that  $\zhalf{\kind{A}}$ is 
obtained from $\ho{3}{\spl{2}{A}}{\zhalf{\Z}}$ by taking coinvariants for the action of 
$A^\times$. This was not known previously in the case of  local domains with finite residue field. 

In section \ref{sec:rpbA}, we consider certain  submodules, quotient modules and special 
elements of the refined 
scissors congruence 
group $\rpb{A}$ which play an important role in our calculations. Again, we are here following 
a route already 
covered in the case of fields (in \cite{hut:rbl11}). 
Only small adaptations are needed to extend the results for fields to the more general
case of local rings, but we include (most) details for the reader's convenience. 

In the short section 
\ref{sec:char}, we review certain character-theoretic techniques from \cite{hut:arxivh3sl2dv} 
which we will need in Section \ref{sec:val}. 

In section \ref{sec:val} we use the results of the two preceding sections to 
show that for a local domain $A$ with (sufficiently large) residue field $k$ the reduction 
homomorphism $A\to k$ induces an isomorphism $\hot{A}{\zhalf{\Z}}\cong \hot{k}{\zhalf{\Z}}$, 
up to some possible $3$-torsion, and under the condition $U_1=U_1^2$.  Here, 
for any ring $A$,  
$\hot{A}{\Z}$ denotes the kernel of the map $\ho{3}{\spl{2}{A}}{\Z}\to\kind{A}$. 

In section \ref{sec:local} we use the results of the previous section to prove the localization theorem 
(Theorem\ref{thm:loc}) discussed above. We finish be applying these  results to the explicit 
calculation of $\ho{3}{\spl{2}{A}}{\zhalf{\Z}}$ for discrete valuation rings $A$. 

In an  appendix we verify that the proof of  \emph{key 
identity} $\pf{x}\cconstmod{A}=\suss{1}{x}-\suss{2}{x}$ in $\rpb{A}$ 
(see Theorem \ref{thm:df}) which was given for fields in \cite{hut:rbl11}, extends to general local 
domains. 
This identity is  crucial to  the  calculations of the second half of the article.

\subsection{Some Notation}

For a commutative ring $A$, we let $\unitr{A}$ denote the group of units of $A$.
For $x\in \unitr{A}$, we will let $\an{x} \in \sq{A}$ denote the corresponding square class. 
Let $\sgr{A}$ denote the integral group ring $\igr{\sq{A}}$ of the group $\sq{A}$. 
We will use the notation $\pf{x}$ for the basis elements, $\an{x}-1$, 
of the augmentation ideal $\aug{A}$ of $\sgr{A}$.

For any $a\in \unitr{A}$, we will let $\pp{a}$ and $\ppm{a}$ denote respectively 
the elements $1+\an{a}$ and $1-\an{a}$ 
in $\sgr{A}$. 

For any abelian group $G$ we will let $\zhalf{G}$ denote $G\otimes\zhalf{\Z}$. 

For an integer $n$, we will let 
$\zzhalf{n}$ denote the odd part of $n$. 
Thus if $G$ is a finite abelian group of order $n$, then $\zhalf{G}$ is 
a finite abelian group of order $\zzhalf{n}$.

We let $\ep{a}$ and $\epm{a}$ denote 
respectively the mutually orthogonal idempotents
\[
\ep{a}:=\frac{\pp{a}}{2}=\frac{1+\an{a}}{2},\quad 
\epm{a}:=\frac{\ppm{a}}{2}=\frac{1-\an{a}}{2}\in\zhalf{\sgr{A}}. 
\] 
(Of course, these operators depend only on the class of $a$ in $\sq{A}$.)

For a prime power $q$, the finite field with $q$ elements is denoted $\F{q}$.

Given an abelian group $G$ we let  $\asym{2}{\Z}{G}$ denote the group 
\[
\frac{G\otimes_{\Z}G}{<x\otimes y + y\otimes x | x,y \in G>}
\]
and, for $x,y\in G$, we denote by $x\asymm y$ the image of $x\otimes y$ in $\asym{2}{\Z}{G}$.

\section{Scissors congruence groups and Bloch Groups}\label{sec:bloch}
In this section we review some known results about the third homology of $\mathrm{SL}_2$ and its 
relation to $K$-theory and scissors congruence groups.

\subsection{Indecomposable $K_3$}
Let $A$ be a either a local ring or a field. Let $K_\bullet(A)$ denote the Quillen $K$-theory of $A$ and let 
$\milk{\bullet}{A}$ be the Milnor $K$-theory. There is a natural homomorphism of graded rings 
$\milk{\bullet}{A}\to K_\bullet(A)$. Indecomposable $K_3$ of $A$ is the group 
$\kind{A}:=\coker{\milk{3}{A}\to K_3(A)}$. We will require the following theorem from $K$-theory:
  
\begin{thm}\label{thm:ktheory}
Let $A$ be discrete valuation ring  with field of fractions $K$ and residue field $F$. Suppose that 
either $\mathrm{char}(K)=\mathrm{char}(F)$ or that $k$ is algebraic over $\F{p}$. Then the inclusion
$A\to K$ induces an isomorphism $\kind{A}\cong\kind{K}$.  
\end{thm}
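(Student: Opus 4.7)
My approach combines Quillen's localization long exact sequence for the DVR $A$,
\[
K_3(F) \xrightarrow{i_*} K_3(A) \xrightarrow{j^*} K_3(K) \xrightarrow{\partial_3} K_2(F),
\]
with the short exact sequence for Milnor $K$-theory,
\[
0 \to \milk{3}{A} \to \milk{3}{K} \xrightarrow{\partial_3^M} \milk{2}{F} \to 0,
\]
which is split by any choice of uniformizer, and whose boundary agrees with the Quillen tame residue under the Milnor--Quillen map and Matsumoto's identification $\milk{2}{F} = K_2(F)$. Passing to vertical cokernels in the resulting comparison square yields a four-term exact sequence
\[
K_3(F) \xrightarrow{\bar{i}_*} \kind{A} \to \kind{K} \to 0.
\]
Surjectivity of $\kind{A} \to \kind{K}$ is immediate: every class in $K_3(K)$ can be adjusted modulo $\milk{3}{K}$ (using the uniformizer splitting of $\partial_3^M$) to have vanishing tame residue, hence to lift to $K_3(A)$.

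The remaining essential task is to show $\bar{i}_* = 0$, i.e., that the image of the transfer $i_*\colon K_3(F) \to K_3(A)$ lies in $\milk{3}{A}$. In the equal-characteristic case, I would first reduce to the Henselian situation (using Henselization-invariance of both $K_3$ and $\milk{3}$ in equal characteristic) and then apply Cohen's structure theorem to obtain a ring-theoretic section $F \hookrightarrow \hat{A}$ of the residue map. This section splits both the Quillen and the Milnor localization sequences compatibly, and tracing through the resulting decompositions $K_3(K) \cong K_3(A) \oplus K_2(F)$ and $\milk{3}{K} \cong \milk{3}{A} \oplus \milk{2}{F}$ yields $\kind{A} \cong \kind{F} \cong \kind{K}$ with the inclusion-induced map the identity. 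In the finite residue case, Quillen's theorem gives $K_2(F) = \milk{2}{F} = 0$, so the Milnor residue vanishes and $\milk{3}{A} \cong \milk{3}{K}$; combining Quillen's explicit computation of $K_*(\F{q})$ with a projection-formula analysis of the transfer (exploiting that $[F]$ is trivial in the reduced $K_0(A)$), one deduces that $i_*(K_3(F)) \subseteq \milk{3}{A}$ and therefore $\bar{i}_*$ is zero.

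The main obstacle is this last step, particularly in the finite residue case. Without a Cohen section available, the vanishing of $\bar{i}_*$ requires an argument combining Quillen's description of $K_*(\F{q})$ with careful tracking of the transfer modulo the Milnor subgroup; here one may alternatively invoke \'etale $K$-theory and Soul\'e-style comparison for local rings with finite residue field. The equal-characteristic case is by contrast made tractable by the ring-theoretic section furnished by Cohen's theorem, after which all the diagrammatic splittings line up mechanically.
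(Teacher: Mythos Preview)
Your diagram---the Milnor localization sequence sitting over Quillen's localization sequence, with the snake/cokernel comparison---is exactly the framework the paper uses. The difference is in how injectivity of $\kind{A}\to\kind{K}$ is obtained.

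The paper does not attempt to analyze the transfer $i_*:K_3(F)\to K_3(A)$ at all. Instead it cites Gersten's conjecture, which is a \emph{theorem} in the two cases at hand (equicharacteristic DVRs: Quillen; DVRs with finite residue field: see e.g.\ Weibel, \emph{The $K$-book}, Cor.~6.7.2 and 6.9.2). Gersten gives directly that
\[
0\to K_3(A)\to K_3(K)\to K_2(F)\to 0
\]
is short exact, i.e.\ that the transfer $i_*$ vanishes on the nose. One snake-lemma step against the Milnor row then finishes. Your approach amounts to reproving, by hand, the weaker statement that $i_*(K_3(F))$ lands in the Milnor image---and this is where your argument has real gaps.

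In the equicharacteristic case you invoke ``Henselization-invariance of $K_3$ and $\milk{3}{-}$''. This is false: already $K_1(A)=\unitr{A}$ changes under Henselization, and the same happens in higher degrees. There is indeed a reduction of Gersten to the Henselian (or complete) case, but it goes through a genuine geometric argument (normalization and base change, as in Quillen and Gillet--Levine), not through invariance of the $K$-groups themselves. Moreover, Cohen's structure theorem gives a coefficient field for \emph{complete} local rings; Henselian is not enough in positive characteristic with imperfect residue field, so even after your (unjustified) reduction the section you want need not exist.

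In the finite-residue case your sketch (``projection-formula analysis'', ``alternatively invoke \'etale $K$-theory'') is not an argument; it is a list of tools one might hope to use. The actual proof of Gersten here is again nontrivial.

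So: keep your comparison diagram, but replace the attempt to kill $\bar{i}_*$ by a direct citation of Gersten's conjecture in the two cases of the hypothesis. That is both correct and what the paper does.
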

\begin{proof} Let $F$ be the residue field of $A$ and let $\pi$ be a uniformizer.
There is a commutative diagram with exact columns
\[
\xymatrix{
&
&
0\ar[d]
&&\\
&
\milk{3}{A}\ar[r]\ar[d]
&
\milk{3}{K}\ar^-{\delta_\pi}[r]\ar[d]
&
\milk{2}{F}\ar[r]\ar^-{=}[d]
&
0\\
0\ar[r]
&
K_3(A)\ar[r]\ar[d]
&
K_3(K)\ar^-{\delta_\pi}[r]\ar[d]
&
K_2(F)\ar[r]
&
0\\
&
\kind{A}\ar[r]\ar[d]
&
\kind{K}\ar[d]
&&\\
&
0
&
0
&&\\
}
\]
The exactness of the top row is well-known (see \cite[Chapter V,Corollary 6.6.2]{weibel:kbook} for example).
The second row is exact by Gersten's conjecture,
 which is known for the case of equicharacteristic discrete valuation rings or discrete valuation 
rings with finite residue fields (see, for 
example, \cite[Chapter V, p. 454]{weibel:kbook}). The result follows by the snake lemma.
\end{proof}

\subsection{Scissors congruence groups and  Bloch groups}

For a commutative ring  $A$, let $\wn{A}$ denote the set $\pset{u\in \unitr{A}}{1-u\in \unitr{A}}$.
The \emph{scissors congruence group} or \emph{pre-Bloch group}, $\pb{A}$, is the group 
generated 
by the elements $\gpb{x}$, $x\in \wn{A}$,  subject to the relations 
\[
R_{x,y}:\quad  \gpb{x}-\gpb{y}+\gpb{\frac{y}{x}}-\gpb{\frac{1-x^{-1}}{1-y^{-1}}}+\gpb{\frac{1-x}{1-y}} \quad 
\forall x,y, x/y \in \wn{A}.
\]

For a commutative ring $A$, the map 
\[
\lambda:\pb{A}\to \asym{2}{\Z}{\unitr{A}},\quad  [x]\mapsto \left(1-{x}\right)\asymm {x}
\]
is well-defined, and the \emph{Bloch group of $A$}, $\bl{A}\subset \pb{A}$, is defined to be the kernel of $\lambda$. 

For a field $F$, the Bloch group is known to be closely related to the indecomposable $K_3$: 
There is a natural exact sequence  
\[
0\to \widetilde{\Tor{\mu_F}{\mu_F}}\to \kind{F}\to \bl{F} \to 0
\]
where $ \widetilde{\Tor{\mu_F}{\mu_F}}$ is the unique nontrivial extension of $\Tor{\mu_F}{\mu_F}$ by $\Z/2$. (See 
Suslin \cite{sus:bloch} for infinite fields and \cite{hut:cplx13} for finite fields.)

More recently, this has been generalized to a much larger class of rings by B. Mirzaii and 
F. Y. Mokari:

\begin{thm}[Mirzaii-Mokari, \cite{mirzaii:mok2}, Mirzaii \cite{mirzaii:arxivbw}]
\label{thm:mirzaii}
Let $A$ be an integral domain with many units or a local ring whose residue field 
has order at least $11$ and is not $16$ or $32$ then there is a natural exact sequence
\[
0\to \widetilde{\Tor{\mu_A}{\mu_A}}\to \kind{A}\to \bl{A} \to 0.
\]
\end{thm}
\begin{rem} We will not define the term \emph{ring with many units}; for the present purposes 
it is enough to know that a local ring with infinite residue field is a ring with many units.
\end{rem}

\subsection{The refined Bloch group and $\ho{3}{\spl{2}{A}}{\Z}$}\label{subsec:rbl}

For any  ring $A$ there is a natural  homomorphism
\begin{eqnarray}\label{eqn:kind}
\hoz{3}{\spl{2}{A}}\to \kind{A}
\end{eqnarray}
which is surjective if $A$ is a field or commutative local ring.
We let $\hot{A}{\Z}$ denote the kernel of this map.

For any commutative ring $A$, the group extension 
\[
1\to \spl{2}{A}\to\gl{2}{A}\to \unitr{A}\to 1
\]
induces an action -- by conjugation -- of $\unitr{A}$ on $\hoz{\bullet}{\spl{2}{A}}$ which 
factors through $\sq{A}$. 

The following is \cite[Theorem 3.7]{mirzaii:thirdmany}:
\begin{prop}\label{prop:mirzaii}
For a ring with many units $A$ the map (\ref{eqn:kind}) induces an isomorphism
\[
\ho{3}{\spl{2}{A}}{\zhalf{\Z}}_{\unitr{A}}\cong\zhalf{\kind{A}}.
\]
\end{prop}

\begin{rem} We extend this result to local domains with finite residue field below (Corollary 
\ref{cor:bwfin}).
\end{rem}

\begin{cor}\label{cor:mirzaii}
Let $A$ be a local ring with infinite residue field. Let 
$\hot{A}{\Z}$ denote the kernel of the surjective homomorphism 
$\hoz{3}{\spl{2}{A}}\to \kind{A}$. Then 
\[
\hot{A}{\zhalf{\Z}}=\aug{A}\ho{3}{\spl{2}{A}}{\zhalf{\Z}}. 
\]
\end{cor}

However, as our calculations -- in \cite{hut:rbl11} and below -- 
show, the action of $\sq{A}$ on 
$\hoz{3}{\spl{2}{A}}$ is in general non-trivial.

Thus
$\hoz{3}{\spl{2}{A}}$ is naturally an $\sgr{A}$-module, and for general fields or rings, 
in order to give a Bloch-type description
of it, we must incorporate the $\sgr{A}$-module structure at each stage of the process.

Let $A$ be a commutative ring. We let \
\[
\wn{A}:= \{ u\in A^\times\ |\ 1-u\in A^\times \}.
\]
Then we define $\rpb{A}$
to be  the $\sgr{A}$-module with generators $\gpb{x}$, $x\in \wn{A}$ 
subject to the relations  
\[
S_{x,y}:\quad 0=\gpb{x}-\gpb{y}+\an{x}\gpb{ \frac{y}{x}}-\an{x^{-1}-1}\gpb{\frac{1-x^{-1}}{1-y^{-1}}}
+\an{1-x}\gpb{\frac{1-x}{1-y}},\quad \forall x,y, y/x\in \wn{A}
\]

Of course, from the definition it follows immediately that $\pb{A}=(\rpb{A})_{\unitr{A}}=
\ho{0}{\unitr{A}}{\rpb{A}}$.

Let  $\bw= (\lambda_1,\lambda_2)$ 
be the $\sgr{A}$-module homomorphism 
\[
\rpb{A}\to \aug{A}^2\oplus\asym{2}{\Z}{\unitr{A}}
\]
where 
$\lambda_1:\rpb{A}\to \aug{A}^2$ is the 
map $\gpb{x}\mapsto \pf{1-x}\pf{x}$, and $\lambda_2$ is the composite
\[
\xymatrix{
\rpb{A}\ar@{>>}[r]
&\pb{A}\ar[r]^-{\lambda}
&\asym{2}{\Z}{\unitr{A}}.
}
\] 
(It is straightforward to  verify directly  that $\bw$ is well-defined module homomorphism.) 

The  
\emph{refined Bloch group} of the commutative ring $A$ (with at least $4$ elements) 
is the $\sgr{A}$-module 
\[
\rbl{A}:=\ker{\bw: \rpb{A}\to \aug{A}^2\oplus\asym{2}{\Z}{\unitr{A}}}.
\]

For a field $F$, the refined Bloch group bears the same relation to 
$\ho{3}{\spl{2}{F}}{\zhalf{\Z}}$ as the classical Bloch group does to $\kind{F}$:
\begin{thm}[\cite{hut:cplx13}]\label{thm:BW}
Let $F$ be a field with  at least $29$ elements. Then there is a natural short exact 
sequence
\[
0\to \zhalf{\Tor{\mu_F}{\mu_F}}\to \ho{3}{\spl{2}{F}}{\zhalf{\Z}}\to \zhalf{\rbl{F}} \to 0.
\]
\end{thm}

\begin{rem}
If we take coinvariants for the action of $F^\times$ in this sequence, the resulting sequence 
is exact, and it 
is precisely the sequence of Theorem \ref{thm:mirzaii} tensored with $\zhalf{\Z}$. 
\end{rem}


\subsection{The refined scissors congruence group $\rpbker{A}$}

The \emph{refined scissors congruence group of $A$} is defined to be the $\sgr{A}$ module 
\[
\rpbker{A}:= \ker{\lambda_1:\rpb{A}\to \aug{A}^2}. 
\]

We will require the following facts below:

\begin{prop}\label{prop:rblrpbker} Let $A$ be a commutative ring. Then
\begin{enumerate}
\item The natural map $\rpbker{A}\to \pb{A}$ induces an isomorphism
\[
\zhalf{\rpbker{A}}_{\unitr{A}}\cong \zhalf{\pb{A}}.
\]
\item The natural map $\rbl{A}\to \bl{A}$ induces an isomorphism
\[
\zhalf{\rbl{A}}_{\unitr{A}}\cong \zhalf{\bl{A}}.
\]
\item The inclusion map $\rbl{A}\to\rpbker{A}$ induces an isomorphism
\[
\aug{A}\zhalf{\rbl{A}}\cong \aug{A}\zhalf{\rpbker{A}}
\]
\end{enumerate}
\end{prop}

\begin{proof} Since $\sq{A}$ is annihilated by $2$ it follows that taking 
$\sq{A}$-coinvariants is an exact functor on the category of $\zhalf{\sgr{A}}$-modules.
\begin{enumerate}
\item Taking $\sq{A}$-coinvariants of the exact sequence 
$0\to \zhalf{\rpbker{A}}\to \zhalf{\rpb{A}}$ gives an injective map 
\[
0\to \zhalf{\rpbker{A}}_{\unitr{A}}\to \zhalf{\rpb{A}}_{\unitr{A}}=\zhalf{\pb{A}}.
\]

On the other hand, the map $\zhalf{\rpbker{A}}\to\zhalf{\pb{A}}$ is surjective: Let 
$a\in\wn{A}$. Direct calculation shows that 
\[
g(a):=\pp{-1}\gpb{a}+\pf{1-a}(\gpb{a}+\an{-1}\gpb{a^{-1}})\in\ker{\lambda_1}=\rpbker{A}
\]
and $g(a)$ maps to $2\gpb{a}\in \pb{A}$. 
\item Taking $\sq{A}$-coinvariants of the exact sequence of $\zhalf{\sgr{A}}$-modules
\[
\xymatrix{
0\ar[r]
& \zhalf{\rbl{A}}\ar[r]
& \zhalf{\rpbker{A}}\ar[r]^-{\lambda_2} 
&\zhalf{\asym{2}{\Z}{\unitr{A}}}\\
}
\] 
gives the exact sequence 
\[
\xymatrix{
0\ar[r]
& \zhalf{\rbl{A}}_{\unitr{A}}\ar[r]
& \zhalf{\rpbker{A}}_{\unitr{A}}=\zhalf{\pb{A}}\ar[r]^-{\lambda} 
&\zhalf{\asym{2}{\Z}{\unitr{A}}}\\
}
\]
and hence 
\[
\zhalf{\rbl{A}}_{\unitr{A}}=\ker{\lambda:\zhalf{\pb{A}}\to \zhalf{\asym{2}{\Z}{\unitr{A}}}}=
\zhalf{\bl{A}}.
\]
\item Let $\Sigma(A):= \image{\lambda:\pb{A}\to \asym{2}{\Z}{\unitr{A}}}$. Then by 
(1) and (2) there is a commutative diagram of $\zhalf{\sgr{A}}$-modules with exact rows and 
columns:
\[
\xymatrix{
&
0\ar[d]
&
0\ar[d]
&&\\
&\aug{A}\zhalf{\rbl{A}}\ar[r]\ar[d]
&\aug{A}\zhalf{\rpbker{A}}\ar[d]
&&\\
0\ar[r]
&\zhalf{\rbl{A}}\ar[r]\ar[d]
&\zhalf{\rpbker{A}}\ar[r]^-{\lambda_2}\ar[d]
&\zhalf{\Sigma(A)}\ar[r]\ar[d]^-{=}
&0\\
0\ar[r]
&\zhalf{\bl{A}}\ar[r]\ar[d]
&\zhalf{\pb{A}}\ar[r]^-{\lambda}\ar[d]
&\zhalf{\Sigma(A)}\ar[r]
&0\\
&0
&0
&&\\
}
\]
\end{enumerate} 
\end{proof}
\section{The third homology of $\mathrm{SL}_2$ of local rings}\label{sec:h3sl2A}

In this section 
we generalize Theorem \ref{thm:BW} to local domains with sufficiently large residue field 
(Theorem \ref{thm:h3sl2A} below). The 
proof follows closely the proof of the corresponding for fields given in \cite{hut:rbl11}. 
The first step is to give a homological description 
of the the modules $\rpb{A}$ and $\rpbker{A}$.

\subsection{Alternative description  of $\rpb{A}$} 
Let $A$ be a commutative ring. A row vector $\vect{u}=(u_1,u_2)\in A^2$ is said to be \emph{unimodular} if 
$Au_1+Au_1=A$. Equivalently, $\vect{u}$ is unimodular if there exists $\vect{v}\in A^2$ such that 
\[
\col{\vect{u}}{\vect{v}}\in\gl{2}{A}.
\] 

We let $U_2=U_2(A)$ denote the set of $2$-dimensional unimodular row vectors of $A$.  $U_2$ is a 
right $\gl{2}{A}$-set. 
In particular this induces an action of $\unitr{A}=Z(\gl{2}{A})$ acting as multiplication by scalars.

Let 
\[
\gnu{n}=\genu{n}{A}:= \left\{ (\vect{u}_1,\ldots,\vect{u}_n)\in U_2^n : \col{\vect{u}_i}{\vect{u}_j}\in\gl{2}{A} 
\mbox{ for all }i\not= j\right\}.
\]

$(\unitr{A})^n$ acts entry-wise on $\gnu{n}$ and we  let $X_n=X_n(A)=\gnu{n}/(\unitr{A})^n$. 
Observe that $\gnu{n}$ and 
$X_n$ are right $\gl{2}{A}$-sets (with the natural diagonal action).  

In particular, $X_1=U_2/\unitr{A}$ and $X_n\subset X_1^n$. If $\vect{u}=(u_1,u_2)\in U_2$ we will denote the 
corresponding class in $X_1$ by $\bar{\vect{u}}$ or $[u_1,u_2]$.

We have two natural injective maps from $A$ to $X_1$:
\[
\iota_+:A\to X_1,\ a\mapsto a_+:=[a,1]\mbox{ and }\iota_-:A\to X_1,\ a\mapsto a_-:=[1,a]
\]
Clearly, $a_+=b_-$ in $X_1$ if and only if $a,b\in \unitr{A}$ and $b=a^{-1}$. We will identify $\unitr{A}$ with 
its image in $X_1$ under the map $\iota_+$.  

If $(\vect{u},\vect{v})\in \gnu{2}$, we set
\[
d(\vect{u},\vect{v}):=\mathrm{det}\left( \col{\vect{u}}{\vect{v}}\right) \in \unitr{A}
\]
and
\[
T_{\vect{u},\vect{v}}:= \col{\vect{u}}{\vect{v}}^{-1}\cdot\matr{0}{-1}{d(\vect{u},\vect{v})}{0}\in\spl{2}{A}.
\]

Thus, for $(\vect{u},\vect{v})\in \gnu{2}$ with $d=d(\vect{u},\vect{v})$ we have 
\[
\vect{u}\cdot T_{\vect{u},\vect{v}}= (0,-1)\mbox{ and } \vect{v}\cdot T_{\vect{u},\vect{v}}= (d,0)\mbox{ in } U_2
\]
and hence 
\[
\bar{\vect{u}}\cdot T_{\vect{u},\vect{v}}= 0_+,\ \bar{\vect{v}}\cdot T_{\vect{u},\vect{v}}= 0_-\mbox{ in } X_1.
\]

Let $\phi: \gnu{3}\to X_1$ be the map defined by 
\[
\phi(\vect{u},\vect{v},\vect{w}):= \bar{\vect{w}}\cdot T_{\vect{u},\vect{v}}\in X_1.
\]

\begin{lem}\label{lem:phi}
Let $A$ be a commutative ring and let $(\vect{u},\vect{v},\vect{w})\in \gnu{3}$. Then 
\[
\phi(\vect{u},\vect{v},\vect{w})=
\left( \frac{d(\vect{u},\vect{w})\cdot d(\vect{u},\vect{v})}{d(\vect{v},\vect{w})}\right)_+\in \unitr{A}\subset X_1. 
\]
\end{lem}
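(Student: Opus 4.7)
The plan is to reduce the identity to a short linear algebra calculation, exploiting the fact that once $\vect{u}$ and $\vect{v}$ form an invertible matrix, any row vector (in particular $\vect{w}$) can be expanded uniquely as an $A$-linear combination of them, and the coefficients are expressible via $2\times 2$ determinants.

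First I would set $M:=\col{\vect{u}}{\vect{v}}\in\gl{2}{A}$ with $\det(M)=d:=d(\vect{u},\vect{v})\in\unitr{A}$, and write $\vect{w}=a\vect{u}+b\vect{v}$ for the unique pair $(a,b)\in A^2$ given by $(a,b)=\vect{w}\cdot M^{-1}$. Computing $d(\vect{u},\vect{w})=\det\col{\vect{u}}{a\vect{u}+b\vect{v}}=b\cdot d$ and $d(\vect{w},\vect{v})=\det\col{a\vect{u}+b\vect{v}}{\vect{v}}=a\cdot d$ identifies the two coefficients: $a=d(\vect{w},\vect{v})/d=-d(\vect{v},\vect{w})/d$ and $b=d(\vect{u},\vect{w})/d$.

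Next I would apply $T_{\vect{u},\vect{v}}$ to $\vect{w}$ directly:
\[
\vect{w}\cdot T_{\vect{u},\vect{v}}
=(a,b)\cdot\matr{0}{-1}{d}{0}
=(bd,\,-a)
=\bigl(d(\vect{u},\vect{w}),\,d(\vect{v},\vect{w})/d\bigr)\in U_2.
\]
Since $(\vect{v},\vect{w})\in\gnu{2}$, the entry $d(\vect{v},\vect{w})$ lies in $\unitr{A}$, so in $X_1=U_2/\unitr{A}$ we may divide both coordinates by $d(\vect{v},\vect{w})/d$ to arrive at
\[
\bar{\vect{w}}\cdot T_{\vect{u},\vect{v}}
=\left[\frac{d(\vect{u},\vect{w})\cdot d(\vect{u},\vect{v})}{d(\vect{v},\vect{w})},\,1\right]
=\left(\frac{d(\vect{u},\vect{w})\cdot d(\vect{u},\vect{v})}{d(\vect{v},\vect{w})}\right)_{+},
\]
which is the claimed formula. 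The only real subtlety is sign-tracking in the cofactor expansion and the antisymmetry $d(\vect{w},\vect{v})=-d(\vect{v},\vect{w})$; no genuine obstacle arises since the hypothesis that $(\vect{u},\vect{v},\vect{w})\in\gnu{3}$ guarantees all three pairwise determinants are units, which is precisely what is needed both to invert $M$ and to pass from $U_2$ to $X_1$.
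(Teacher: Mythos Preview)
Your proof is correct and follows essentially the same route as the paper: both compute $\vect{w}\cdot T_{\vect{u},\vect{v}}=\bigl(d(\vect{u},\vect{w}),\,d(\vect{v},\vect{w})/d(\vect{u},\vect{v})\bigr)$ in $U_2$ and then pass to $X_1$. The paper simply calls this a ``straightforward direct calculation'' without spelling out the expansion $\vect{w}=a\vect{u}+b\vect{v}$, but your version is just a transparent unpacking of that same computation.
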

\begin{proof} A straightforward direct calculation gives 
\[
\vect{w}\cdot T_{\vect{u},\vect{v}}=\left( d(\vect{u},\vect{w}),\frac{d(\vect{v},\vect{w})}{d(\vect{u},\vect{v})}\right)
\in U_2.
\]
\end{proof}
\begin{cor}\label{cor:phi} Let $A$ be a commutative ring and let $(\vect{u},\vect{v},\vect{w})\in \gnu{3}$.
\begin{enumerate}
\item For all $a,b,c\in \unitr{A}$ we have 
\[
\phi(\vect{u}\cdot a,\vect{v}\cdot b,\vect{w}\cdot c)=\phi(\vect{u},\vect{v},\vect{w})\cdot a^2.
\]
\item For all $X\in \gl{2}{A}$ we have 
\[
\phi\left((\vect{u},\vect{v},\vect{w})\cdot X\right)=\phi(\vect{u},\vect{v},\vect{w})\cdot \mathrm{det}(X).
\] 
\end{enumerate}
\end{cor}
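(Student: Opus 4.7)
Both parts are computational consequences of the explicit formula
\[
\phi(\vect{u},\vect{v},\vect{w}) \;=\; \left(\frac{d(\vect{u},\vect{w})\,d(\vect{u},\vect{v})}{d(\vect{v},\vect{w})}\right)_{+}
\]
from Lemma~\ref{lem:phi}, combined with the bilinear/alternating behaviour of the determinant $d(-,-)$ under scalar rescaling and under right multiplication by $X\in\gl{2}{A}$. First I note that both expressions displayed in the corollary land in the image of $\iota_{+}:\unitr{A}\hookrightarrow X_{1}$: indeed $\phi(\vect{u},\vect{v},\vect{w})\in\unitr{A}\subset X_{1}$ by Lemma~\ref{lem:phi}, and the right-hand sides multiply this element by a unit of $A$ inside $\unitr{A}\subset X_{1}$ (not by a scalar action on $X_{1}$, which would collapse to the identity). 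So the statements really are equalities of units in $A$, pushed forward to $X_{1}$ via $\iota_{+}$.

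For part (1), I would simply compute that for $a,b,c\in\unitr{A}$,
\[
d(\vect{u}\cdot a,\vect{v}\cdot b)=ab\,d(\vect{u},\vect{v}), \quad d(\vect{u}\cdot a,\vect{w}\cdot c)=ac\,d(\vect{u},\vect{w}), \quad d(\vect{v}\cdot b,\vect{w}\cdot c)=bc\,d(\vect{v},\vect{w}),
\]
so that
\[
\frac{d(\vect{u}\cdot a,\vect{w}\cdot c)\,d(\vect{u}\cdot a,\vect{v}\cdot b)}{d(\vect{v}\cdot b,\vect{w}\cdot c)}\;=\;\frac{(ac)(ab)}{bc}\cdot\frac{d(\vect{u},\vect{w})\,d(\vect{u},\vect{v})}{d(\vect{v},\vect{w})}\;=\;a^{2}\cdot\frac{d(\vect{u},\vect{w})\,d(\vect{u},\vect{v})}{d(\vect{v},\vect{w})}.
\]
Applying Lemma~\ref{lem:phi} to both sides yields $\phi(\vect{u}\cdot a,\vect{v}\cdot b,\vect{w}\cdot c)=\phi(\vect{u},\vect{v},\vect{w})\cdot a^{2}$, with the parameters $b,c$ cancelling as expected from the fact that the image of $\phi$ only depends on the first entry up to the $\unitr{A}$-action on $X_{1}$.

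For part (2), the key observation is that for any two-row matrix and any $X\in\gl{2}{A}$,
\[
d(\vect{u}X,\vect{v}X)=\det(X)\cdot d(\vect{u},\vect{v}),
\]
since $\bigl[\begin{smallmatrix}\vect{u}X\\ \vect{v}X\end{smallmatrix}\bigr]=\bigl[\begin{smallmatrix}\vect{u}\\ \vect{v}\end{smallmatrix}\bigr]X$. Substituting this into the three determinants in Lemma~\ref{lem:phi} gives a factor of $\det(X)^{2}/\det(X)=\det(X)$, producing exactly $\phi(\vect{u},\vect{v},\vect{w})\cdot\det(X)$.

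There is no real obstacle; the only point that requires care is the bookkeeping of \emph{where} the multiplication takes place. The scalar actions of $\unitr{A}$ on $U_{2}$ descend trivially to $X_{1}$, so the expressions $\phi(\cdots)\cdot a^{2}$ and $\phi(\cdots)\cdot\det(X)$ must be read as multiplications in the group $\unitr{A}$, which are then transported to $X_{1}$ via $\iota_{+}$. Once this identification is made explicit, both identities reduce immediately to the determinant calculation above.
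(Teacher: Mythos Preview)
Your proof is correct and follows exactly the route intended by the paper: the corollary is stated without proof there as an immediate consequence of the explicit formula in Lemma~\ref{lem:phi}, and your computation with the bilinearity of $d(-,-)$ and the identity $d(\vect{u}X,\vect{v}X)=\det(X)\,d(\vect{u},\vect{v})$ is precisely the intended verification. Your remark clarifying that the multiplication by $a^{2}$ or $\det(X)$ takes place in $\unitr{A}$ (via the identification $\iota_{+}$) rather than as the scalar action on $X_{1}$ is a useful point that the paper leaves implicit.
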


Now, for $n\geq 1$, let 
\[
Y_n=Y_n(A):= \pset{(y_1,\ldots,y_n)\in (\unitr{A})^n}{y_i-y_j\in \unitr{A}\ \forall \ i\not= j}.
\]
We will consider $Y_n$ as a right $\unitr{A}$-set via $(y_1,\ldots,y_n)\cdot a:= (y_1a,\ldots,y_na)$.

For $n\geq 3$, let $\Phi_n:\gnu{n}\to (\unitr{A})^n$ be the map 
\[
\Phi_n(\vect{u}_1,\ldots,\vect{u}_n):= 
\left( \phi(\vect{u}_1,\vect{u}_2,\vect{u}_3),\ldots,\phi(\vect{u}_1,\vect{u}_2,\vect{u}_i), \ldots, 
\phi(\vect{u}_1,\vect{u}_2,\vect{u}_n)\right).
\]

\begin{lem}
For all $(\vect{u}_1,\ldots,\vect{u}_n)\in \gnu{n}$, we have 
 $\Phi_n(\vect{u}_1,\ldots,\vect{u}_n)\in Y_{n-2}$.
\end{lem}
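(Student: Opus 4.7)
The plan is to check the two conditions defining $Y_{n-2}$: that each entry of $\Phi_n(\vect{u}_1,\ldots,\vect{u}_n)$ lies in $\unitr{A}$, and that the pairwise differences of entries lie in $\unitr{A}$. The first is immediate from Lemma \ref{lem:phi}, since for $(\vect{u}_1,\vect{u}_2,\vect{u}_i)\in\gnu{3}$ the three pairwise determinants $d(\vect{u}_1,\vect{u}_2)$, $d(\vect{u}_1,\vect{u}_i)$, $d(\vect{u}_2,\vect{u}_i)$ all lie in $\unitr{A}$, so the quotient $d(\vect{u}_1,\vect{u}_i)\,d(\vect{u}_1,\vect{u}_2)/d(\vect{u}_2,\vect{u}_i)$ is a unit.

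For the second, I would use Lemma \ref{lem:phi} to write, for $3\le i<j\le n$,
\[
\phi(\vect{u}_1,\vect{u}_2,\vect{u}_i)-\phi(\vect{u}_1,\vect{u}_2,\vect{u}_j)
=\frac{d(\vect{u}_1,\vect{u}_2)}{d(\vect{u}_2,\vect{u}_i)\,d(\vect{u}_2,\vect{u}_j)}\bigl[d(\vect{u}_1,\vect{u}_i)\,d(\vect{u}_2,\vect{u}_j)-d(\vect{u}_1,\vect{u}_j)\,d(\vect{u}_2,\vect{u}_i)\bigr].
\]
The bracketed expression is exactly the kind of thing handled by the Plücker (or $2\times 2$ Laplace) identity, which for any four vectors in $A^2$ asserts
\[
d(\vect{u}_1,\vect{u}_2)\,d(\vect{u}_i,\vect{u}_j)-d(\vect{u}_1,\vect{u}_i)\,d(\vect{u}_2,\vect{u}_j)+d(\vect{u}_1,\vect{u}_j)\,d(\vect{u}_2,\vect{u}_i)=0.
\]
Substituting this in gives
\[
\phi(\vect{u}_1,\vect{u}_2,\vect{u}_i)-\phi(\vect{u}_1,\vect{u}_2,\vect{u}_j)
=\frac{d(\vect{u}_1,\vect{u}_2)^2\,d(\vect{u}_i,\vect{u}_j)}{d(\vect{u}_2,\vect{u}_i)\,d(\vect{u}_2,\vect{u}_j)}.
\]
Since $(\vect{u}_1,\ldots,\vect{u}_n)\in\gnu{n}$, every $d(\vect{u}_k,\vect{u}_l)$ with $k\neq l$ is a unit, so the right-hand side lies in $\unitr{A}$.

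The only step that requires any work is the Plücker identity over a general commutative ring $A$, but this is purely formal: it can be verified by writing out both sides in terms of the coordinates of the four vectors, or more conceptually by noting that the alternating trilinear form on $A^2$-valued triples is automatically zero (there are no nonzero alternating $3$-tensors on a rank-$2$ free module). No structural assumptions on $A$ are used beyond commutativity. Hence $\Phi_n(\vect{u}_1,\ldots,\vect{u}_n)\in Y_{n-2}$, as claimed.
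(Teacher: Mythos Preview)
Your proof is correct. The paper takes a slightly different route: rather than invoking the Pl\"ucker identity directly, it uses the matrix $T_{\vect{u}_1,\vect{u}_2}\in\spl{2}{A}$ already introduced. Since $\col{\vect{u}_i}{\vect{u}_j}\in\gl{2}{A}$ and $T_{\vect{u}_1,\vect{u}_2}$ is invertible, the product $\col{\vect{u}_i}{\vect{u}_j}\cdot T_{\vect{u}_1,\vect{u}_2}$ is again in $\gl{2}{A}$; but by the earlier computation of $\vect{w}\cdot T_{\vect{u},\vect{v}}$, each row of this product is a unit multiple of $(y_i,1)$ (respectively $(y_j,1)$), so its determinant is a unit times $y_i-y_j$.

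The two arguments are essentially equivalent---the paper's matrix manipulation is really an implicit form of the Pl\"ucker relation---but they have different flavours. The paper's version is more conceptual and reuses machinery already set up, while yours is a direct algebraic computation that has the bonus of producing the explicit formula
\[
y_i-y_j=\frac{d(\vect{u}_1,\vect{u}_2)^2\,d(\vect{u}_i,\vect{u}_j)}{d(\vect{u}_2,\vect{u}_i)\,d(\vect{u}_2,\vect{u}_j)},
\]
which makes the dependence on $d(\vect{u}_i,\vect{u}_j)$ transparent.
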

\begin{proof}
 For $3\leq i\leq n$, let 
$y_i:=\phi(\vect{u}_1,\vect{u}_2,\vect{u}_i)$. Let $3\leq i<j\leq n$. Then 
\[
\col{\vect{u}_i}{\vect{u}_j}\in\gl{2}{A} \imp \col{\vect{u}_i}{\vect{u}_j}\cdot T_{\vect{u}_1,\vect{u}_2}\in\gl{2}{A}. 
\]
But 
\[
\col{\vect{u}_i}{\vect{u}_j}\cdot T_{\vect{u}_1,\vect{u}_2}=\matr{y_i}{1}{y_j}{1}\matr{a}{0}{0}{b}
\]
for some $a,b\in\unitr{A}$. On taking the determinant, it follows that $y_i-y_j\in \unitr{A}$. 
\end{proof}

From Corollary \ref{cor:phi} we immediately deduce:

\begin{lem} Let $(\vect{u}_1,\ldots,\vect{u}_n)\in \gnu{n}$. 
\begin{enumerate}
\item For all $a_1,\ldots, a_n\in \unitr{A}$ we have 
\[
\Phi_n(\vect{u}_1\cdot a_1,\ldots,\vect{u}_n\cdot a_n)=\Phi_n(\vect{u}_1,\ldots,\vect{u}_n)\cdot a_1^2.
\]
\item For all $X\in \gl{2}{A}$ we have 
\[
\Phi_n\left((\vect{u}_1,\ldots,\vect{u}_n)\cdot X\right)=\Phi_n(\vect{u}_1,\ldots,\vect{u}_n)\cdot \mathrm{det}(X).
\]
\end{enumerate}
\end{lem}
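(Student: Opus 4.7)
The plan is to reduce both statements to coordinate-wise applications of Corollary \ref{cor:phi}. Recall that $\Phi_n(\vect{u}_1,\ldots,\vect{u}_n)$ has $i$-th coordinate $\phi(\vect{u}_1,\vect{u}_2,\vect{u}_{i+2})$ for $1\le i\le n-2$, and that $\unitr{A}$ acts on $Y_{n-2}$ by simultaneous scalar multiplication on every coordinate. So to prove each statement it suffices to show that every coordinate of $\Phi_n$ is rescaled by the same element of $\unitr{A}$.

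For part (1), I would apply Corollary \ref{cor:phi}(1) to the triple $(\vect{u}_1,\vect{u}_2,\vect{u}_{i+2})$ with scalars $(a_1,a_2,a_{i+2})$. This gives
\[
\phi(\vect{u}_1\cdot a_1,\vect{u}_2\cdot a_2,\vect{u}_{i+2}\cdot a_{i+2}) = \phi(\vect{u}_1,\vect{u}_2,\vect{u}_{i+2})\cdot a_1^2,
\]
so every coordinate is multiplied by the common scalar $a_1^2$, which is precisely the right action of $a_1^2\in \unitr{A}$ on $Y_{n-2}$. Note that the scalars $a_2,\ldots,a_n$ drop out entirely; this is a reflection of the fact that rescaling the second through $n$-th rows only affects $\phi$ by a factor of $1$ (from the $b$ and $c$ entries of Corollary \ref{cor:phi}(1), which contribute nothing since the formula only involves $a^2$).

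For part (2), I would apply Corollary \ref{cor:phi}(2) to each triple $(\vect{u}_1,\vect{u}_2,\vect{u}_{i+2})$ with the single matrix $X\in\gl{2}{A}$, obtaining
\[
\phi\bigl((\vect{u}_1,\vect{u}_2,\vect{u}_{i+2})\cdot X\bigr) = \phi(\vect{u}_1,\vect{u}_2,\vect{u}_{i+2})\cdot \mathrm{det}(X).
\]
Since the $\gl{2}{A}$-action on $\gnu{n}$ is diagonal, $(\vect{u}_1,\ldots,\vect{u}_n)\cdot X = (\vect{u}_1 X,\ldots,\vect{u}_n X)$, and every coordinate of $\Phi_n$ is rescaled by $\mathrm{det}(X)\in\unitr{A}$, which is exactly the statement of part (2).

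There is no serious obstacle here: both statements are immediate coordinate-wise consequences of Corollary \ref{cor:phi}, and the only thing one must notice is that the $\unitr{A}$-action on $Y_{n-2}$ is the uniform scalar action, so that a common rescaling factor on every coordinate of $\Phi_n$ genuinely realizes the advertised action.
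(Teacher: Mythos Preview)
Your proof is correct and is exactly the argument the paper has in mind: the paper simply states that the lemma is an immediate consequence of Corollary~\ref{cor:phi}, and you have spelled out the coordinate-wise application of that corollary. There is nothing to add.
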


It follows that $\Phi_n$ induces a well-defined map of orbit sets (which we will continue to denote $\Phi_n$)
\[
X_n/\spl{2}{A}\to Y_{n-2}/(\unitr{A})^2.
\]
Furthermore, this is a map of right $\sq{A}$-sets (noting that the matrix $X\in \gl{2}{A}$ acts via the square class 
of $\mathrm{det}(X)$ on the left).  

\begin{prop}
For all $n\geq 3$, $\Phi_n$ induces a bijection of $\sq{A}$-sets
\[
X_n/\spl{2}{A}\leftrightarrow Y_{n-2}/(\unitr{A})^2.
\] 
\end{prop}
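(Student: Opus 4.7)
My plan is to construct an explicit inverse by placing every $\spl{2}{A}$-orbit in $X_n$ into a canonical normal form whose components are exactly the coordinates in $Y_{n-2}$.

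First, I would show that every $\spl{2}{A}$-orbit in $X_n$ has a representative of the shape
\[
\vect{u}_1=(0,-1),\quad \vect{u}_2=(1,0),\quad \vect{u}_i=(y_i,1)\text{ for }i\ge 3,
\]
call this a \emph{normal form}. Starting from any $(\vect{u}_1,\ldots,\vect{u}_n)\in\gnu{n}$, acting on the right by $T=T_{\vect{u}_1,\vect{u}_2}\in\spl{2}{A}$ sends $\vect{u}_1$ to $(0,-1)$ and $\vect{u}_2$ to $(d,0)$ with $d=d(\vect{u}_1,\vect{u}_2)\in\unitr{A}$; rescaling $\vect{u}_2$ by $d^{-1}$ (legal in $X_n$) normalizes it to $(1,0)$. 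For $i\ge 3$ the second coordinate of $\vect{u}_i\cdot T$ equals $d(\vect{u}_2,\vect{u}_i)\in\unitr{A}$, so after a further unit rescaling we may assume $\vect{u}_i=(y_i,1)$. By Corollary \ref{cor:phi}, the $\spl{2}{A}$-action and the rescalings of $\vect{u}_2,\ldots,\vect{u}_n$ all leave $\Phi_n$ fixed, and Lemma \ref{lem:phi} then gives $y_i=\phi(\vect{u}_1,\vect{u}_2,\vect{u}_i)$ (the $i$-th component of $\Phi_n$) in this normal form.

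For surjectivity, given $(y_3,\ldots,y_n)\in Y_{n-2}$, I write down the candidate tuple in the normal form above. The conditions $y_i\in\unitr{A}$ and $y_i-y_j\in\unitr{A}$ defining $Y_{n-2}$ are precisely what is needed to verify the determinant conditions $d(\vect{u}_i,\vect{u}_j)\in\unitr{A}$ for all $i\neq j$, so the tuple lies in $\gnu{n}$, and by construction $\Phi_n$ of it is $(y_3,\ldots,y_n)$.

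For injectivity, suppose two normal-form tuples $(\vect{u}_i)$ and $(\vect{u}_i')$ with components $y_i$ and $y_i'$ satisfy $\Phi_n((\vect{u}_i))=\Phi_n((\vect{u}_i'))\cdot a^2$ for some $a\in\unitr{A}$, i.e.\ $y_i=a^2 y_i'$ for all $i\ge 3$. The diagonal matrix $D=\mathrm{diag}(a,a^{-1})\in\spl{2}{A}$ acts by $(0,-1)\cdot D=(0,-a^{-1})\sim(0,-1)$, $(1,0)\cdot D=(a,0)\sim(1,0)$, and $(y_i',1)\cdot D=(ay_i',a^{-1})\sim(a^2y_i',1)=(y_i,1)$, all equalities being up to unit rescalings of individual entries which are allowed in $X_n$. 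Hence the two tuples lie in the same $\spl{2}{A}$-orbit.

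The only real obstacle is bookkeeping: making sure each normalization step uses an operation (either an element of $\spl{2}{A}$ or a rescaling $\vect{u}_i\mapsto \vect{u}_i\cdot c$) that is actually allowed in the definition of $X_n/\spl{2}{A}$, and tracking the effect on $\Phi_n$ via Corollary \ref{cor:phi}. The equivariance of the bijection under $\sq{A}$ is immediate from the constructions, since scaling $\vect{u}_1$ by a representative of a square class scales every entry of $\Phi_n$ by its square.
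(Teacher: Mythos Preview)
Your proof is correct and follows essentially the same route as the paper: the paper defines the inverse map $\bar{\Psi}_n$ sending $(y_3,\ldots,y_n)$ to the normal form $(0_+,0_-,(y_3)_+,\ldots,(y_n)_+)$, checks $\Phi_n\circ\bar{\Psi}_n=\mathrm{id}$, and then uses the same diagonal matrix $\mathrm{diag}(a,a^{-1})$ and the same $T_{\vect{u}_1,\vect{u}_2}$-action to show $\bar{\Psi}_n$ is well defined on $Y_{n-2}/\unitr{A}^2$ and surjective. One minor slip: the second coordinate of $\vect{u}_i\cdot T$ is $d(\vect{u}_2,\vect{u}_i)/d(\vect{u}_1,\vect{u}_2)$ rather than $d(\vect{u}_2,\vect{u}_i)$, but since both are units this does not affect your argument.
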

\begin{proof}
Let $\Psi_n:Y_{n-2}\to \gnu{n}$ be the map 
\[
(y_3,\ldots,y_n)\mapsto \left( (0,-1),(1,0),(y_3,1),\ldots,(y_n,1)\right).
\]

Then $\Phi_n\circ\Psi_n=\id{Y_{n-2}}$ since $T_{(0,-1),(1,0)}$ is the identity matrix. 

Now let $\bar{\Psi}_n$ be the induced map from $Y_{n-2}$ to $X_n$,  given by the formula
\[
(y_3,\ldots,y_n)\mapsto \left( 0_+,0_-,(y_3)_+,\ldots,(y_n)_+\right).
\]

For any $y\in A$ and $a\in \unitr{A}$ we have
\[
[y ,1]\matr{a}{0}{0}{a^{-1}}=[ya,a^{-1}]=[ya^2,1]=(ya^2)_+\mbox{ in } X_1.
\] 

It follows that $\bar{\Psi}_n$ induces a well-defined map $Y_{n-2}/(\unitr{A})^2\to X_n/\spl{2}{A}$ satsifying
$\Phi_n\circ \bar{\Psi}_n=\id{Y_{n-2}/(\unitr{A})^2}$. 

It remains to show that $\bar{\Psi}_n:Y_{n-2}/(\unitr{A})^2\to X_n/\spl{2}{A}$ is surjective: If 
$(\vect{u}_1,\ldots,\vect{u}_n)\in \gnu{n}$ then in $X_n/\spl{2}{A}$ we have 
\begin{eqnarray*}
(\bar{\vect{u}}_1,\ldots,\bar{\vect{u}}_n)= (\bar{\vect{u}}_1,\ldots,\bar{\vect{u}}_n)\cdot T_{\vect{u}_1,\vect{u}_2}
= (0_+,0_-,(y_3)_+,\ldots,(y_n)_+)= \bar{\Psi}_n(y_3,\ldots,y_n)
\end{eqnarray*}
where $y_i=\phi(\vect{u}_1,\vect{u}_2,\vect{u}_i)$ for $i\geq 3$.
\end{proof}

Taking the quotient set for the action of $\sq{A}$ on both sides, we deduce:
\begin{cor} For $n\geq 3$,  $\Phi_n$ induces a natural bijection
\[
X_n/\gl{2}{A}\leftrightarrow Y_{n-2}/\unitr{A}.
\]
\end{cor}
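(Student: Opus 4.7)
\emph{The plan} is to deduce this directly from the preceding proposition by passing to $\sq{A}$-coinvariants of the established $\sq{A}$-equivariant bijection $X_n/\spl{2}{A} \leftrightarrow Y_{n-2}/\unitr{A}^2$.

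On the target side the identification is essentially formal: the $\sq{A}$-action on $Y_{n-2}/\unitr{A}^2$ is, by construction, induced from the residual $\unitr{A}/\unitr{A}^2$-action on the $\unitr{A}^2$-quotient of $Y_{n-2}$, so
\[
(Y_{n-2}/\unitr{A}^2)/\sq{A} = Y_{n-2}/\unitr{A}.
\]

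On the source side, the $\sq{A}$-action on $X_n/\spl{2}{A}$ comes from the residual $\gl{2}{A}/\spl{2}{A}$-action, via the determinant isomorphism $\gl{2}{A}/\spl{2}{A} \cong \unitr{A}$ arising from the short exact sequence $1\to \spl{2}{A} \to \gl{2}{A} \to \unitr{A} \to 1$. The one non-formal point to verify is that this $\unitr{A}$-action really factors through $\sq{A}$: given $a \in \unitr{A}$, the element $\mathrm{diag}(a^2,1) \in \gl{2}{A}$ is a lift of $a^2$, and it factors as $\mathrm{diag}(a,a^{-1}) \cdot aI$, where $\mathrm{diag}(a,a^{-1})$ lies in $\spl{2}{A}$ and the scalar matrix $aI$ acts trivially on $X_n = \gnu{n}/\unitr{A}^n$. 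Hence $\unitr{A}^2$ acts trivially on $X_n/\spl{2}{A}$, the $\sq{A}$-action is well-defined, and
\[
(X_n/\spl{2}{A})/\sq{A} = X_n/\gl{2}{A}.
\]

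Applying the functor of $\sq{A}$-coinvariants to the bijection of the proposition then yields the stated bijection, and functoriality in $\Phi_n$ automatically ensures the naturality claim. There is no real obstacle here: the corollary is essentially a bookkeeping reformulation of the proposition, the only substantive verification being the scalar-triviality observation above.
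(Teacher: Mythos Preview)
Your argument is correct and follows the same approach as the paper: the corollary is obtained simply by taking $\sq{A}$-quotients on both sides of the preceding proposition's $\sq{A}$-equivariant bijection. The paper states this in a single line, having already observed (just before the proposition) that the $\gl{2}{A}$-action on $X_n/\spl{2}{A}$ factors through $\sq{A}$; your explicit verification via $\mathrm{diag}(a^2,1)=\mathrm{diag}(a,a^{-1})\cdot aI$ is a welcome spelling-out of that earlier remark.
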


For $n\geq 1$, we let 
\[
Z_n=Z_n(A):=\pset{ (z_1,\ldots,z_n)\in \wn{A}^n}{z_i/z_j\in \wn{A}\ \forall i\not= j}
\]
and we let $Z_0=Z_0(A):=\sset{1}$.

We observe that for $n\geq 1$ there is a natural bijection of $\unitr{A}$-sets 
\[
Y_n\leftrightarrow \unitr{A}\times Z_{n-1},\ (y_1,\ldots,y_n)\leftrightarrow 
\left(y_1,\left(\frac{y_2}{y_1},\ldots,\frac{y_n}{y_1}\right)\right)
\]
(where $\unitr{A}$ acts on the first factor of the right-hand side). Thus, taking quotient sets  for the action 
of $(\unitr{A})^2$ and $\unitr{A}$, we obtain:

\begin{cor}
For all $n\geq 3$ we have natural bijections
\begin{eqnarray*}
X_n/\spl{2}{A}&\leftrightarrow &\sq{A}\times Z_{n-3}\\
X_n/\gl{2}{A}&\leftrightarrow & Z_{n-3}\\
\end{eqnarray*}
\end{cor}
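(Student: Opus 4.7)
The plan is to compose the previously established bijections with the bijection $Y_n \leftrightarrow \unitr{A} \times Z_{n-1}$ displayed immediately before the statement. From the preceding proposition and its corollary we already have
\[
X_n/\spl{2}{A} \leftrightarrow Y_{n-2}/\unitr{A}^2 \quad\text{and}\quad X_n/\gl{2}{A} \leftrightarrow Y_{n-2}/\unitr{A},
\]
so it suffices to identify $Y_{n-2}/\unitr{A}^2$ with $\sq{A} \times Z_{n-3}$ and $Y_{n-2}/\unitr{A}$ with $Z_{n-3}$.

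First I would verify that the map $(y_1, \ldots, y_{n-2}) \mapsto (y_1, (y_2/y_1, \ldots, y_{n-2}/y_1))$ is a well-defined bijection from $Y_{n-2}$ to $\unitr{A} \times Z_{n-3}$. Each ratio $y_i/y_1$ lies in $\unitr{A}$, and $1 - y_i/y_1 = (y_1 - y_i)/y_1 \in \unitr{A}$ by the defining condition of $Y_{n-2}$, so $y_i/y_1 \in \wn{A}$; similarly $(y_i/y_1)/(y_j/y_1) = y_i/y_j \in \wn{A}$, showing the tuple of ratios lies in $Z_{n-3}$. The inverse sends $(u,(z_3,\ldots,z_{n-2}))$ to $(u, u z_3, \ldots, u z_{n-2})$. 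For $n = 3$ the identifications degenerate to $Y_1 = \unitr{A}$ and $Z_0 = \{1\}$, which is immediate from the convention.

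Next I would check that this bijection is $\unitr{A}$-equivariant, where $\unitr{A}$ acts diagonally on $Y_{n-2}$ and only on the first factor of $\unitr{A} \times Z_{n-3}$: the ratios $y_i/y_j$ are invariant under the diagonal scaling $(y_1,\ldots,y_{n-2})\mapsto (a y_1,\ldots,a y_{n-2})$, while the first coordinate transforms as $y_1 \mapsto a y_1$. Consequently, the same bijection descends to the respective quotient sets on each side.

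Finally, passing to quotients: quotienting by $\unitr{A}^2$ acting on the $\unitr{A}$ factor replaces $\unitr{A}$ by $\unitr{A}/\unitr{A}^2 = \sq{A}$, giving the first claimed bijection, and quotienting by the full action of $\unitr{A}$ collapses the first factor to a point, giving the second. No step poses a real obstacle: the whole argument is a mechanical composition of established bijections, the only mild subtleties being the degenerate case $n=3$ and the verification that the ratio construction indeed lands in $Z_{n-3}$.
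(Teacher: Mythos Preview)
Your proposal is correct and follows exactly the paper's approach: the paper records the $\unitr{A}$-equivariant bijection $Y_n\leftrightarrow \unitr{A}\times Z_{n-1}$ just before the corollary and then passes to the quotients by $\unitr{A}^2$ and $\unitr{A}$, composing with the earlier identifications $X_n/\spl{2}{A}\leftrightarrow Y_{n-2}/\unitr{A}^2$ and $X_n/\gl{2}{A}\leftrightarrow Y_{n-2}/\unitr{A}$. Your write-up even supplies more detail than the paper on why the ratios land in $Z_{n-3}$ and why the map is equivariant; the only blemish is a harmless indexing slip in your inverse map (it should read $(u,(z_2,\ldots,z_{n-2}))\mapsto (u,uz_2,\ldots,uz_{n-2})$ so as to have $n-3$ entries).
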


\begin{rem} Retracing our steps above, an explicit formula for the first bijection is 
\begin{eqnarray*}
(\bar{\vect{u}}_1,\ldots,\bar{\vect{u}}_n)&\mapsto &
\left( \an{\phi(\vect{u}_1,\vect{u}_2,\vect{u}_3)},\left(
\frac{\phi(\vect{u}_1,\vect{u}_2,\vect{u}_4)}{\phi(\vect{u}_1,\vect{u}_2,\vect{u}_3)},\ldots,
\frac{\phi(\vect{u}_1,\vect{u}_2,\vect{u}_n)}{\phi(\vect{u}_1,\vect{u}_2,\vect{u}_3)}\right)\right)\\
&=& 
\left( \an{\frac{d(\vect{u}_1,\vect{u}_3)d(\vect{u}_1,\vect{u}_2)}{d(\vect{u}_2,\vect{u}_3)}},
\left(\frac{d(\vect{u}_1,\vect{u}_4)d(\vect{u}_2,\vect{u}_3)}{d(\vect{u}_2,\vect{u}_4)d(\vect{u}_1,\vect{u}_3)},
\ldots,\frac{d(\vect{u}_1,\vect{u}_n)d(\vect{u}_2,\vect{u}_3)}{d(\vect{u}_2,\vect{u}_n)d(\vect{u}_1,\vect{u}_3)}
\right)\right)\\
\end{eqnarray*}
and hence the formula for the second is 
\[
(\bar{\vect{u}}_1,\ldots,\bar{\vect{u}}_n) \mapsto 
\left(\frac{d(\vect{u}_1,\vect{u}_4)d(\vect{u}_2,\vect{u}_3)}{d(\vect{u}_2,\vect{u}_4)d(\vect{u}_1,\vect{u}_3)},
\ldots,\frac{d(\vect{u}_1,\vect{u}_n)d(\vect{u}_2,\vect{u}_3)}{d(\vect{u}_2,\vect{u}_n)d(\vect{u}_1,\vect{u}_3)}
\right).
\]
\end{rem}
\begin{rem} If $A=F$ is a field, then clearly $X_1=(F^{2}\setminus\sset{0})/F^\times=\projl{F}$ 
and more generally $X_n$ is naturally the set  of $n$-tuples of \emph{distinct} points of $\projl{F}$. On the other 
hand, $\wn{F}=F^\times\setminus\sset{1}=\projl{F}\setminus\{ \infty, 0,1\}$
 and $Z_n$ consists of $n$-tuples of distinct points of $F^\times\setminus\sset{1}$.

The point 
$x\in F$ is identified with the point of $\projl{F}$ represented by $(x,1)$. Since $d((x,1),(y,1))=x-y$, the 
bijection $X_n/\gl{2}{F}\leftrightarrow Z_{n-3}$ is thus given by the formula
\[
(x_1,\ldots,x_n)\mapsto (\sset{x_1:x_2:x_3:x_4},\ldots,\sset{x_1:x_2:x_3:x_n})
\]  
where 
\[
\sset{x_1:x_2:x_3:x_4}=\frac{(x_1-x_4)(x_2-x_3)}{(x_1-x_2)(x_3-x_4)}
\]
is the classic cross ratio. 
\end{rem}

\begin{cor}\label{cor:zxn} 
For all $n\geq 3$ there are natural isomorphisms of $\sgr{A}$-modules
\[
\Z[X_n]_{\spl{2}{A}}\cong \sgr{A}[Z_{n-3}]
\]
and natural isomorphisms of $\Z$-modules
\[
\Z[X_n]_{\gl{2}{A}}\cong \Z[Z_{n-3}].
\]
\end{cor}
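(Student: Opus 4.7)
The plan is to deduce this corollary directly from the previous one by applying the free abelian group functor $\Z[-]$ to the set-level bijections already established, and then checking that the known equivariance properties upgrade the bijections to module isomorphisms.

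First, I recall that for any group $G$ acting on a set $T$ from the right, the natural map $\Z[T] \twoheadrightarrow \Z[T/G]$ identifies the coinvariants: $\Z[T]_G \cong \Z[T/G]$. Applying this to the action of $\spl{2}{A}$ on $X_n$ (resp.\ $\gl{2}{A}$ on $X_n$) yields
\[
\Z[X_n]_{\spl{2}{A}} \cong \Z[X_n/\spl{2}{A}], \qquad \Z[X_n]_{\gl{2}{A}} \cong \Z[X_n/\gl{2}{A}].
\]

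Next, by the preceding corollary, there are natural bijections $X_n/\spl{2}{A} \leftrightarrow \sq{A}\times Z_{n-3}$ and $X_n/\gl{2}{A}\leftrightarrow Z_{n-3}$. Taking free abelian groups gives
\[
\Z[X_n/\spl{2}{A}] \cong \Z[\sq{A}\times Z_{n-3}] \cong \Z[\sq{A}] \otimes_\Z \Z[Z_{n-3}] = \sgr{A}[Z_{n-3}]
\]
and $\Z[X_n/\gl{2}{A}]\cong \Z[Z_{n-3}]$, which already gives the underlying abelian group structures.

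Finally, I need to verify that the first isomorphism is $\sgr{A}$-linear. The action of $\sq{A}$ on $\Z[X_n]_{\spl{2}{A}}$ is induced from the action of $\gl{2}{A}$ on $X_n$ via $\gl{2}{A}/\spl{2}{A}\cong\unitr{A}\twoheadrightarrow \sq{A}$, while the action of $\sq{A}$ on $\sgr{A}[Z_{n-3}]$ is by left translation on the first factor. The earlier lemma about $\Phi_n$ shows that the map $X_n/\spl{2}{A}\to \sq{A}\times Z_{n-3}$ is a map of right $\sq{A}$-sets, where an element $X\in\gl{2}{A}$ acts by the square class $\an{\det X}$ on the first coordinate and trivially on the $Z_{n-3}$ coordinate (since the cross-ratios $y_i/y_j$ are unchanged). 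Consequently the induced isomorphism on free $\Z$-modules is $\sgr{A}$-linear. The $\gl{2}{A}$-coinvariant version is then obtained simply by passing further to $\sq{A}$-coinvariants on both sides, since $\Z[X_n]_{\gl{2}{A}} \cong (\Z[X_n]_{\spl{2}{A}})_{\sq{A}}$ and $(\sgr{A}[Z_{n-3}])_{\sq{A}}\cong \Z[Z_{n-3}]$.

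There is no real obstacle here: the work has already been done in the preceding lemmas establishing the equivariance of $\Phi_n$, and the argument is essentially a formal bookkeeping exercise converting set-theoretic bijections of $G$-sets into module isomorphisms of group ring modules.
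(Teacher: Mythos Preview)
Your proof is correct and follows essentially the same approach as the paper: use $\Z[T]_G\cong\Z[T/G]$ for a $G$-set $T$, then apply the preceding bijections $X_n/\spl{2}{A}\leftrightarrow \sq{A}\times Z_{n-3}$ and $X_n/\gl{2}{A}\leftrightarrow Z_{n-3}$. Your explicit verification of the $\sgr{A}$-module structure (via the equivariance of $\Phi_n$) is a welcome addition that the paper leaves implicit.
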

\begin{proof}
If $G$ is a group and if $X$ is a right $G$-set, then for any ring $R$ there is a natural isomorphism
\[
R[X]_G\cong R[X/G],\ \bar{x}\mapsto \bar{x}. 
\]
Thus, for $n\geq 3$,  
\[
\Z[X_n]_{\spl{2}{A}}\cong \Z[X_n/\spl{2}{A}]\cong \Z[\sq{A}\times Z_{n-3}]\cong \Z[\sq{A}][Z_{n-3}]=\sgr{A}[Z_{n-3}].
\]
\end{proof}

For $n\geq 1$, let $\delta_n: \Z[X_{n+1}]\to \Z[X_{n}]$ be the simplicial boundary map
\[
(\bar{\vect{u}}_1,\ldots,\bar{\vect{u}}_{n+1})\mapsto \sum_{i=1}^{n+1}(-1)^{i+1} (\bar{\vect{u}}_1,\ldots,
\widehat{\bar{\vect{u}}_i},\ldots,\bar{\vect{u}}_{n+1})
\]
and let $\ppb{A}:=\coker{\delta_4}$. Note that $(Z[X_n],\delta_n)$ is a complex of $\gl{2}{A}$-modules and that 
$\ppb{A}$ is thus also a $\gl{2}{A}$-module.
\begin{prop}\label{prop:rpb}
 For any commutative ring $A$, $\rpb{A}\cong \ppb{A}_{\spl{2}{A}}$ as $\sgr{A}$-modules, and 
$\pb{A}\cong \ppb{A}_{\gl{2}{A}}$ as $\Z$-modules.
\end{prop}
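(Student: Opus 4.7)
The plan is to use the identifications of $\Z[X_n]_{\spl{2}{A}}$ and $\Z[X_n]_{\gl{2}{A}}$ from Corollary \ref{cor:zxn} to reinterpret $\ppb{A}_{\spl{2}{A}}$ as the cokernel of a map $\sgr{A}[Z_2]\to \sgr{A}[\wn{A}]$, then identify the relations contributed by the image of $\delta_4$ with the defining relations $S_{x,y}$ of $\rpb{A}$.

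Since coinvariants are right exact, we have
\[
\ppb{A}_{\spl{2}{A}}=\coker{\Z[X_5]_{\spl{2}{A}}\stackrel{(\delta_4)_{\spl{2}{A}}}{\longrightarrow}\Z[X_4]_{\spl{2}{A}}}.
\]
By Corollary \ref{cor:zxn}, $\Z[X_4]_{\spl{2}{A}}\cong \sgr{A}[Z_1]=\sgr{A}[\wn{A}]$ and $\Z[X_5]_{\spl{2}{A}}\cong \sgr{A}[Z_2]$. Under the first bijection, the class of $(0_+,0_-,1_+,x_+)$ corresponds to $[x]$ for $x\in \wn{A}$, so the right-hand module already matches the free generating set of $\rpb{A}$. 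It therefore suffices to show that the image of $(\delta_4)_{\spl{2}{A}}$ is generated (as an $\sgr{A}$-module) by the expressions $S_{x,y}$ with $(x,y)\in Z_2$.

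The first step is to choose a canonical representative of each $\spl{2}{A}$-orbit on $X_5$. The proof of the proposition preceding Corollary \ref{cor:zxn} shows that every orbit meets $\bar{\Psi}_5(Z_2)$, so we may take the representative $(0_+,0_-,1_+,x_+,y_+)$ corresponding to $(x,y)\in Z_2$. Applying the simplicial boundary $\delta_4$ to this element yields the five terms
\[
(0_-,1_+,x_+,y_+)-(0_+,1_+,x_+,y_+)+(0_+,0_-,x_+,y_+)-(0_+,0_-,1_+,y_+)+(0_+,0_-,1_+,x_+).
\]
The explicit formula for the bijection $X_4/\spl{2}{A}\leftrightarrow \sq{A}\times \wn{A}$ recorded in the Remark following Corollary \ref{cor:zxn} (involving the ratios of determinants $d(\vect{u}_i,\vect{u}_j)$) then lets one read off the image of each term in $\sgr{A}[\wn{A}]$. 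A direct computation, using $d((0,1),(a,1))=-a$, $d((1,0),(a,1))=1$, $d((1,1),(a,1))=1-a$, produces in turn the elements $\an{1-x}\gpb{(1-x)/(1-y)}$, $-\an{x(1-x)}\gpb{(1-x^{-1})/(1-y^{-1})}$, $\an{x}\gpb{y/x}$, $-\gpb{y}$, $\gpb{x}$. Using $\an{x(1-x)}=\an{x^{-1}-1}$ (since $x^{-1}-1=(1-x)/x$ and $\an{x^2}=1$), one recognises this sum as exactly $S_{x,y}$.

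Hence the image of $(\delta_4)_{\spl{2}{A}}$ is the $\sgr{A}$-submodule of $\sgr{A}[\wn{A}]$ generated by the $S_{x,y}$, which is precisely the submodule of relations defining $\rpb{A}$; the first isomorphism follows. For the second statement, we apply the same argument with $\Z[X_n]_{\gl{2}{A}}\cong \Z[Z_{n-3}]$: taking further $\sq{A}$-coinvariants trivialises the $\an{\cdot}$ coefficients and collapses $S_{x,y}$ to the classical five-term relation $R_{x,y}$, giving $\ppb{A}_{\gl{2}{A}}\cong \pb{A}$. The main bookkeeping obstacle is the verification that the five determinant ratios produce exactly the five coefficients appearing in $S_{x,y}$ with the correct signs and square classes, but this is a mechanical check once the canonical representative is chosen.
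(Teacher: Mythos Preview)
Your proof is correct and follows essentially the same route as the paper's own argument: right exactness of coinvariants, the identifications $\Z[X_n]_{\spl{2}{A}}\cong\sgr{A}[Z_{n-3}]$ from Corollary~\ref{cor:zxn}, the canonical representative $(0_+,0_-,1_+,x_+,y_+)$, and the term-by-term evaluation of $\delta_4$ via the refined cross ratio formula. The paper records the resulting expression as $\an{1-z_1}(\tfrac{1-z_1}{1-z_2})-\an{z_1^{-1}-1}(\tfrac{1-z_1^{-1}}{1-z_2^{-1}})+\an{z_1}(\tfrac{z_2}{z_1})-(z_2)+(z_1)$, exactly your $S_{x,y}$; your treatment of the $\gl{2}{A}$-case by passing to $\sq{A}$-coinvariants is slightly more explicit than the paper's, but the argument is the same.
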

\begin{proof}
By right exactness of coinvariants, $\ppb{A}_{\spl{2}{A}}$ is naturally identified with the cokernel of the 
map $\bar{\delta}_4:\Z[X_5]_{\spl{2}{A}}\to\Z[X_4]_{\spl{2}{A}}$ of $\sgr{A}$-modules induced by $\delta_4$. Now 
\[
\Z[X_5]_{\spl{2}{A}}\cong \sgr{A}[Z_2]\mbox{ and } \Z[X_4]_{\spl{2}{A}}\cong \sgr{A}[Z_1],
\] 
and, under these identifications, the map $\bar{\delta}_4$ is described as follows: $(z_1,z_2)\in \sgr{A}[Z_2]$ 
corresponds to $(1,z_1,z_2)\in Y_3/(\unitr{A})^2$ and this in turn corresponds to the element
$(0_+,0_-,1_+,(z_1)_+,(z_2)_+)\in \Z[X_5]_{\spl{2}{A}}$. The image of this under $\bar{\delta}_4$ is 
\[
(0_-,1_+,(z_1)_+,(z_2)_+)-(0_+,1_+,(z_1)_+,(z_2)_+)+(0_+,0_-,(z_1)_+,(z_2)_+)-(0_+,0_-,1_+,(z_2)_+)+
(0_+,0_-,1_+,(z_1)_+,(z_2)_+)
\] 
 in $\Z[X_4]_{\spl{2}{A}}$. Recalling  that $(\bar{\vect{u}}_1,\ldots,\bar{\vect{u}}_4)\in \Z[X_4]_{\spl{2}{A}}$ corresponds 
to  
\[
\an{\frac{d(\vect{u}_1,\vect{u}_3)d(\vect{u}_1,\vect{u}_2)}{d(\vect{u}_2,\vect{u}_3)}}
\left( \frac{d(\vect{u}_1,\vect{u}_4)d(\vect{u}_2,\vect{u}_3)}{d(\vect{u}_2,\vect{u}_4)d(\vect{u}_1,\vect{u}_3)}\right)
\in \sgr{A}[Z_1]
\] 
and observing that $d(a_+,b_+)=a-b$ and $d(0_-,a_+)=1$ for all $a\not= b\in A$, we see that 
\[
\bar{\delta}_4(z_1,z_2)=\an{1-z_1}\left(\frac{1-z_1}{1-z_2}\right)-\an{z_1^{-1}-1}\left(\frac{1-z_1^{-1}}{1-z_2^{-1}}\right)
+\an{z_1}\left(\frac{z_2}{z_1}\right)-(z_2)+(z_1)\in \sgr{A}[Z_1].
\]
 Thus the map $\sgr{A}[Z_1]\to\rpb{A}$, $(z)\mapsto \gpb{z}$ induces an isomorphism 
\[
\coker{\bar{\delta}_4}\cong \rpb{A}.
\]
\end{proof}

\begin{rem} We will call the (composite) map
\[
\Z[X_4]\to \sgr{A}[Z_1]\to \rpb{A}, \ 
(\bar{\vect{u}}_1,\ldots,\bar{\vect{u}}_4)\mapsto 
\an{\frac{d(\vect{u}_1,\vect{u}_3)d(\vect{u}_1,\vect{u}_2)}{d(\vect{u}_2,\vect{u}_3)}}
\left( \frac{d(\vect{u}_1,\vect{u}_4)d(\vect{u}_2,\vect{u}_3)}{d(\vect{u}_2,\vect{u}_4)d(\vect{u}_1,\vect{u}_3)}\right)
\]
the \emph{refined cross ratio map}, and will denote it by $\rcr$. In the special case where
$\vect{u}_i=\iota_+(x_i)$ for $x_i\in A$, it takes the form 
\[
(x_1,x_2,x_3,x_4)\mapsto \an{\frac{(x_1-x_3)(x_1-x_2)}{x_2-x_3}}\gpb{\frac{(x_1-x_4)(x_2-x_3)}{(x_1-x_2)(x_3-x_4)}}.
\] 
\end{rem}
\subsection{The isomorphism $\ho{n}{T}{\Z}\cong\ho{n}{B}{\Z}$}
In order to prove Proposition \ref{prop:t2b} below, we follow the strategy of Suslin's proof of 
Theorem 1.8 in \cite{sus:homgln}.

\begin{lem}(\cite[Lemma 1.1]{sus:homgln} )\label{lem:susgln}
Suppose that $\phi_1,\ldots,\phi_m:k\to F$ are field embeddings such that for any $x\in k^\times$ we have
$\prod_{i=1}^m\phi_i(x)=1$. Then $k$ is a finite field of order $p^f$ with $m\geq (p-1)\cdot f$. 
\end{lem}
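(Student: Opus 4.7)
The plan is to establish the two parts of the conclusion in sequence: first that $k$ must be a finite field of some characteristic $p$, then the numerical bound $m \geq (p-1)f$.  We split the first task according to $\mathrm{char}(k)$ and the nature of $k$.

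Since each $\phi_i$ is a ring embedding, $\mathrm{char}(F) = \mathrm{char}(k)$.  If $\mathrm{char}(k) = 0$ then applying the hypothesis to $x = 2 \in k^\times$ yields $2^m = 1$ in a characteristic-zero field, absurd for $m \geq 1$.  So we may assume $\mathrm{char}(k) = p > 0$.  If $k$ is infinite and algebraic over $\F{p}$, then $k$ contains $\F{p^n}$ for arbitrarily large $n$, and applying the finite-field bound (proved below) to each $\F{p^n} \subseteq k$ forces $m \geq (p-1)n$ for every $n$, contradicting finiteness of $m$.  The remaining case is that $k$ contains an element $t$ transcendental over $\F{p}$.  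Here the key tool is the polynomial $P(T) := \prod_{i=1}^m(T - \phi_i(t)) \in F[T]$: applying the hypothesis to $t - a \in k^\times$ for each $a \in \F{p}$ gives $P(a) = (-1)^m$, so $P(T) - (-1)^m$ is a monic polynomial of degree $m$ vanishing on all of $\F{p}$, hence divisible by $T^p - T$; in particular $m \geq p$.  Iterating this analysis with $t$ replaced by $t^r - a$ for various $r \geq 1$, and in the critical low-$m$ cases reading off the Artin--Schreier structure of the orbit $\{\phi_i(t)\}$, produces constraints that eventually force each $\phi_i(t)$ to be algebraic over $\F{p}$, contradicting transcendence of $t$.

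Once $k = \F{p^f}$ is finite, every embedding $\phi_i \colon k \to F$ is a power of Frobenius, $x \mapsto x^{p^{a_i}}$ with $a_i \in \{0,1,\ldots,f-1\}$, so the hypothesis reads $x^{\sum_i p^{a_i}} = 1$ for all $x \in k^\times$, equivalently $(p^f - 1) \mid \sum_{i=1}^m p^{a_i}$.  This sum is positive, hence at least $p^f - 1$.  To deduce $m \geq (p-1)f$ we prove the auxiliary claim that \emph{every positive integer $N$ divisible by $p^f - 1$ has base-$p$ digit sum $s_p(N) \geq (p-1)f$}.  The base case $N = p^f - 1$ is immediate.  For the inductive step on $N$, write $N = \sum_{\ell \geq 0} E_\ell p^{f\ell}$ in base $p^f$ (so $0 \leq E_\ell < p^f$ and $s_p(N) = \sum_\ell s_p(E_\ell)$); then $\sum_\ell E_\ell \equiv N \equiv 0 \pmod{p^f - 1}$, and for $N \geq p^f$ one has $0 < \sum_\ell E_\ell < N$, so by the induction hypothesis and subadditivity of $s_p$ we obtain $s_p(N) \geq s_p\bigl(\sum_\ell E_\ell\bigr) \geq (p-1)f$.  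Finally $m \geq s_p\bigl(\sum_i p^{a_i}\bigr) \geq (p-1)f$, since consolidating the formal sum of powers of $p$ into its proper base-$p$ expansion only decreases the digit sum (each carry removes $p-1$).

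The principal obstacle is the transcendental subcase of the first part: the identities $P(a) = (-1)^m$ on $\F{p}$ alone do not force a contradiction when $m$ is large, so one must genuinely combine the additive and multiplicative structure of the $\phi_i$.  The cleanest route uses the Artin--Schreier rigidity of the orbit $\{\phi_i(t)\}$ together with iterated application of the hypothesis to polynomial expressions in $t$; extra care is needed in positive characteristic because the multiplicative characters $\chi_S := \prod_{i \in S}\phi_i$ may coincide on $k^\times$, so Artin's linear-independence theorem only applies after a careful accounting of such collisions.
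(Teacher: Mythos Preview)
The paper does not prove this lemma; it simply cites it from Suslin. So there is nothing to compare your approach against, and I will just assess the proposal on its own.

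Your reduction from characteristic zero, your reduction of the infinite-algebraic case to the finite case, and your digit-sum argument for the finite case are all correct and cleanly written. In particular, the inductive proof that every positive multiple of $p^f-1$ has base-$p$ digit sum at least $(p-1)f$, combined with $m \geq s_p\bigl(\sum_i p^{a_i}\bigr)$, is a nice way to finish.

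The genuine gap is the transcendental subcase. You show $m \geq p$ via $P(a) = (-1)^m$ on $\F{p}$, but then assert that ``iterating this analysis with $t$ replaced by $t^r-a$ \ldots\ eventually force[s] each $\phi_i(t)$ to be algebraic over $\F{p}$'' without exhibiting any such iteration. Your own closing paragraph concedes this is the ``principal obstacle'' and only names ingredients (``Artin--Schreier rigidity'', ``careful accounting of collisions'') rather than supplying an argument. No finite list of identities $\prod_i f(t_i)=1$ for $f\in\F{p}[T]$ is by itself inconsistent with the $t_i$ being transcendental, so something structurally new is needed beyond the polynomial $P$; as written, this case is asserted rather than proved.

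One concrete way to close the gap: set $t_i=\phi_i(t)$ and let $A=\F{p}[t_1,\ldots,t_m]\subset F$, a finitely generated $\F{p}$-domain. Choose a maximal ideal $\mathfrak m\subset A$ (so $A/\mathfrak m$ is finite) and a valuation $v$ on $\mathrm{Frac}(A)$ dominating $A_{\mathfrak m}$ with residue field algebraic over $\F{p}$ (e.g.\ by iterated discrete valuations along a Noether normalisation). Then every $t_i$ lies in the valuation ring with residue $\bar t_i\in\overline{\F{p}}$. Taking $f\in\F{p}[T]$ to be the minimal polynomial of $\bar t_1$ gives $v(f(t_1))>0$ while $v(f(t_i))\geq 0$ for all $i$, whence $\sum_i v(f(t_i))>0$; but $\prod_i f(t_i)=1$ forces this sum to vanish. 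This contradiction shows no transcendental $t$ can exist.
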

\begin{rem} This simple but useful result can be extended in many directions. For example:
(See \cite[Lemma 2.2.4]{knudson:book})
Let $A$ be a ring with many units. Let $B$ be any ring. For any $m\geq 1$, there do not exist  ring homomorphisms 
$\phi_1,\ldots,\phi_m:A\to B$ satisfying $\prod_{i=1}^m\phi_i(x)=1$ for all $x\in \unitr{A}$.

Local rings with infinite residue fields are rings with many units, but we will want to include the case of local rings 
with finite residue field below. 
\end{rem}
\begin{cor}\label{cor:susgln}
Suppose that $\phi_1,\ldots,\phi_m:k\to F$ are field embeddings such that for any $x\in k^\times$ we have
$\prod_{i=1}^m\phi_i(x^r)=1$. Then $k$ is a finite field of order $p^f$ and $mr = (p-1)\cdot t$ 
for some $t\geq f$. 
\end{cor}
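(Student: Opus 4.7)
The plan is to reduce Corollary \ref{cor:susgln} to Lemma \ref{lem:susgln} by a simple repetition trick. Specifically, I would form the enlarged list $\psi_1, \ldots, \psi_{mr}: k \to F$ obtained by listing each embedding $\phi_i$ exactly $r$ times. For any $x \in k^\times$,
$$\prod_{j=1}^{mr} \psi_j(x) = \prod_{i=1}^{m} \phi_i(x)^r = \prod_{i=1}^{m} \phi_i(x^r) = 1,$$
using the hypothesis together with the fact that each $\phi_i$ is a ring homomorphism. Lemma \ref{lem:susgln} then applies to the list $\psi_1, \ldots, \psi_{mr}$ and yields at once that $k$ is a finite field of order $p^f$ and that $mr \geq (p-1) f$.

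The remaining issue is upgrading the inequality $mr \geq (p-1)f$ to the stronger statement $mr = (p-1) t$ for some $t \geq f$; that is, showing that $p-1$ divides $mr$. For this I would use the explicit description of field embeddings of $\F{p^f}$ into a field of characteristic $p$: each $\phi_i$ is necessarily a power of the Frobenius, so $\phi_i(x) = x^{p^{a_i}}$ for some $a_i \in \{0, 1, \ldots, f-1\}$. The hypothesis $\prod_i \phi_i(x^r) = 1$ for all $x \in \F{p^f}^\times$ then becomes the divisibility
$$(p^f - 1) \;\big|\; r \sum_{i=1}^{m} p^{a_i}.$$
Reducing modulo $p-1$ and using $p \equiv 1 \pmod{p-1}$, this forces $(p-1) \mid rm$, which together with $mr \geq (p-1)f$ gives $mr = (p-1)t$ for some integer $t \geq f$.

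The main obstacle, such as it is, is noticing that divisibility of $mr$ by $p-1$ is not literally part of the conclusion of Lemma \ref{lem:susgln} as stated in the text, so a short separate argument exploiting the Frobenius description of embeddings of $\F{p^f}$ is needed to secure the full conclusion; the reduction via repetition itself is entirely formal.
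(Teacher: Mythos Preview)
Your argument is correct. The repetition trick in the first paragraph is exactly the paper's approach and yields both the finiteness of $k$ and the inequality $mr\geq (p-1)f$.

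For the divisibility $(p-1)\mid mr$, your route via the Frobenius description of the embeddings is valid but heavier than necessary. The paper instead observes that any embedding $\phi_i:k\to F$ restricts to the identity on the prime field $\F{p}\subset k$. Choosing a primitive root $a\in\F{p}^\times$, one has $\phi_i(a)=a$ for every $i$, so the hypothesis gives $1=\prod_i\phi_i(a^r)=a^{mr}$, and hence $(p-1)\mid mr$. This avoids invoking the structure of $\mathrm{Gal}(\F{p^f}/\F{p})$ altogether. Your argument, by contrast, needs the (easy but nontrivial) fact that all embeddings of $\F{p^f}$ into $F$ land in a single copy of $\F{p^f}$ and differ by Frobenius powers; it works, but the paper's one-line computation with a primitive root is more economical.
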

\begin{proof}
We have $1= \prod_{i=1}^m\phi_i(x^r)=\prod_{i=1}^m\phi_i(x)^r:=\prod_{i=1}^{mr}\psi_i(x)$, and thus $k$ is finite of 
characteristic $p>0$ and $mr\geq (p-1)f$ by Lemma \ref{lem:susgln}. 

On the other hand, if $a\in \F{p}\subset k$ is 
a primitive root modulo $p$, then $1=\prod_i \phi_i(a^r)=a^{mr}$ and thus $p-1| mr$.    
\end{proof}


\begin{cor}\label{cor:many}
Let $A$ be a local ring with maximal ideal $\mathcal{M}$ and residue field $k$. Suppose that $r\geq 1$ and 
$\phi_1,\ldots,\phi_m:A\to F$ are homomorphisms from $A$ to the field $F$ satisfying 
$\prod_{i=1}^m\phi_i(u^r)=1$ for all $u\in\unitr{A}$. Then $k$ is a finite field with $p^f$ elements and 
$mr=(p-1)t$ where $t\geq f$.  In particular, $(p-1)f\leq mr$.
\end{cor}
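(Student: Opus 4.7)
The strategy is to reduce Corollary \ref{cor:many} directly to Corollary \ref{cor:susgln} applied to the residue field $k$. The central claim I would aim to prove is that each ring homomorphism $\phi_i : A \to F$ must kill the maximal ideal $\mathcal{M}$, so that it factors uniquely as $\phi_i = \bar{\phi}_i \circ \pi$, where $\pi : A \twoheadrightarrow k$ is the residue map and $\bar{\phi}_i : k \to F$ is a field embedding.

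To prove the factorization I would argue by contradiction. Suppose $\phi_{i_0}(\mu) \neq 0$ for some index $i_0$ and some $\mu \in \mathcal{M}$. For every $a \in A$ the element $1+a\mu$ lies in $1+\mathcal{M}\subseteq \unitr{A}$, and the hypothesis gives
\[
\prod_{i=1}^{m}\bigl(1+\phi_i(a)\,\phi_i(\mu)\bigr)^{r}=1 \quad \text{in } F^{\times}.
\]
Hence $\Psi(a):=\prod_i\bigl(1+\phi_i(a)\phi_i(\mu)\bigr)$ takes values in the finite group of $r$-th roots of unity in $F^\times$ as $a$ ranges over $A$. Since by assumption $\phi_{i_0}$ does not factor through $k$, the image of $\phi_{i_0}$ is infinite, so the coordinates $\phi_i(a)$ run over an infinite set; a pigeonhole/polynomial-identity argument then forces $\Psi$ to be identically equal to $1$, which in turn forces each $\phi_i(\mu)=0$, contradicting the choice of $\mu$. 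This is a variant of the ``many units'' technique alluded to in the remark preceding the corollary.

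With the factorization in hand, the surjectivity of $\unitr{A}\twoheadrightarrow k^{\times}$ lets us lift any $\bar{u}\in k^{\times}$ to some $u\in\unitr{A}$, and the hypothesis immediately descends to
\[
\prod_{i=1}^{m}\bar{\phi}_i(\bar{u}^{\,r})=\prod_{i=1}^{m}\phi_i(u^{r})=1 \quad \text{for all } \bar{u}\in k^{\times}.
\]
Applying Corollary \ref{cor:susgln} to the family $\bar{\phi}_1,\ldots,\bar{\phi}_m : k \to F$ then yields at once that $k$ is a finite field of order $p^{f}$ with $mr=(p-1)t$ for some $t\geq f$, which is the desired conclusion. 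The main obstacle is the factorization step: ensuring that the finiteness argument for $\Psi$ goes through uniformly in all characteristics and residue-field sizes, and without assuming that $A$ itself contains a field of representatives for $k$.
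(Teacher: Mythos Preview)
Your overall strategy—show each $\phi_i$ kills $\mathcal{M}$ and then invoke Corollary~\ref{cor:susgln}—is exactly the paper's. The gap is in the factorization step, where the ``pigeonhole/polynomial-identity'' argument is not justified. Only $\phi_{i_0}(a)$ is known to range over an infinite set; the remaining coordinates $\phi_j(a)$ are determined by the same $a$ and hence correlated with $\phi_{i_0}(a)$, so the image of $a\mapsto(\phi_1(a),\ldots,\phi_m(a))$ need not be Zariski-dense in $F^m$, and the relation $\Psi(a)^r=1$ does not automatically promote to an identity of polynomials in independent variables. Even granting $\Psi\equiv 1$, the further claim that this forces every $\phi_i(\mu)=0$ needs its own argument. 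The reference to the ``many units'' technique is also off target: that method requires an infinite residue field, whereas the conclusion here is precisely that $k$ is finite.

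The paper's route is as follows. One first shows $\mathrm{char}\,F=p>0$ by choosing an integer $n>1$ in $\unitr{A}$ and reading off $n^{mr}=1$ in $F$; one then passes to $A/pA$, so that $A$ becomes an $\F{p}$-algebra. For a fixed $x\in\mathcal{M}$, instead of using only the units $1+ax$, the paper uses the much richer family $f(x)$ for \emph{all} $f\in\F{p}[T]$ with $f(0)\neq 0$, obtaining $\prod_i f(\phi_i(x))^r=1$ for every such $f$. These relations cut out an affine variety $V$ over $\F{p}$ containing the point $(\phi_1(x),\ldots,\phi_m(x))$; one checks directly that $V(\bF{p})\subset\{0\}$ (given any algebraic point with a nonzero coordinate $a_j$, pick $f$ with $f(0)\neq 0$ and $f(a_j)=0$), and the Nullstellensatz then forces $V(K)\subset\{0\}$ for every field $K$, whence $\phi_i(x)=0$ for all $i$.
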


\begin{proof} $F$ must have positive characteristic, for otherwise we can choose $1<n\in\unitr{A}\cap \Z$, and the 
hypothesis gives $n^{mr}=1$ in $F$. 

Let $\mathrm{char}(F)=p>0$. Replacing $A$ by $A/pA$ if necessary, we can assume that $A$ is an $\F{p}$-algebra. We 
complete the proof by showing that $\mathcal{M}\subset \ker{\phi_i}$ for all $i$ (and hence that the $\phi_i$ factor 
through $k$):

Let $x\in \mathcal{M}$. For $i=1,\ldots, m$, let $x_i=\phi_i(x)\in F$. If $f(T)\in \F{p}[T]$ satisfies
 $f(0)\not=0$, then $f(x)\in\unitr{A}$. In this case we have 
\[
1=\prod_{i=1}^m\phi_i(f(x))^r=\prod_{i=1}^mf(x_i)^r.
\]

Thus, let $I$ be the ideal of $\F{p}[T_1,\ldots, T_m]$ generated by the set
\[
\{ \left(\prod_{i=1}^mf(T_i)^r\right) -1  \ |\ f(T)\in \F{p}[T]\mbox{ with }f(0)\not= 0\}.
\]
 Let $V$ be the corresponding variety.  Then $(x_1,\ldots,x_m)\in V(F)$.

We observe that $(0,\ldots,0)\in V$ if and only if $p-1| mr$. 

On the other hand, suppose that $(a_1,\ldots,a_m)\in \bF{p}^m$ is algebraic and that $a_j\not=0$ for some $j$. Then there 
exists $f(T)\in\F{p}[T]$ with $f(0)\not=0$ and $f(a_j)=0$. It follows that $\prod_if(a_i)^r=0$ and hence 
$(a_1,\ldots,a_m)\not\in V(\bF{p})$.  Thus 
\[
V(\bF{p})=\left\{
\begin{array}{ll}
\{ 0\}, & p-1|mr\\
\emptyset,& \mbox{ otherwise.}
\end{array}
\right.
\]  
It follows from the Nullstellensatz that the ideal, $J$, of $V$ in $\bF{p}[T_1,\ldots, T_m]$ is given by 
\[
J= \left\{
\begin{array}{ll}
\an{ T_1,\ldots, T_m}, & p-1|mr\\
\bF{p}[T_1,\ldots,T_m],& \mbox{ otherwise.}
\end{array}
\right.
\]
and hence, for \emph{any} field $K$ we have 
\[
V(K)=\left\{
\begin{array}{ll}
\{ 0\}, & p-1|mr\\
\emptyset,& \mbox{ otherwise.}
\end{array}
\right.
\]

Since $(x_1,\ldots,x_m)\in V(F)$, it follows that $p-1|mr$ and $x_i=\phi_i(x)=0$ for all $i$.
\end{proof}

For $r\geq 1$, we denote by $A(r)$ the $\Z[\unitr{A}]$-module obtained by 
making $u\in\unitr{A}$ act on $A$ as multiplication by $u^r$.  

\begin{lem}\label{lem:sus2} 
Let $m, r\geq 1$. Let $n_1,\ldots, n_k$ satisfy $n_1+\cdots n_k=m$ and $n_i\geq 1$.
Let $A$ be a local ring with residue field $k$. If $k$ is finite of order $p^f$ we suppose that 
$mr<(p-1)f$.

 Let $T^n(A(r))$ denote 
either $\Extpow{n}{\Z}{A(r)}$ or $\sym{n}{\Z}{A(r)}$, considered as $\unitr{A}$ modules with the diagonal action. 

Then
\[
\ho{i}{\unitr{A}}{T^{n_1}(A(r))\otimes \cdots \otimes T^{n_k}(A(r))}=0
\] 
for all $i\geq 0$. 
\end{lem}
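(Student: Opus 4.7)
I would use the classical ``center kills'' argument in combination with Corollary~\ref{cor:many}. Let $G=\unitr{A}$ and set $M:=T^{n_1}(A(r))\otimes_\Z\cdots\otimes_\Z T^{n_k}(A(r))$. The key observation is that each factor $T^{n_j}(A(r))$ carries a natural structure of $A$-module: in both the symmetric and alternating cases, $\Z$-multilinearity makes ``multiplication in a single tensor slot'' a well-defined (and slot-independent) scalar $A$-action. In this structure, the diagonal action of $u\in G$ on $T^{n_j}(A(r))$ is scalar multiplication by $u^{n_j r}$, since $u$ multiplies each of the $n_j$ tensor slots by $u^r$ and the $n_j$ resulting factors of $u^r$ pull out as a single scalar. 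This $A$-action commutes with the $G$-action because $A$ is commutative.

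Taking the external $\Z$-tensor product over the $k$ indices endows $M$ with the structure of a module over the commutative ring $R:=A^{\otimes_\Z k}$, and the action of $u\in G$ on $M$ coincides with multiplication by the element $\theta(u):=u^{n_1 r}\otimes\cdots\otimes u^{n_k r}\in R$. Moreover $R$ acts on $M$ by $G$-equivariant endomorphisms, so $\ho{i}{G}{M}$ inherits a natural $R$-module structure. Since $G=\unitr{A}$ is abelian, every element is central, and the standard ``center kills'' principle says that the endomorphism of $\ho{i}{G}{M}$ induced by $\theta(u)$ is the identity; equivalently, $\theta(u)-1\in R$ annihilates $\ho{i}{G}{M}$. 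Let $J\subseteq R$ denote the ideal generated by all $\theta(u)-1$ as $u$ varies over $\unitr{A}$.

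It suffices to show $J=R$. Suppose otherwise; then $J$ is contained in a maximal ideal $\mathcal{N}\subset R$ with quotient field $F:=R/\mathcal{N}$. Composing the canonical inclusions $\iota_j:A\hookrightarrow R$ into the $j$-th factor with the projection to $F$ gives ring homomorphisms $\phi_j:A\to F$ (for $j=1,\dots,k$) satisfying
\[
\prod_{j=1}^k\phi_j(u)^{n_j r}=\prod_{j=1}^k\phi_j(u^{n_j r})=1\quad\text{for every } u\in\unitr{A}.
\]
Listing each $\phi_j$ a total of $n_j$ times produces $m$ ring homomorphisms $\tilde\phi_1,\dots,\tilde\phi_m:A\to F$ satisfying $\prod_{i=1}^m\tilde\phi_i(u^r)=1$ for every $u\in\unitr{A}$. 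Corollary~\ref{cor:many} then forces the residue field of $A$ to be finite of order $p^f$ with $(p-1)f\leq mr$, contradicting the hypothesis of the lemma. Hence $J=R$ and $\ho{i}{\unitr{A}}{M}=0$ for all $i\geq 0$.

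The chief subtlety lies in the first paragraph: verifying that the slot-wise $A$-actions really assemble into a genuine $G$-equivariant $R$-module structure on $M$ in which $u$ acts as the prescribed element of $R$. This is precisely where the symmetric/alternating hypothesis on $T^{n_j}$ is needed, so that scalars can be moved freely between tensor slots without introducing spurious signs or extra relations. The application of Corollary~\ref{cor:many} via the trick of repeating the $\phi_j$'s according to the multiplicities $n_j$ is the device which converts the $k$-fold structure on $R$ into the $m$-fold bound in the hypothesis.
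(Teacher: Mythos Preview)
Your overall strategy---center kills, then show via Corollary~\ref{cor:many} that the ideal generated by the elements $\theta(u)-1$ is the unit ideal---is exactly the approach the paper invokes by citing Suslin's argument. However, your implementation has a genuine gap in the exterior-power case.

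You assert that $\wedge^{n}_{\Z} A$ carries an $A$-module structure via ``multiplication in a single tensor slot'' which is slot-independent and for which the diagonal $u$-action equals scalar multiplication by $u^{nr}$. Both claims fail. The single-slot action does not even descend from $A^{\otimes n}$ to $\wedge^n_{\Z} A$: if $x_1=x_2$, then $ax_1\otimes x_2\otimes\cdots$ has first two entries $ax_1,\,x_1$, which need not coincide, so it does not lie in the defining ideal of $\wedge^n$. And slot-independence is false outright: in $\wedge^2_{\Z}\,\F_p[t]$ one has $(t\cdot 1)\wedge t=t\wedge t=0$ while $1\wedge (t\cdot t)=1\wedge t^2\neq 0$. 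Thus the identity $u^r x_1\wedge\cdots\wedge u^r x_n=u^{nr}(x_1\wedge\cdots\wedge x_n)$ on which your argument rests is unavailable, so $R=A^{\otimes k}$ does not act on $M$ in the way you describe. (Your argument is fine for symmetric powers, where slot-independence does hold.)

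The fix---which is what Suslin's argument actually uses---is to replace $A^{\otimes k}$ by the subring of \emph{symmetric} tensors. Set $S:=\bigotimes_{j=1}^k (A^{\otimes n_j})^{S_{n_j}}\subset A^{\otimes m}$. An $S_{n_j}$-invariant tensor commutes with the $S_{n_j}$-action on $A^{\otimes n_j}$, and one checks that it therefore preserves the defining ideals of both $\wedge^{n_j}$ and $\mathrm{Sym}^{n_j}$; hence $S$ acts on $M$, and the diagonal action of $u$ is multiplication by $\theta(u)=(u^r)^{\otimes m}\in S$. If the ideal $\langle\theta(u)-1:u\in\unitr{A}\rangle$ were proper in $S$, choose a field quotient $S\to F$. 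Since $A^{\otimes m}$ is integral over $S=(A^{\otimes m})^{S_{n_1}\times\cdots\times S_{n_k}}$ (each $a$ satisfies the monic polynomial $\prod_\sigma(X-\sigma(a))$), this extends to a homomorphism $A^{\otimes m}\to\bar F$, which yields $m$ ring homomorphisms $\phi_1,\ldots,\phi_m:A\to\bar F$ with $\prod_i\phi_i(u^r)=1$ for all $u\in\unitr{A}$. Corollary~\ref{cor:many} then gives the desired contradiction. In effect, your device of ``repeating each $\phi_j$ with multiplicity $n_j$'' is replaced by this integrality step, which produces the $m$ homomorphisms honestly from the $m$ tensor factors of $A^{\otimes m}$.
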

\begin{proof}
This follows from Corollary \ref{cor:many} by the same argument verbatim as that by which Suslin proves 
Corollary 1.6 from Lemma 1.1 in \cite{sus:homgln}. 
\end{proof}

\begin{lem} \label{lem:sus}
Let $m, r\geq 1$.
Let $A$ be a local integral domain with residue field $k$. If $k$ is finite of order $p^f$ we suppose that 
$mr<(p-1)f$.

For all $i\geq 0$ we have 
\[
\ho{i}{\unitr{A}}{\ho{m}{A(r)}{\Z}}=0.
\] 
\end{lem}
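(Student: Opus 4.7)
The plan is to follow Suslin's proof of the analogous statement over fields in \cite[Corollary 1.6]{sus:homgln}, substituting our Corollary \ref{cor:many} for Suslin's \cite[Lemma 1.1]{sus:homgln} wherever the latter is invoked. The central idea is to exhibit a natural finite filtration on $\ho{m}{A(r)}{\Z}$ whose associated graded pieces are of exactly the form handled by Lemma \ref{lem:sus2}, and then to apply that lemma piece by piece, propagating the resulting vanishings through the long exact sequences in $\unitr{A}$-homology.

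First I would dispose of the torsion-free case. When $\mathrm{char}(A)=0$, the integral domain $A$ is torsion-free as an abelian group, so $A(r)$ is a filtered colimit of finitely generated free abelian groups. Since group homology commutes with filtered colimits and $\ho{m}{\Z^n}{\Z}\cong\Extpow{m}{\Z}{\Z^n}$, we obtain $\ho{m}{A(r)}{\Z}\cong\Extpow{m}{\Z}{A(r)}$ as $\unitr{A}$-modules. Lemma \ref{lem:sus2} with $k=1$ and $n_1=m$ then gives the desired vanishing immediately, and no further work is needed in this case.

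For the general case (which is only really needed when $A$ has positive characteristic, so that $A(r)$ has torsion) I would invoke the classical Eilenberg--Mac Lane computation of the integral homology of an abelian group: for any abelian group $M$ there is a natural finite filtration on $\ho{m}{M}{\Z}$ whose associated graded is a direct sum of tensor products $T^{n_1}(M)\otimes\cdots\otimes T^{n_k}(M)$, with each $T^{n_i}$ an exterior or a symmetric power of $M$ and with $n_i\geq 1$ and $n_1+\cdots+n_k\leq m$. Applied to $M=A(r)$, and observing that the hypothesis $mr<(p-1)f$ forces $m'r<(p-1)f$ for every $m'\leq m$, Lemma \ref{lem:sus2} gives $\ho{i}{\unitr{A}}{T^{n_1}(A(r))\otimes\cdots\otimes T^{n_k}(A(r))}=0$ for each subquotient of the filtration and every $i\geq 0$. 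Chaining these vanishings through the long exact sequences attached to the filtration (equivalently, through the obvious first-quadrant spectral sequence) yields $\ho{i}{\unitr{A}}{\ho{m}{A(r)}{\Z}}=0$ for all $i\geq 0$, as required. The main obstacle is supplying the structural input about the filtration of $\ho{m}{A(r)}{\Z}$ in positive characteristic; once that is granted, the remainder of the argument is a routine spectral-sequence bookkeeping exactly parallel to Suslin's treatment of the field case.
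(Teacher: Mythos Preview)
Your characteristic-zero argument is identical to the paper's. The divergence is in positive characteristic, and there your proposal has a gap.

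You assert that ``for any abelian group $M$ there is a natural finite filtration on $\ho{m}{M}{\Z}$ whose associated graded is a direct sum of tensor products $T^{n_1}(M)\otimes\cdots\otimes T^{n_k}(M)$'' with each $T^{n_i}$ an exterior or symmetric power. This is not true for arbitrary abelian groups: the Eilenberg--Mac Lane description of integral homology involves derived functors (divided powers, $\mathrm{Tor}$ terms, etc.), not merely $\Lambda$ and $\mathrm{Sym}$. You yourself flag ``supplying the structural input about the filtration'' as the main obstacle, but you do not supply it, and the blanket claim as stated is false.

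The paper sidesteps this entirely. In positive characteristic $A$ is an $\F{p}$-algebra, and the paper works first with $\F{p}$-coefficients, where the decomposition of $\ho{m}{A(r)}{\F{p}}$ into pieces of the form $\Extpow{s}{\Z}{A(r)}\otimes\sym{t}{\Z}{A(r)}$ with $s+t\leq m$ \emph{is} classical (K\"unneth plus the well-known $\ho{*}{\Z/p}{\F{p}}$). Lemma~\ref{lem:sus2} then kills $\ho{i}{\unitr{A}}{\ho{m}{A(r)}{\F{p}}}$. To pass back to $\Z$-coefficients the paper uses the short exact sequence $0\to\Z\to\Z\to\F{p}\to 0$, which yields short exact sequences
\[
0\to \ho{k}{A(r)}{\Z}\to\ho{k}{A(r)}{\F{p}}\to\ho{k-1}{A(r)}{\Z}\to 0\qquad(k\geq 2)
\]
and an isomorphism in degree $1$; a straightforward induction on $m$ then gives the vanishing over $\Z$. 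This is both more elementary and fully justified, whereas your route would require either restricting your structural claim to $\F{p}$-vector spaces and proving it, or replacing it with exactly the bootstrapping argument the paper uses.
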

\begin{proof}
If $\mathrm{char}(A)=0$, then $\ho{m}{A(r)}{\Z}=\Extpow{m}{\Z}{A(r)}$ and the statement follows at once from
Lemma \ref{lem:sus2}.

Otherwise $A$ is an $\F{p}$-algebra for some $p>0$. Then $\ho{m}{A(r)}{\F{p}}$ is a direct sum of modules of the 
form $\Extpow{s}{\Z}{A(r)}\otimes \sym{t}{\Z}{A(r)}$ with $s+t\leq m$. It follows from Lemma \ref{lem:sus2} that 
$\ho{i}{\unitr{A}}{\ho{m}{A(r)}{\F{p}}}=0$. 

On the other hand, the short exact sequence 
\[
\xymatrix{
0\ar[r]
& \Z\ar^-{p}[r]
& \Z\ar[r]
&\Z/p\Z=\F{p}
\ar[r]
& 0
}
\]
induces a long exact homology sequence for $\ho{\bullet}{A(r)}{}$, which decomposes into short exact sequences 
\[
0\to \ho{k}{A(r)}{\Z}\to \ho{k}{A(r)}{\F{p}}\to \ho{k-1}{A(r)}{\Z}\to 0 \mbox{ ($k\geq 2$)}
\]  
and an isomorphism
\[
\ho{1}{A(r)}{\Z}\cong A(r)\cong \ho{1}{A(r)}{\F{p}}.
\]

The vanishing of $\ho{i}{\unitr{A}}{\ho{m}{A(r)}{\Z}}$ for all $i\geq 0$ 
then follows from a straightforward induction on $m$.
\end{proof}

We let $T=T(A)$ denote the subgroup of $\spl{2}{A}$ consisting of diagonal matrices:
\[
T(A):=\left\{ \matr{u}{0}{0}{u^{-1}}\ |\ u\in\unitr{A}\right\}.
\]
Thus $T(A)\cong \unitr{A}$.  We let $B=B(A)$ denote the subgroup consisting of lower triangular matrices:
 \[
B(A):=\left\{ \matr{u}{a}{0}{u^{-1}}\ |\ u\in\unitr{A}, a\in A \right\}.
\]
Thus there is natural (split) group extension 
\begin{eqnarray}\label{eqn:ubt}
1\to V\to B\to T\to 1
\end{eqnarray}
where 
\[
V=T(A):=\left\{ \matr{1}{a}{0}{1}\ |\ a\in A \right\}\cong A.
\]

Here $T\cong \unitr{A}$ acts on $V\cong A$ by conjugation. With the given identifications, $u\in \unitr{A}$ acts on 
$a\in A$ as multiplication by $u^2$. Thus $V\cong A(2)$ as a $\Z[\unitr{A}]$-module.

\begin{prop}\label{prop:t2b} 
Let $n \geq 1$.
Let $A$ be a local integral domain with residue field $k$. If $k$ is finite of order $p^f$ we suppose that 
$(p-1)f>2n$.

The natural maps $B\to T$ and $T\to B$  induce isomorphisms on homology 
\[
\ho{n}{T}{\Z}\cong \ho{n}{B}{\Z}
\]
\end{prop}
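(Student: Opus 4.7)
The plan is to apply the Hochschild--Serre spectral sequence to the split group extension (\ref{eqn:ubt}),
\[
1 \to V \to B \to T \to 1,
\]
where $V \cong A(2)$ as a $\Z[\unitr{A}]$-module (the twist by $2$ comes from the fact that conjugation by $\mathrm{diag}(u,u^{-1})$ scales the upper-right entry by $u^2$). This gives a first-quadrant spectral sequence
\[
E^2_{p,q} = \ho{p}{T}{\ho{q}{V}{\Z}} \Longrightarrow \ho{p+q}{B}{\Z}.
\]

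The key step is to establish that $E^2_{p,q} = 0$ whenever $q \geq 1$ and $p+q \leq n$. Under the identification $T \cong \unitr{A}$, these $E^2$-terms are precisely the groups $\ho{p}{\unitr{A}}{\ho{q}{A(2)}{\Z}}$, and their vanishing follows directly from Lemma \ref{lem:sus} applied with $r=2$ and $m=q$. The numerical hypothesis there reads $2q < (p-1)f$ in the finite residue field case, and since $1 \leq q \leq n$, this is exactly covered by the standing assumption $(p-1)f > 2n$. If the residue field is infinite, Lemma \ref{lem:sus} applies unconditionally.

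With this vanishing, the spectral sequence in total degree $n$ is concentrated on the bottom row, so the edge map yields
\[
\ho{n}{B}{\Z} \cong E^\infty_{n,0} = E^2_{n,0} = \ho{n}{T}{\Z}.
\]
To confirm the isomorphism is induced by the natural maps $T \to B$ and $B \to T$: the projection $B \twoheadrightarrow T$ realizes the edge homomorphism, while the splitting $T \hookrightarrow B$ provides a one-sided inverse; together these force both composites to be the identity on $H_n$.

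No step poses a genuine obstacle beyond careful bookkeeping: the substantive vanishing input is already packaged in Lemma \ref{lem:sus}, and the delicate point is merely checking that the hypothesis $(p-1)f > 2n$ is the \emph{sharp} condition needed to cover every bidegree with $q \leq n$. The factor of $2$ in the hypothesis is, unsurprisingly, the same factor of $2$ encoded in the isomorphism $V \cong A(2)$.
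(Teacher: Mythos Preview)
Your proof is correct and follows essentially the same route as the paper: both apply the Hochschild--Serre spectral sequence for the extension $1\to V\to B\to T\to 1$ and invoke Lemma \ref{lem:sus} with $r=2$ to kill the terms with $q\geq 1$. The paper states the slightly stronger vanishing $E^2_{i,j}=0$ for all $i$ and $0<j\leq n$ (which Lemma \ref{lem:sus} actually gives, since it yields vanishing for \emph{all} $i\geq 0$), whereas you only record the range $p+q\leq n$ that you need; your additional remark identifying the edge map with the natural maps $T\leftrightarrow B$ is a welcome clarification the paper omits.
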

\begin{proof}
The Hochschild-Serre spectral sequence associated to the extension (\ref{eqn:ubt}) takes the form
\[
E^2_{i,j}=\ho{i}{T}{\ho{j}{V}{\Z}}=\ho{i}{\unitr{A}}{\ho{j}{A(2)}{\Z}}\Rightarrow \ho{i+j}{B}{Z}.
\]

By Lemma \ref{lem:sus}, it follows that $E^2_{i,j}=0$ if $0<j\leq n$,  
and $E^2_{i,0}=\ho{i}{T}{\Z}$ for all $i$.

Hence $\ho{n}{B}{\Z}=\ho{n}{T}{\Z}$.
\end{proof}
\begin{rem} \label{rem:suff}
In particular,  $\ho{n}{T}{\Z}\cong\ho{n}{B}{\Z}$ for all $n\leq 3$ provided \\ 
\[
\card{k}\not\in \{ p^f\ |\ p \mbox{ prime and } (p-1)f\leq 6\}=\{2,3,4,5,7,8,9,16,27,32, 64 \}. 
\]

In the remainder of the paper we will say that a field $k$ is \emph{sufficiently large} 
if either $k$ is infinite or if $\card{k}=p^f$ where $(p-1)f>6$. 
\end{rem}
\subsection{The complex $L_\bullet$}  For a commutative ring $A$, we let $L_n=L_n(A):=\Z[X_{n+1}]$. Equipped with the 
boundary $\delta_n:L_n\to L_{n-1}$ this yields a complex, $L_\bullet$, 
 of right $\gl{2}{A}$-modules. Restricting the group action, 
this is also a complex of $\spl{2}{A}$-modules.

We now restrict attention to the case where $A$ is a commutative local ring with residue field $k$. We let 
$\pi:A\to k$ denote the canonical surjective quotient map. So $\unitr{A}= \pi^{-1}(k^\times)$. More generally, if 
$X\in M_n(A)$ is an 
$n\times n$ matrix with coefficients in $A$, we let $\pi(X)\in M_n(k)$ denote the matrix obtained by applying $\pi$ to 
each entry of $X$. Since $A$ is a local ring $X\in \gl{n}{A}$ if and only if $\pi(X)\in \gl{n}{k}$.  
 
Similarly, $\vect{u}=(u_1,u_2)\in U_2(A)$ if and only if $\pi(\vect{u})\in U_2(k)=k^2\setminus\{ 0\}$ and 
$\bar{\vect{u}}\in X_1(A)$ if and only if $\overline{\pi(\vect{u})}\in X_1(k)=\projl{k}$. Furthermore,
\[
(\vect{u}_1,\ldots,\vect{u}_n)\in \genu{n}{A}\ \iff\ 
(\pi(\vect{u}_1),\ldots,\pi(\vect{u}_n))\in \genu{n}{k}
\]
and hence 
\[
(\bar{\vect{u}}_1,\ldots,\bar{\vect{u}}_n)\in X_n(A)\ \iff\ 
(\overline{\pi(\vect{u}_1)},\ldots,\overline{\pi(\vect{u}_n)})\in X_n(k).
\]

\begin{lem} 
 $H_n(L_\bullet)=0$ for $1\leq n< \card{k}$.
\end{lem}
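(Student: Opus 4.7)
The plan is to identify $L_\bullet$ with the chain complex of a simplicial join of the fibers of $\pi\colon X_1(A)\to \projl{k}$ and then invoke the standard connectivity estimate for joins.

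First, I would view $L_\bullet$ as the ordered simplicial chain complex of the abstract simplicial complex $\Delta$ with vertex set $X_1(A)$ whose simplices are the finite subsets of $X_1(A)$ that are pairwise in general position. By the reduction criterion recorded just before the lemma (a tuple $(\bar{\vect{u}}_1,\ldots,\bar{\vect{u}}_n)$ lies in $X_n(A)$ iff its image lies in $X_n(k)$, where $X_n(k)$ is the set of $n$-tuples of distinct points in $\projl{k}$), pairwise general position translates exactly into having pairwise distinct images in $\projl{k}$. Writing $F_\alpha := \pi^{-1}(\alpha)\subseteq X_1(A)$ for $\alpha\in\projl{k}$, this means the simplices of $\Delta$ are precisely the finite transversals of the fiber decomposition $X_1(A)=\bigsqcup_{\alpha\in\projl{k}}F_\alpha$, so $\Delta$ is nothing other than the combinatorial simplicial join of the discrete fibers $F_\alpha$.

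Second, I would invoke the standard inequality $\mathrm{conn}(X\ast Y)\geq \mathrm{conn}(X)+\mathrm{conn}(Y)+2$: iterated, and using that a nonempty discrete space has connectivity at least $-1$, it shows that the simplicial join of $m$ nonempty discrete sets is $(m-2)$-connected. When $k$ is finite, $\card{\projl{k}}=\card{k}+1$, so $\Delta$ is $(\card{k}-1)$-connected and $\tilde H_n(\Delta)=0$ for $n\leq \card{k}-1$. When $k$ is infinite I would write $\Delta=\colim_S \Delta^S$ over finite subsets $S\subseteq \projl{k}$, with $\Delta^S:=\ast_{\alpha\in S}F_\alpha$; since reduced homology commutes with filtered colimits and the connectivity of $\Delta^S$ grows without bound as $\card{S}\to\infty$, one obtains $\tilde H_n(\Delta)=0$ for every $n\geq 0$.

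Finally, since $L_\bullet$ is the ordered chain complex of $\Delta$, the standard quasi-isomorphism with the alternating chain complex gives $H_n(L_\bullet)=H_n(\Delta)$, which agrees with $\tilde H_n(\Delta)$ in positive degrees, yielding the lemma. The main technical hurdle is the clean verification of the join identification in the first step, whose content is precisely the local-ring observation, recorded just before the lemma, that general position of tuples in $X_n(A)$ is detected on the residue field; the rest is standard algebraic topology.
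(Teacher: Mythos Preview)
Your identification of $\Delta$ as the simplicial join $\ast_{\alpha}F_\alpha$ is correct, as is its connectivity bound. The gap is in the final step: $L_\bullet$ is \emph{not} the ordered chain complex of $\Delta$, and $H_\ast(L_\bullet)\not\cong H_\ast(\Delta)$ in general. The ordered chain complex that is chain-equivalent to the oriented one is generated by tuples $(v_0,\ldots,v_n)$ spanning a simplex \emph{with repetitions allowed}, whereas $L_n=\Z[X_{n+1}]$ consists only of tuples with pairwise distinct images in $\projl{k}$, hence pairwise distinct entries. Thus $L_\bullet$ is only the proper subcomplex of non-degenerate ordered tuples, and that inclusion is not a quasi-isomorphism. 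For a clean counterexample take $A=k$: each fibre is a singleton, so $\Delta$ is the full simplex on $\projl{k}$ and is contractible, while $L_\bullet$ is the complex of injective words on $\projl{k}$, which has $\tilde{H}_{\card{k}}(L_\bullet)\neq 0$. Already for a two-letter alphabet the injective-words complex realises to a circle, not an interval.

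The vanishing in the stated range is still true, but it requires a direct connectivity argument for $L_\bullet$ rather than a comparison with $\Delta$. The paper supplies one via partial coning: for each $x\in\projl{k}$ choose a lift $\bar{\vect{u}}_x\in X_1(A)$ and let $S_x\colon L_n\to L_{n+1}$ prepend $\bar{\vect{u}}_x$ to tuples whose images avoid $x$ and annihilate the rest. One checks that $(\delta S_x+S_x\delta-\mathrm{Id})$ maps $L_n$ into the span $D_n(\{x\})$ of tuples whose images contain $x$; applying $(\delta S_{x_{n+2}}-\mathrm{Id})\cdots(\delta S_{x_1}-\mathrm{Id})$ to an $n$-cycle, for $n+2$ distinct points of $\projl{k}$ (which exist exactly when $n<\card{k}$), then forces the cycle to be a boundary. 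This is essentially the standard proof that the complex of injective words on $m$ letters is $(m-2)$-connected, adapted to non-singleton fibres; you could alternatively cite such a result and compare along $\pi$, but the ``standard quasi-isomorphism'' you invoked is not available here.
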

\begin{proof} When $A=k$ is a field, the argument is given in \cite{hut:cplx13}, Lemma 4.4. This argument is easily 
adapted to the current situation as follows: 

For any subset $S$ of $\projl{k}$, let $D_n(S)$ denote the subgroup of $L_n(A)$ generated by those $(n+1)$-tuples
$(\bar{\vect{u}}_1,\ldots,\bar{\vect{u}}_{n+1})\in X_{n+1}(A)$  which satisfy 
$S\subset \{ \overline{\pi(\vect{u}_1)}, \ldots,\overline{\pi(\vect{u}_{n+1})}\}$. Thus $D_n(S)=0$ if $\card{S}>n+1$.
Furthermore, $D_n(S_1\cup S_2)=D_n(S_1)\cap D_n(S_2)$ for any $S_1,S_2\subset \projl{k}$. 

Now for each $x\in \projl{k}$, choose $\vect{u}_x\in U_2(A)$ satisfying $\overline{\pi(\vect{u}_x)}=x$ and  for $n\geq 0$
define a homomorphism $S_x:L_n\to L_{n+1}$ by 
\[
S_x(\bar{\vect{u}}_1,\ldots,\bar{\vect{u}}_{n+1})=
\left\{
\begin{array}{ll}
(\bar{\vect{u}}_x,\bar{\vect{u}}_1,\ldots,\bar{\vect{u}}_{n+1}),& x\not\in 
\{ \overline{\pi(\vect{u}_1)}, \ldots,\overline{\pi(\vect{u}_{n+1})}\}\\
0,& \mbox{otherwise}\\
\end{array}
\right.
\]

Thus if $(\bar{\vect{u}}_1,\ldots,\bar{\vect{u}}_{n+1})\in X_{n+1}(A)$ and if  
$x\not\in 
\{ \overline{\pi(\vect{u}_1)}, \ldots,\overline{\pi(\vect{u}_{n+1})}\}$ then 
\[
\delta S_x(\bar{\vect{u}}_1,\ldots,\bar{\vect{u}}_{n+1})=
(\bar{\vect{u}}_1,\ldots,\bar{\vect{u}}_{n+1})-S_x\delta(\bar{\vect{u}}_1,\ldots,\bar{\vect{u}}_{n+1}).
\]
On the other hand, if $x= \overline{\pi(\vect{u}_j)}$ for some $j$ then 
\[
S_x\delta(\bar{\vect{u}}_1,\ldots,\bar{\vect{u}}_{n+1})=
(-1)^{j+1}(\bar{\vect{u}}_x,\bar{\vect{u}}_1,\ldots,\widehat{\bar{\vect{u}}_j},\ldots,\bar{\vect{u}}_{n+1}).
\]
and hence 
\begin{eqnarray*}
0&=&\delta S_x(\bar{\vect{u}}_1,\ldots,\bar{\vect{u}}_{n+1})\\
&=&
(\bar{\vect{u}}_1,\ldots,\bar{\vect{u}}_{n+1})-S_x\delta(\bar{\vect{u}}_1,\ldots,\bar{\vect{u}}_{n+1})
-\left\{ S_x\delta(\bar{\vect{u}}_1,\ldots,\bar{\vect{u}}_{n+1})-(-1)^j
(\bar{\vect{u}}_x,\bar{\vect{u}}_1,\ldots,\widetilde{\bar{\vect{u}}_j},\ldots,\bar{\vect{u}}_{n+1})\right\}.
\end{eqnarray*}
In either case we have 
\[
\delta S_x(\bar{\vect{u}}_1,\ldots,\bar{\vect{u}}_{n+1})=
(\bar{\vect{u}}_1,\ldots,\bar{\vect{u}}_{n+1})-S_x\delta(\bar{\vect{u}}_1,\ldots,\bar{\vect{u}}_{n+1})
+w
\]
where $w\in D_n(\{ x\})$. Furthemore, if $(\bar{\vect{u}}_1,\ldots,\bar{\vect{u}}_{n+1})\in D_n(S)$ for some 
subset $S$ of $\projl{k}$ then $w\in D_n(S\cup \{ x\})$.

Suppose now that $1\leq n < \card{k}$ and that $x_1,\ldots,x_{n+2}$ are $n+2$ distinct points of $\projl{k}$. 
Let $z\in L_n(A)$ be a cycle. Then
\[
(\delta S_{x_1}-\mathrm{Id})z=S_{x_1}\delta(z)+z_1=z_1
\] 
where $z_1\in D_n(\{ x_1\})$ and $z_1$ is again a cycle.

Thus $(\delta S_{x_2}-\mathrm{Id})z_1=z_2$ where $z_2$ is a cycle belonging to $D_n(\{ x_1,x_2\})$. Repeating the 
process we get
\[
(\delta S_{x_{n+2}}-\mathrm{Id})(\delta S_{x_{n+1}}-\mathrm{Id})\cdots (\delta S_{x_1}-\mathrm{Id})z\in 
D_n(\{ x_1,\ldots,x_{n+2}\})=0.
\]
This equation has the form $\delta(y)+(-1)^{n+2}z=0$ and hence $z=\delta((-1)^{n+1}y)$ is a boundary as required.
\end{proof}

\subsection{The third homology of $\mathrm{SL}_2$ of local domains}
Recall that we say a field $k$ is \emph{sufficiently large} if either $k$ is infinite or 
$\card{k}=p^f$ with $(p-1)f>6$; see Remark \ref{rem:suff}. 

\begin{thm}\label{thm:h3sl2A}
Let $A$ be a local integral domain with sufficiently large residue field $k$. 
Then 
there is a natural short exact sequence of $\zhalf{\sgr{A}}$-modules 
\[
0\to \mathrm{tor}\zhalf{(\mu_A,\mu_A)}\to \ho{3}{\spl{2}{A}}{\zhalf{\Z}}\to \zhalf{\rbl{A}}\to 0.
\]
\end{thm}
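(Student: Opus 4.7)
The plan is to adapt the proof of Theorem \ref{thm:BW} for fields from \cite{hut:cplx13}, using the hyperhomology spectral sequence of $\spl{2}{A}$ acting on $L_\bullet$. The preceding lemma shows $L_\bullet$ is acyclic in positive degrees below $|k|$ with $H_0(L_\bullet)=\Z$; since $|k|$ is sufficiently large, this yields a convergent spectral sequence
\[
E^1_{p,q} = \ho{q}{\spl{2}{A}}{L_p} \Longrightarrow \ho{p+q}{\spl{2}{A}}{\Z}
\]
in total degree $\leq 3$.

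First I would compute the $E^1$-page by Shapiro's lemma. Since $\spl{2}{A}$ acts transitively on $X_1$ and $X_2$ with stabilizers $B$ and $T$ respectively, $E^1_{0,q}=\ho{q}{B}{\Z}$ and $E^1_{1,q}=\ho{q}{T}{\Z}$. For $p\geq 2$, the stabilizer in $\spl{2}{A}$ of every point of $X_{p+1}$ is $\sset{\pm I}$; so after inverting $2$, $E^1_{p,q}\otimes\zhalf{\Z}$ vanishes for $p\geq 2$, $q>0$, while $E^1_{p,0}\otimes\zhalf{\Z}$ is $\zhalf{\sgr{A}}[Z_{p-2}]$ by Corollary \ref{cor:zxn}. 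Proposition \ref{prop:rpb} identifies the homology of this bottom row at position $p=3$ with $\zhalf{\rpb{A}}$.

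The decisive $d^1$-differential is the one from column $p=1$ to column $p=0$. By Proposition \ref{prop:t2b}, both face maps $T\hookrightarrow B$ arising from $\delta_1\colon L_1\to L_0$ individually induce isomorphisms $\ho{q}{T}{\Z}\cong\ho{q}{B}{\Z}$ for $q\leq 3$. Under these isomorphisms, $d^1$ becomes $\mathrm{id}-w^*$ on $\ho{q}{T}{\Z}$, where $w^*$ is induced by the Weyl involution $u\mapsto u^{-1}$ on $T\cong\unitr{A}$. Using the standard computation of the homology of an abelian group, one finds: on $\ho{1}{T}{\Z}=\unitr{A}$ this operator is multiplication by $2$, hence invertible after inverting $2$; on $\ho{2}{T}{\Z}=\Extpow{2}{\Z}{\unitr{A}}$ it vanishes; and on $\ho{3}{T}{\Z}$ one identifies the $w^*$-invariant piece with $\mathrm{tor}\zhalf{(\mu_A,\mu_A)}$ up to terms annihilated by $2$.

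The remaining step — and the main obstacle — is to verify that the higher differentials $d^r$ ($r\geq 2$) kill the unwanted surviving sub-quotients on the anti-diagonal $p+q=3$ (in particular the $\ho{2}{T}{\Z}$-contribution at $E^2_{1,2}$ and any residue at $E^2_{2,1}$), and at the same time produce the relations $\lambda_1=0$ and $\lambda_2=0$ that cut out $\zhalf{\rbl{A}}$ inside $\zhalf{\rpb{A}}$ at $E^\infty_{3,0}$. Once this is established, the filtration $F_\bullet\ho{3}{\spl{2}{A}}{\zhalf{\Z}}$ has only two non-zero graded pieces $E^\infty_{3,0}\cong \zhalf{\rbl{A}}$ and $E^\infty_{0,3}\cong \mathrm{tor}\zhalf{(\mu_A,\mu_A)}$, and the resulting extension is the desired short exact sequence. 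The spectral-sequence bookkeeping parallels that of \cite{hut:cplx13}, with Proposition \ref{prop:t2b} playing the role that the ``many units'' property plays for infinite fields.
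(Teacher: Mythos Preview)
Your proposal is correct and follows exactly the paper's approach: the hyperhomology spectral sequence of $\spl{2}{A}$ acting on $L_\bullet$, Shapiro's lemma for the stabilizers, the identification $\ho{n}{T}{\Z}\cong\ho{n}{B}{\Z}$ from Proposition~\ref{prop:t2b}, and the Weyl-involution analysis of $d^1$. One small bookkeeping slip: the $E^2$-term at your position $(3,0)$ is $\zhalf{\rpbker{A}}$, not $\zhalf{\rpb{A}}$, since the $d^1$-differential $E^1_{3,0}\to E^1_{2,0}$ along the bottom row is precisely $\gpb{z}\mapsto\pf{1-z}\pf{z}=\lambda_1(\gpb{z})$; only the relation $\lambda_2=0$ arises from a higher differential (the $d^3$ landing in $\Extpow{2}{\Z}{\zhalf{\unitr{A}}}$), and the term you flagged as $E^2_{2,1}$ already vanishes after inverting $2$.
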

\begin{proof} In the case where $A$ is a field, the proof can be found in \cite[section 4]{hut:cplx13}. 
We indicate here the adaptions needed to extend that proof to the current context:

Associated to the complex $L_\bullet=L_\bullet(A)$ there is hyperhomology spectral sequence of the form 
\[
E^1_{p,q}=\ho{p}{\spl{2}{A}}{\zhalf{L_q}}\Rightarrow \ho{p+q}{\spl{2}{A}}{\zhalf{L_\bullet}}
\]
and furthermore the augmentation $L_0\to \Z$ induces an isomorphism 
\[
\ho{n}{\spl{2}{A}}{\zhalf{L_\bullet}}\cong\ho{n}{\spl{2}{A}}{\zhalf{\Z}}
\]
for $n\leq 3$ by Remark \ref{rem:suff}.

The $\spl{2}{A}$-modules $L_n$ are permutation modules, so the $E^1$-terms are calculated using Shapiro's Lemma:

$\spl{2}{A}$ acts transitively on $X_1$ and the stabilizer of $0_+\in X_1$ is $B=B_A$. Thus 
\[
L_0=\Z[X_1]\cong\Z[B\backslash \spl{2}{A}]
\cong\Ind{{\Z}[B]}{{\Z}[\spl{2}{A}]}{{\Z}}
\]
and hence
\[
E^1_{p,0}=\ho{p}{\spl{2}{A}}{\zhalf{L_0}}\cong \ho{p}{B}{\zhalf{\Z}}.
\]

Similarly, $\spl{2}{A}$ acts transitively on $X_2$ and the stabilizer of $(0_+,0_-)$ is $T=T_A$, so that
\[
E^1_{p,1}=\ho{p}{\spl{2}{A}}{\zhalf{L_1}}\cong \ho{p}{T}{\zhalf{\Z}}.
\]

For $n\geq 3$, the stabilizer in $\spl{2}{A}$ of $(\bar{\vect{u}}_1,\ldots,\bar{\vect{u}}_{n})\in X_n$ 
is $Z(\spl{2}{A})\cong\mu_2(A)$. By Corollary \ref{cor:zxn} it follows that for $q\geq 2$ we have 
\[
E^1_{p,q}=\zhalf{\sgr{A}}[Z_{q-2}]\otimes\ho{p}{\mu_2(A)}{\Z}=
\left\{
\begin{array}{ll}
\zhalf{\sgr{A}}[Z_{q-2}],& p=0\\
0,& p>0\\
\end{array} 
\right.
\]
 where $Z_n=Z_n(A)$ as above.   

Thus our $E^1$-page has the form
\begin{eqnarray*}
\xymatrix{
\vdots&\vdots&\vdots&\vdots&\\
\zhalf{\sgr{A}}[Z_2]\ar[d]^-{d^1}&0 
&\vdots&\vdots&\hdots\\
\zhalf{\sgr{A}}[Z_1]\ar[d]^-{d^1}&0&0&0&\hdots\\
\zhalf{\sgr{A}}\ar[d]^-{d^1}&0&0&0&\hdots\\
\zhalf{\Z}\ar[d]^-{d^1}&\ho{1}{T}{\zhalf{\Z}}\ar[d]^-{d^1}&\ho{2}{T}{\zhalf{\Z}}\ar[d]^-{d^1}&
\ho{3}{T}{\zhalf{\Z}}\ar[d]^-{d^1}&\hdots\\
\zhalf{\Z}&\ho{1}{T}{\zhalf{\Z}}&\ho{2}{T}{\zhalf{\Z}}&\ho{3}{T}{\zhalf{\Z}}&\hdots
}
\end{eqnarray*}

Now $T\cong\unitr{A}$. Thus $E^1_{p,q}\cong \ho{p}{\unitr{A}}{\zhalf{\Z}}$ for $p\leq 3$ and $q\in \{ 0,1\}$. 

Now let 
\[
w:=\matr{0}{-1}{1}{0}\in\spl{2}{A}.
\]
Then $w(0_+)=0_-$ and $w(0_-)= 0_+$. It follows easily that the differential
\[
d^1:E^1_{p,1}=\ho{p}{T}{\zhalf{\Z}}\to \ho{p}{T}{\zhalf{\Z}}= E^1_{p,0}
\]
is the map
\begin{eqnarray*}
\xymatrix{
\ho{p}{T}{\zhalf{\Z}}\ar[r]^-{w_p-1}
&
\ho{p}{T}{\zhalf{\Z}}
}
\end{eqnarray*}
where $w_p:\ho{p}{T}{\zhalf{\Z}}\to\ho{p}{T}{\zhalf{\Z}}$ is the map induced by conjugation by $w$. 
However, conjugating by $w$ is just the inversion map on $\unitr{A}\cong T$.  
Thus $d^1:\zhalf{\Z}=E^1_{0,1}\to E^1_{0,0}=\zhalf{\Z}$ is the zero map. $d^1:\zhalf{\unitr{A}}=E^1_{1,1}\to E^1_{1,0}=
\zhalf{\unitr{A}}$ is the map $u\mapsto u^{-2}$ and hence is an isomorphism. 
$d^1:\Extpow{2}{\Z}{\zhalf{\unitr{A}}}=E^1_{2,1}\to E^1_{2,0}=\Extpow{2}{\Z}{\zhalf{\unitr{A}}}$ is the zero map. 

Finally, $E^1_{3,1}=E^1_{3,0}=\ho{3}{\unitr{A}}{\zhalf{\Z}}\cong \Extpow{3}{\Z}{\zhalf{\unitr{A}}}\oplus 
\mathrm{tor}\zhalf{(\mu_A,\mu_A)}$. The map $d^1:E^1_{3,1}\to E^1_{3,0}$ is an isomorphism of the first factor and the zero 
map on the second factor.

The differential 
\[
d^1:\zhalf{\sgr{A}}\cong \ho{0}{\spl{2}{A}}{L_2}=E^1_{0,2}\to E^1_{0,1}=\ho{0}{\spl{2}{A}}{L_1}\cong \zhalf{\Z}
\]
is the natural augmentation sending $\an{u}$ to $1$ for any $u\in\unitr{A}$. 

As in the proof of 
\cite[Theorem 4.3]{hut:cplx13}, the differential 
\[
d^1:\zhalf{\sgr{A}}[Z_1]\cong \ho{0}{\spl{2}{A}}{L_3}=E^1_{0,3}\to E^1_{0,2}=\ho{0}{\spl{2}{A}}{L_2}\cong \zhalf{\sgr{A}}
\]
is the $\sgr{A}$-homomorphism sending $(z)$ to $\pf{z}\pf{1-z}\in \aug{A}^2$ for any $z\in \wn{A}$.

By the proof of Proposition \ref{prop:rpb} above, the differential 
\[
d^1:\zhalf{\sgr{A}}[Z_2]\cong \ho{0}{\spl{2}{A}}{L_4}=E^1_{0,4}\to E^1_{0,3}=\ho{0}{\spl{2}{A}}{L_3}\cong 
\zhalf{\sgr{A}}[Z_1]
\]
is the map 
\[
(z_1,z_2)\mapsto \an{1-z_1}\left(\frac{1-z_1}{1-z_2}\right)-\an{z_1^{-1}-1}\left(\frac{1-z_1^{-1}}{1-z_2^{-1}}\right)
+\an{z_1}\left(\frac{z_2}{z_1}\right)-(z_2)+(z_1) 
\]

Thus the $E^2$-page of our spectral sequence has the form 
\begin{eqnarray*}
\xymatrix{
\zhalf{\rpbker{A}}&0&0&\vdots\\
\zhalf{\aug{A}}/\zhalf{\mathcal{J}_A}&0&0&\vdots\\
0&0&\Extpow{2}{\Z}{\zhalf{\unitr{A}}}&\vdots\\
\zhalf{\Z}&0&\Extpow{2}{\Z}{\zhalf{\unitr{A}}}&\mathrm{tor}\zhalf{(\mu_A,\mu_A)}
}
\end{eqnarray*}
where $\mathcal{J}_A\subset \sgr{A}$ is the ideal generated by the Steinberg elements $\pf{u}\pf{1-u}$. 

Clearly there are no nonzero $d^2$-differentials. So the $E^3$-page has the form 
\begin{eqnarray*}
\xymatrix{
E^3_{0,4}\ar[dddrr]^-{d^3}&0&0&\vdots\\
\zhalf{\rpbker{A}}\ar[dddrr]^-{d^3}&0&0&\vdots\\
\zhalf{\aug{A}}/\zhalf{\mathcal{J}_A}&0&0&\vdots\\
0&0&\Extpow{2}{\Z}{\zhalf{\unitr{A}}}&\vdots\\
\zhalf{\Z}&0&\Extpow{2}{\Z}{\zhalf{\unitr{A}}}&\mathrm{tor}\zhalf{(\mu_A,\mu_A)}
}
\end{eqnarray*}

The argument now concludes exactly as in \cite[section 4]{hut:cplx13}: The 
cokernel of the differential $d^3:E^3_{0,4}\to \Extpow{2}{\Z}{\zhalf{\unitr{A}}}$ is annihilated by $2$ and hence 
$E^4_{2,1}=E^\infty_{2,1}=0$. There is a commutative diagram 
\begin{eqnarray*}
\xymatrix{
\zhalf{\rpbker{A}}\ar[r]^-{d^3}\ar[d]\ar[dr]^-{\lambda_2}
&
\Extpow{2}{\Z}{\zhalf{\unitr{A}}}\ar[d]^{\cong}\\
\zhalf{\pb{A}}\ar[r]^-{\lambda}
&
\asym{2}{\Z}{\zhalf{\unitr{A}}}
}
\end{eqnarray*}
 It follows that $E^4_{0,3}=E^\infty_{0,3}=\ker{\lambda_2:\zhalf{\rpbker{A}}\to \zhalf{\asym{2}{\Z}{\unitr{A}}}}=
\zhalf{\rbl{A}}$.  This completes the proof of the theorem. 
\end{proof}

\subsection{Local domains with finite residue fields}\label{sec:bwfin}
\begin{cor}\label{cor:bwfin}
Let $A$ be a local integral domain with sufficiently large residue field.  Then 
the natural map $\ho{3}{\spl{2}{A}}{{\Z}}\to \kind{A}$ induces an isomorphism 
\[
\ho{3}{\spl{2}{A}}{\zhalf{\Z}}_{\unitr{A}}\cong \zhalf{\kind{A}}.
\]
\end{cor}

\begin{proof}
Theorem 
\ref{thm:h3sl2A} implies the exactness of the top row in the diagram 
\[
\xymatrix{
0\ar[r]
&\mathrm{tor}\zhalf{(\mu_{A},\mu_{A})}\ar[r]\ar[d]^-{=} 
&\ho{3}{\spl{2}{A}}{\zhalf{\Z}}\ar[r]\ar[d]
& \zhalf{\rbl{A}}\ar[r]\ar[d]^-{=}
&0\\
0\ar[r]
&\mathrm{tor}\zhalf{(\mu_{A},\mu_{A})}\ar[r]
&\zhalf{\kind{A}}\ar[r]
& \zhalf{\bl{A}}\ar[r]
&0\\
}
\]
The bottom row is exact by \cite[Theorem 5.1]{mirzaii:mok2} (infinite residue field) and 
\cite[Theorem 6.1]{mirzaii:arxivbw} (finite residue field). 

The corollary now follows by taking $\unitr{A}$-coinvariants on the top row, since 
$\mathrm{tor}(\mu_A,\mu_A)_{\unitr{A}}= \mathrm{tor}(\mu_A,\mu_A)$ and 
$\zhalf{\rbl{A}}_{\unitr{A}}=\zhalf{\bl{A}}$ by Proposition \ref{prop:rblrpbker} (2). 
\end{proof}
\begin{cor}\label{cor:rpbkerA}
Let $A$ be a local integral domain with sufficiently large residue field.  
Then 
\[
\aug{A}\zhalf{\rpbker{A}}\cong\aug{A}\ho{3}{\spl{2}{R}}{\zhalf{\Z}}=\hot{A}{\zhalf{\Z}}.
\]
\end{cor}
\begin{proof}
Combining Theorem \ref{thm:h3sl2A} with the statements in 
Proposition \ref{prop:mirzaii} and Corollary \ref{cor:bwfin} we obtain a commutative 
diagram with exact rows and columns
\[
\xymatrix{
&
&
0\ar[d]
&
0\ar[d]
&\\
&&
\aug{A}\ho{3}{\spl{2}{A}}{\zhalf{\Z}}\ar[r]\ar[d]
&\aug{A}\zhalf{\rbl{A}}\ar[d]
&\\
0\ar[r]
&
\mathrm{tor}\zhalf{(\mu_A,\mu_A)}\ar[r]\ar^-{=}[d]
&
\ho{3}{\spl{2}{A}}{\zhalf{\Z}}\ar[r]\ar[d]
&
\zhalf{\rbl{A}}\ar[r]\ar[d]
&
0\\
0\ar[r]
&
\mathrm{tor}\zhalf{(\mu_A,\mu_A)}\ar[r]
&
\zhalf{\kind{A}}\ar[r]\ar[d]
&
\zhalf{\bl{A}}\ar[r]\ar[d]
&
0\\
&&
0
&
0
&\\
}
\]
from which it follows that 
\[
\aug{A}\zhalf{\rbl{A}}\cong\aug{A}\ho{3}{\spl{2}{R}}{\zhalf{\Z}}=\hot{A}{\zhalf{\Z}}.
\]
On the other hand, Proposition \ref{prop:rblrpbker} tells us that $\aug{A}\zhalf{\rbl{A}}\cong 
\aug{A}\zhalf{\rpbker{A}}$.
\end{proof}

\section{Submodules and special elements in $\rpb{A}$}\label{sec:rpbA}
In this section, $A$ will denote a commutative local ring, with maximal ideal $\mathcal{M}$ and residue 
field 
$k=A/\mathcal{M}$.  Furthermore, we will suppose that $k$ has at least four elements.

In this case, we have $\unitr{A}=A\setminus \mathcal{M}$ and $\wn{A}=\unitr{A}\setminus U_1$, where 
$U_1=U_{1,A}=1+\mathcal{M}$. In particular,
if $x\in \unitr{A}$, then $x\in \wn{A} \iff x^{-1}\in\wn{A}$. 

\subsection{The modules $\ks{i}{A}$}

As in \cite{hut:rbl11}, we define two families of elements of $\rpb{A}$. 

Given $x\in \wn{A}$ we define 
\[
\suss{1}{x}:=\gpb{x}+\an{-1}{\gpb{x^{-1}}}
\]
and 
\[
\suss{2}{x}:=
\an{1-x}\left(\an{x}\gpb{x}+\gpb{x^{-1}}\right)=\an{x^{-1}-1}\gpb{x}+\an{1-x}\gpb{x^{-1}}.
\]

Observe, from the definitions,  that $\suss{i}{x^{-1}}=\an{-1}\suss{i}{x}$ for all $x\in \wn{A}$. 

\begin{lem} \label{lem:sus1}
For $i=1,2$ we have 
\begin{enumerate}
\item \label{lem:susa}
$\suss{i}{xy}=\an{x}\suss{i}{y}+\suss{i}{x}$ whenever $x,y,xy\in\wn{A}$.
\item $\an{x}\suss{i}{x^{-1}}+\suss{i}{x}=0$ for all $x\in \wn{A}$. 
\end{enumerate}
\end{lem}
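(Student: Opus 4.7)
The plan is to imitate the proof of the corresponding identities for fields in \cite{hut:rbl11}, checking that the manipulations go through verbatim once one observes that, for a local ring $A$ with residue field of size $\geq 4$, the set $\wn{A}=\unitr{A}\setminus(1+\mathcal{M})$ has all the closure properties that are needed: it is stable under $x\mapsto x^{-1}$, under $x\mapsto 1-x$, and under $x\mapsto x/y$ whenever $x,y,x/y\in\wn{A}$. Everything else is formal manipulation of the defining relation $S_{x,y}$ together with bookkeeping on square classes.

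For part (1), I would proceed as follows. The identity $\suss{i}{xy}=\an{x}\suss{i}{y}+\suss{i}{x}$ is an $\sgr{A}$-linear statement about six generators $\gpb{xy},\gpb{(xy)^{-1}},\gpb{x},\gpb{x^{-1}},\gpb{y},\gpb{y^{-1}}$. I would obtain it by writing down two instances of the $S$-relation chosen so that each of $\gpb{xy}$ (respectively $\gpb{(xy)^{-1}}$) is expressed as an $\sgr{A}$-linear combination of $\gpb{x},\gpb{x^{-1}},\gpb{y},\gpb{y^{-1}}$ plus terms of the form $\gpb{(1-a)/(1-b)}$; a second, dual, instance produces the same "cross-ratio" terms with opposite signs, so that in the sum they cancel. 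A convenient choice (the one used in the field case) is to combine $S_{x^{-1},(xy)^{-1}}$ with $S_{y,xy}$ (and analogous pairs for $i=2$); the relation $\an{x^{-1}}=\an{x}$ and $\an{-x}=\an{x}\an{-1}$ then line up the square-class coefficients. The case $i=2$ follows from the $i=1$ case after multiplying through by the appropriate $\an{1-\cdot}$ factor, or by the same combination of $S$-relations with the shift $\gpb{x}\mapsto\an{1-x}\gpb{x}$ built in.

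For part (2), I would argue that once part (1) is established, it specializes to part (2) by taking $y=x^{-1}$. This substitution is formally outside the hypothesis of part (1), since then $xy=1\notin\wn{A}$, but one may adopt the convention $\gpb{1}=0$ (which is forced: substituting $y=x$ in $S_{x,y}$ gives $(\an{x}-\an{x^{-1}-1}+\an{1-x})\gpb{1}=0$, and a similar analysis with shifted variables shows $\gpb{1}$ must vanish as an element of $\rpb{A}$), whereupon $\suss{i}{1}=0$ and part (1) reduces to $0=\an{x}\suss{i}{x^{-1}}+\suss{i}{x}$. Alternatively, and more cleanly, one uses the observation $\suss{i}{x^{-1}}=\an{-1}\suss{i}{x}$ to rewrite (2) as $\pp{-x}\suss{i}{x}=0$, and verifies this directly by expanding $\suss{i}{x}$ into the two generators $\gpb{x},\gpb{x^{-1}}$, multiplying through by $\pp{-x}=1+\an{-x}$, and matching the resulting four terms against a specialization of $S_{x,-x^{-1}}$ (which requires $-x^{-1}\in\wn{A}$, i.e.\ $1+x^{-1}\in\unitr{A}$; if this fails in the local ring, one first replaces $x$ by a generic representative of its square class).

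The only real obstacle is the square-class bookkeeping: each of the six generators in (1) carries an $\an{\cdot}$ coefficient, and one must verify that after combining the two $S$-relations the unwanted terms cancel exactly, with no $2$-torsion ambiguity. This is where the hypothesis $\card{k}\geq 4$ enters implicitly, as it guarantees that the substitutions chosen for $x,y$ in the $S$-relation lie in $\wn{A}$.
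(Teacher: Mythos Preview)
Your treatment of part (1) is fine in outline and matches what the paper does (it simply defers to the field case in \cite{hut:rbl11}, whose argument goes through unchanged).

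Your approach to part (2), however, has a genuine gap. Neither of your two proposed routes works as written. In your first route you want to set $y=x^{-1}$ in part (1) after adopting the convention $\gpb{1}=0$; but $1\notin\wn{A}$, so $\gpb{1}$ is not a generator of $\rpb{A}$, and the relation $S_{x,x}$ you invoke to justify the convention is not a relation at all (it requires $y/x=1\in\wn{A}$). Part (1) is only established for $x,y,xy\in\wn{A}$, and you have given no mechanism for extending it to $xy=1$. In your second route you use $S_{x,-x^{-1}}$, which imposes extra constraints (such as $1+x^{-1}\in\unitr{A}$) that can fail for the given $x$; your proposed fix of ``replacing $x$ by a generic representative of its square class'' is circular, since $\suss{i}{x}$ is not yet known to depend only on the square class of $x$ (that comes later, in Proposition~\ref{prop:cocycle}).

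The paper's argument for (2) avoids all of this by introducing an auxiliary element: given $x\in\wn{A}$, choose $y\in\wn{A}$ with $xy\in\wn{A}$ (possible since $\card{k}\geq 4$), then apply part (1) twice---once to the pair $(y,x)$ and once to the pair $(xy,x^{-1})$, noting that $xy\cdot x^{-1}=y$. The two resulting expressions for $\suss{i}{xy}-\suss{i}{y}$ give $\an{y}\suss{i}{x}=-\an{xy}\suss{i}{x^{-1}}$, and multiplying through by $\an{y}$ finishes. This is a short and robust trick worth internalizing: rather than forcing a degenerate substitution, pass through a generic auxiliary point where part (1) applies legitimately.
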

\begin{proof}
\begin{enumerate}
\item The proof of Lemma 3.1 in \cite{hut:rbl11} adapts without alteration.
\item  Let $x\in \wn{A}$. Choose $y\in \wn{A}$ such that $xy\in \wn{A}$ also. (Note that this is possible because of 
the hypothesis that $\card{k}\geq 4$.)

Then by part (\ref{lem:susa}) we have 
\[
\suss{i}{xy}=\an{y}\suss{i}{x}+\suss{i}{y} \imp \an{y}\suss{i}{x}=\suss{i}{xy}-\suss{i}{y}
\] 
and 
\[
\suss{i}{y}=\suss{i}{xy\cdot x^{-1}}=\an{xy}\suss{i}{x^{-1}}+\suss{i}{xy}\imp \an{xy}\suss{i}{x^{-1}}=\suss{i}{y}
-\suss{i}{xy}.
\]
Form these we deduce that 
\[
\an{y}\suss{i}{x}=-\an{xy}\suss{i}{x^{-1}}.
\]
Multiplying both sides of this equation by $\an{y}$ gives the result.
\end{enumerate}
\end{proof}

\begin{lem} Let $u\in U_1$. For any $w_1,w_2\in \wn{A}$, we have 
\[
\suss{i}{w_1u}-\an{u}\suss{i}{w_1}=\suss{i}{w_2u}-\an{u}\suss{i}{w_2}
\]
for $i=1,2$.
\end{lem}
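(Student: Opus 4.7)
The plan is to apply Lemma \ref{lem:sus1}(\ref{lem:susa}) twice, once to the pair $(w_1u, w_2/w_1)$ and once to the pair $(w_1, w_2/w_1)$, and subtract. The crucial preliminary observation is that for any $w\in \wn{A}$ and $u\in U_1$ we have $wu\in \wn{A}$: indeed $wu$ is a unit, and
\[
1-wu = (1-w) + w(1-u) \in \unitr{A}+\mathcal{M} = \unitr{A},
\]
since $A$ is local.

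First I treat the generic case in which $\bar{w_1}\neq \bar{w_2}$ in the residue field $k$. Then $w_1-w_2\not\in\mathcal{M}$, so $w_2/w_1\in \wn{A}$, and all six elements appearing below lie in $\wn{A}$. Applying Lemma \ref{lem:sus1}(\ref{lem:susa}) with $(x,y) = (w_1u,\, w_2/w_1)$, noting that $xy=w_2u$ and $\an{w_1u}=\an{u}\an{w_1}$, gives
\[
\suss{i}{w_2u}\;=\;\an{u}\an{w_1}\,\suss{i}{w_2/w_1}\,+\,\suss{i}{w_1u}.
\]
Applying it again with $(x,y)=(w_1,\, w_2/w_1)$ gives
\[
\suss{i}{w_2}\;=\;\an{w_1}\,\suss{i}{w_2/w_1}\,+\,\suss{i}{w_1}.
\]
Subtracting $\an{u}$ times the second equation from the first makes the $\suss{i}{w_2/w_1}$ terms cancel, yielding
\[
\suss{i}{w_2u} - \an{u}\suss{i}{w_2} \;=\; \suss{i}{w_1u} - \an{u}\suss{i}{w_1},
\]
as required.

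For the general case, when $\bar{w_1}=\bar{w_2}$, I introduce an auxiliary element $z\in \wn{A}$ with $\bar{z}\notin\{0,1,\bar{w_1}\}$. Such a $z$ exists because $|k|\geq 4$ leaves at least one available residue class after excluding three. Then $\bar{z}\neq \bar{w_j}$ for $j=1,2$, so the previous case applies to the pairs $(w_1,z)$ and $(z,w_2)$ in turn, giving
\[
\suss{i}{w_1u}-\an{u}\suss{i}{w_1}\;=\;\suss{i}{zu}-\an{u}\suss{i}{z}\;=\;\suss{i}{w_2u}-\an{u}\suss{i}{w_2}.
\]

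The only delicate point is the case $|k|=4$, where the auxiliary element $z$ has essentially no room to be chosen; but the counting argument above still works, so no separate treatment is needed. Everything else is a routine manipulation of Lemma \ref{lem:sus1}(\ref{lem:susa}).
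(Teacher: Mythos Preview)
Your proof is correct and follows essentially the same route as the paper's own argument: both split into the case $\bar w_1\neq\bar w_2$, where two applications of Lemma~\ref{lem:sus1}(\ref{lem:susa}) with the ratio $w_2/w_1$ (the paper uses $w_1w_2^{-1}$, which is the same idea) cause the cross term to cancel, and the case $\bar w_1=\bar w_2$, which is reduced to the first by interpolating an auxiliary $z$ with distinct residue. Your explicit verification that $wu\in\wn{A}$ for $w\in\wn{A}$, $u\in U_1$ is a point the paper uses tacitly.
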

\begin{proof}
First suppose that $w_1\not\equiv w_2\pmod{U_1}$.  Then $w_1w_2^{-1}\in\wn{A}$ and hence 
\begin{eqnarray*}
\suss{i}{w_1u}=\suss{i}{(w_2^{-1}w_1)\cdot (w_2u)}\\
&=& \an{w_2u}\suss{i}{w_1w_2^{-1}}+\suss{i}{w_2u}\\
&=& \an{u}\left( \an{w_2}\suss{i}{w_1w_2^{-1}}\right) +\suss{i}{w_2u}\\
&=& \an{u}\left(\suss{i}{w_2}-\suss{i}{w_1}\right)+\suss{i}{w_2u}
\end{eqnarray*}
giving the result in this case.

On the other hand, if $w_1\equiv w_2\pmod{U_1}$ choose $w_3\in \wn{A}$ with $w_3\not\equiv w_1\pmod{U_1}$. Then 
\[
\suss{i}{w_1u}-\an{u}\suss{i}{w_1}=\suss{i}{w_3u}-\an{u}\suss{i}{w_3}\suss{i}{w_2u}-\an{u}\suss{i}{w_2}.
\] 
\end{proof}

We now extend the definition of $\suss{i}{x}$ to allow $x\in U_1$. For $u\in U_1$, we define 
\[
\suss{i}{u}:=\suss{i}{uw}-\an{u}\suss{i}{w}
\]
for any $w\in \wn{A}$.

\begin{prop} For $i=1,2$ the maps $\unitr{A}\to \rpb{A}$, $x\mapsto \suss{i}{x}$ define $1$-cocycles; i.e. we 
have 
\[
\suss{i}{xy}=\an{x}\suss{i}{y}+\suss{i}{x}
\]
for all $x,y\in \unitr{A}$. 
\end{prop}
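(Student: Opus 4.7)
The plan is to verify the cocycle identity by decomposing $\unitr{A} = \wn{A} \sqcup U_1$ (every unit $u$ either satisfies $1-u \in \unitr{A}$, placing it in $\wn{A}$, or has $u \equiv 1 \pmod{\mathcal{M}}$, placing it in $U_1$) and arguing separately in each of the four cases determined by the memberships of $x$ and $y$. In each case I would choose an appropriate auxiliary element $w \in \wn{A}$, unfold the extended definition from the preceding lemma, and reduce the identity to instances of Lemma \ref{lem:sus1}(1). The standing assumption $\card{k}\geq 4$, hence $\card{\wn{k}}\geq 2$, ensures that such $w$ can always be chosen subject to any single additional congruence restriction modulo $\mathcal{M}$.

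Concretely, when $x,y \in \wn{A}$: if $xy \in \wn{A}$ too, this is Lemma \ref{lem:sus1}(1) directly; otherwise $xy \in U_1$ and I pick $w \in \wn{A}$ with $w \not\equiv x \pmod{\mathcal{M}}$, so that $yw$ and $xyw$ also lie in $\wn{A}$, apply Lemma \ref{lem:sus1}(1) first to $(x,yw)$ and then to $(y,w)$ to compute $\suss{i}{xyw}=\an{xy}\suss{i}{w}+\an{x}\suss{i}{y}+\suss{i}{x}$, and conclude via the defining formula $\suss{i}{xy}=\suss{i}{xyw}-\an{xy}\suss{i}{w}$. When $x \in U_1$ and $y \in \wn{A}$, the identity is immediate by taking $w:=y$ in the extended definition of $\suss{i}{x}$. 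When $x \in \wn{A}$ and $y \in U_1$, I pick $w \in \wn{A}$ with $w \not\equiv x^{-1} \pmod{\mathcal{M}}$, which forces $yw,xyw \in \wn{A}$; then unfolding $\suss{i}{y}=\suss{i}{yw}-\an{y}\suss{i}{w}$ and applying Lemma \ref{lem:sus1}(1) to each of $(x,yw)$ and $(xy,w)$ yields the result. Finally, when both $x,y \in U_1$, I choose any $w \in \wn{A}$, write $\suss{i}{xy}=\suss{i}{xyw}-\an{xy}\suss{i}{w}$, and apply the already-established second case (one element in $U_1$, the other in $\wn{A}$) twice: first to $(x,yw)$ and then to $(y,w)$, obtaining $\suss{i}{xyw}=\an{xy}\suss{i}{w}+\an{x}\suss{i}{y}+\suss{i}{x}$.

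No individual case is difficult once the extension is known to be well-defined; the only bookkeeping required is tracking the congruence restrictions on the auxiliary $w \in \wn{A}$, which is always feasible under the hypothesis $\card{k}\geq 4$. The argument is essentially formal, bootstrapping everywhere from Lemma \ref{lem:sus1}(1). The one subtlety to watch is that in the case $x \in \wn{A}, y \in U_1$, the naive symmetry with the previous case would only give $\suss{i}{xy}=\an{y}\suss{i}{x}+\suss{i}{y}$, differing from the desired formula by $\pf{x}\suss{i}{y}=\pf{y}\suss{i}{x}$; the indirect route through the auxiliary $w$ and two applications of Lemma \ref{lem:sus1}(1) is what bypasses this apparent obstruction.
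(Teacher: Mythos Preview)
Your proof is correct and follows essentially the same case division and argument as the paper. The only minor difference is in the sub-case $x,y\in\wn{A}$, $xy\in U_1$: the paper dispatches this directly via Lemma~\ref{lem:sus1}(2) (writing $\suss{i}{xy}=\suss{i}{xy\cdot y^{-1}}-\an{xy}\suss{i}{y^{-1}}=\suss{i}{x}+\an{x}\suss{i}{y}$), whereas you handle it uniformly with the auxiliary-$w$ technique; both routes are valid and of comparable length.
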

\begin{proof}
If $x,y, xy \in \wn{A}$, this is part (1) of Lemma \ref{lem:sus1}. 

If $x,y\in \wn{A}$, but $xy\in U_1$, then 
\begin{eqnarray*}
\suss{i}{xy}&=&\suss{i}{xy\cdot y^{-1}}-\an{xy}\suss{i}{y^{-1}}\\
&=& \suss{i}{x}-\an{x}\left(\an{y}\suss{i}{y^{-1}}\right)\\
&=& \suss{i}{x}+\an{x}\suss{i}{y}
\end{eqnarray*}
using Lemma \ref{lem:sus1} (2) in the last step.

If $x\in U_1$ and $y\in \wn{A}$, the identity is just the definition of $\suss{i}{x}$.

On the other hand, if $x\in \wn{A}$ and $y\in U_1$, then choose $w\in \wn{A}$ such that $xw\in \wn{A}$. We have 
\begin{eqnarray*}
\an{x}\suss{i}{y}+\suss{i}{x}&=& \an{x}\left(\suss{i}{wy}-\an{y}\suss{i}{w}\right)+\suss{i}{x}\\
&=& \an{x}\suss{i}{wy}+\suss{i}{x}-\an{xy}\suss{i}{w}\\
&=& \suss{i}{xyw}-\an{x}\suss{i}{yw}\\
&=& \suss{i}{xy}.
\end{eqnarray*}

Finally, suppose that $x,y\in U_1$. Let $w\in \wn{A}$. Then 
\begin{eqnarray*}
\suss{i}{xy}&=& \suss{i}{xyw}-\an{xy}\suss{i}{w}\\
&=& \an{x}\suss{i}{yw}+\suss{i}{x}-\an{xy}\suss{i}{w}\\
&=& \an{x}\left(\suss{i}{yw}-\an{y}\suss{i}{w} \right)+\suss{i}{x}\\
&=& \an{x}\suss{i}{y}+\suss{i}{x}
\end{eqnarray*}
as required.
\end{proof}

We recall here, from \cite{hut:rbl11} some of the basic algebraic properties of the $\suss{i}{x}$. (The proofs given 
in \cite{hut:rbl11} for the case of fields adapt without change to the case of local rings.)

\begin{prop}\label{prop:cocycle}
For $i\in \{1,2\}$ we have:
\begin{enumerate}
\item $\pf{x}\suss{i}{y}=\pf{y}\suss{i}{x}$ for all $x,y$
\item
 $\suss{i}{xy^2}=\suss{i}{x}+\suss{i}{y^2}$ for all $x,y$
\item $\pf{x}\suss{i}{y^2}=0$ for all $x,y$
\item $2\cdot\suss{i}{-1}=0$ for all $i$ 
\item $\suss{i}{x^2}=-\pf{x}\suss{i}{-1}$ for all $x$
\item $2\cdot\suss{i}{x^2}=0 $ for all $x$ and if $-1$ is a square in $\unitr{A}$ 
then $\suss{i}{x^2}=0$ for all $x$.
\item $\pf{x}\pf{y}\suss{i}{-1}=0$ for all $x,y$
\item $\an{-1}\pf{x}\suss{i}{y}=\pf{x}\suss{i}{y}$ for all $x,y$
\item Let 
\[
\epsilon(A):=\left\{
\begin{array}{ll}
1, & -1\in (\unitr{A})^2\\
2,&  -1\not\in (\unitr{A})^2\\
\end{array}
\right.
\]
The map $\unitr{A}/(\unitr{A})^2\to\rpb{A},\an{x}\mapsto \epsilon(A)\suss{i}{x}$ is a well-defined $1$-cocycle.
\end{enumerate}
\end{prop}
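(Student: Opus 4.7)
The plan is to derive the nine identities essentially in the order listed, deducing each from the cocycle identity established just above the proposition together with the defining relations $S_{x,y}$ of $\rpb{A}$. Only (5) will require a nontrivial manipulation; everything else is a formal consequence.

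First I would handle (1)--(3) as formal consequences of the cocycle property. For (1), apply $\suss{i}{xy} = \an{x}\suss{i}{y} + \suss{i}{x}$ and $\suss{i}{yx} = \an{y}\suss{i}{x} + \suss{i}{y}$, equate, and rearrange. For (3), specialize (1) to put a square in the second slot and use $\pf{y^2} = \an{y^2} - 1 = 0$ in $\sgr{A}$. For (2), the cocycle gives $\suss{i}{xy^2} = \an{x}\suss{i}{y^2} + \suss{i}{x}$; but (3) forces $\an{x}\suss{i}{y^2} = \suss{i}{y^2}$, so the claim follows.

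The central technical step is (5): the identity $\suss{i}{x^2} = -\pf{x}\suss{i}{-1}$. The cocycle applied with both arguments equal to $x$ gives $\suss{i}{x^2} = \pp{x}\suss{i}{x}$, so one must show $\pp{x}\suss{i}{x} = (1-\an{x})\suss{i}{-1}$. I plan to prove this by a direct computation in $\rpb{A}$, expanding $\suss{1}{x}$ and $\suss{2}{x}$ in terms of $\gpb{x}$ and $\gpb{x^{-1}}$ and invoking an appropriately chosen instance of the five-term relation $S_{x,y}$ (for $y$ related to $x$, such as $-x$ or $1-x$). This computation is carried out for fields in \cite{hut:rbl11}, and adapts verbatim provided $\card{k} \geq 4$. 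This is the only place where the defining relations of $\rpb{A}$ enter nontrivially, and the one genuine obstacle. Once (5) is in hand, (4) drops out by specializing (5) at $x=-1$: the left side is $\suss{i}{1} = 0$ (by the cocycle, $\suss{i}{1} = 2\suss{i}{1}$), so $0 = -\pf{-1}\suss{i}{-1} = (1-\an{-1})\suss{i}{-1}$, and combining with $\an{-1}\suss{i}{-1} = -\suss{i}{-1}$ from Lemma \ref{lem:sus1}(2) yields $2\suss{i}{-1} = 0$.

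The remaining items are short deductions from (1), (3), (4), (5).
For (6): multiplying (5) by $2$ and invoking (4) gives $2\suss{i}{x^2} = 0$; if $-1 \in \unitr{A}^2$ then $\pf{-1} = 0$, and (1) yields $\pf{x}\suss{i}{-1} = \pf{-1}\suss{i}{x} = 0$, whence $\suss{i}{x^2} = 0$.
For (7): substitute (5) into the vanishing in (3):
\[
0 \;=\; \pf{x}\suss{i}{y^2} \;=\; -\pf{x}\pf{y}\suss{i}{-1}.
\]
For (8): the statement is equivalent to $\pf{-1}\pf{x}\suss{i}{y} = 0$; two applications of (1) rewrite this as $\pf{y}\pf{x}\suss{i}{-1}$, which vanishes by (7).
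For (9): well-definedness on $\sq{A}$ amounts to $\epsilon(A)\suss{i}{y^2} = 0$, which is exactly (6) in both cases of $\epsilon(A)$; and the cocycle property for the induced map on $\sq{A}$ is inherited from that of $\suss{i}$ on $\unitr{A}$, since (6) makes the dependence on representatives harmless after multiplication by $\epsilon(A)$.
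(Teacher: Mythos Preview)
Your plan is correct. The paper itself gives no proof here---it simply records that the arguments for fields in \cite{hut:rbl11} adapt without change to local rings---so there is nothing further to compare against.

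One simplification: you overestimate (5). It does not require unpacking the definitions of $\suss{i}{x}$ or invoking a specific instance of $S_{x,y}$. For $x \in \wn{A}$, combine the observation $\suss{i}{x^{-1}} = \an{-1}\suss{i}{x}$ (recorded just before Lemma~\ref{lem:sus1}) with the cocycle consequence $\an{x}\suss{i}{x^{-1}}=-\suss{i}{x}$ to obtain $\an{-x}\suss{i}{x} = -\suss{i}{x}$; then
\[
\suss{i}{x^2} \;=\; (1+\an{x})\suss{i}{x} \;=\; (1-\an{-1})\suss{i}{x} \;=\; -\pf{-1}\suss{i}{x} \;=\; -\pf{x}\suss{i}{-1}
\]
by (1). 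Your derivation of (4) from this case of (5) is fine (alternatively: the inversion formula gives $\suss{i}{-1}=\an{-1}\suss{i}{-1}$ directly, while the cocycle gives $\an{-1}\suss{i}{-1}=-\suss{i}{-1}$). The extension of (5) to $x\in U_1$ is then a short formal manipulation using (1), (3), (4), and the case already established on $\wn{A}$. So the ``one genuine obstacle'' you anticipate does not exist: once the cocycle identity is in hand, all nine items are purely formal.
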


\begin{cor}\label{cor:psi-1}
 For $i=1,2$ and $a\in \unitr{A}$ 
\[
\suss{i}{a}-\suss{i}{-a^{-1}}=\suss{i}{-1}.
\]
\end{cor}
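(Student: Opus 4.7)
The proof proposal is to derive the identity directly from the cocycle property of $\suss{i}{}$ together with properties (4) and (5) of Proposition~\ref{prop:cocycle}.

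The starting point is to apply the cocycle identity to the factorization $-1 = a \cdot (-a^{-1})$, which gives
\[
\suss{i}{-1} = \an{a}\suss{i}{-a^{-1}} + \suss{i}{a}.
\]
Multiplying both sides by $\an{a}$ (and using $\an{a}^2 = 1$ in $\sq{A}$) solves for $\suss{i}{-a^{-1}}$:
\[
\suss{i}{-a^{-1}} = \an{a}\suss{i}{-1} - \an{a}\suss{i}{a}.
\]

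Substituting this into $\suss{i}{a} - \suss{i}{-a^{-1}}$ yields
\[
\suss{i}{a} - \suss{i}{-a^{-1}} = (1 + \an{a})\suss{i}{a} - \an{a}\suss{i}{-1} = \suss{i}{a^2} - \an{a}\suss{i}{-1},
\]
where in the last step I use $\suss{i}{a^2} = \suss{i}{a} + \an{a}\suss{i}{a}$ (which follows from the cocycle identity with $x=y=a$).

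Finally, applying Proposition~\ref{prop:cocycle}(5) to rewrite $\suss{i}{a^2} = -\pf{a}\suss{i}{-1} = (1-\an{a})\suss{i}{-1}$, we obtain
\[
\suss{i}{a} - \suss{i}{-a^{-1}} = (1 - \an{a})\suss{i}{-1} - \an{a}\suss{i}{-1} = (1 - 2\an{a})\suss{i}{-1} = \suss{i}{-1},
\]
where the last equality uses $2\suss{i}{-1} = 0$ from Proposition~\ref{prop:cocycle}(4). There is no real obstacle here; this is a short algebraic manipulation of the cocycle relation combined with the two-torsion of $\suss{i}{-1}$.
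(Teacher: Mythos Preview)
Your proof is correct. You factor $-1 = a\cdot(-a^{-1})$, apply the cocycle relation, and then invoke Proposition~\ref{prop:cocycle}(5) and (4) to finish. Every step is valid.

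The paper takes a slightly shorter path: it factors the other way, writing $-a^{-1} = (-1)\cdot a^{-1}$, and uses the identity $\suss{i}{a^{-1}} = \an{-1}\suss{i}{a}$ (which holds for $a\in\wn{A}$ directly from the definitions and extends to all of $\unitr{A}$ via the cocycle relation). This immediately yields $\suss{i}{-a^{-1}} = \an{-1}\suss{i}{a^{-1}} + \suss{i}{-1} = \suss{i}{a} + \suss{i}{-1}$, and the result follows (using $2\suss{i}{-1}=0$). Your route trades this ready-made inversion identity for Proposition~\ref{prop:cocycle}(5), which is itself a consequence of the same cocycle machinery, so the two arguments are close cousins; the paper's is just one step more direct.
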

\begin{proof}
For $a\in \unitr{A}$, since $\an{-1}\suss{i}{a^{-1}}=\suss{i}{a}$ we have  
 \[
\suss{i}{-a^{-1}}=\suss{i}{-1\cdot a^{-1}}=\an{-1}\suss{i}{a^{-1}}+\suss{i}{-1}=\suss{i}{a}+\suss{i}{-1}.
\]
\end{proof}

\begin{lem}\label{lem:sus2}
 For $i=1,2$ and for all $x\in \unitr{A}$
\begin{enumerate}
\item $\lambda_1(\suss{i}{x})=-\pp{-1}\pf{x}=\pf{-x}\pf{x}\in \aug{A}^2$ 
\item $\lambda_2(\suss{i}{x})=(-x)\asymm x \in \asym{2}{\Z}{\unitr{A}}$.
\end{enumerate}
\end{lem}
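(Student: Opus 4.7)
The plan is to prove both identities by direct calculation from the definitions, handling first the case $x\in\wn{A}$ and then extending to $x\in U_1$ via the defining relation $\suss{i}{x}=\suss{i}{xw}-\an{x}\suss{i}{w}$. Two elementary identities do most of the work in the first case: from $\an{x^{-1}}=\an{x}$ we get $\pf{x^{-1}}=\pf{x}$, and from $1-x^{-1}=-(1-x)x^{-1}$ (valid since $x\in \wn{A}$) we get $\an{1-x^{-1}}=\an{-1}\an{1-x}\an{x}$, hence $\pf{1-x^{-1}}=\an{-1}\an{1-x}\an{x}-1$.

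For part (1) with $x\in\wn{A}$ and $i=1$, expand $\lambda_1(\suss{1}{x})=\pf{1-x}\pf{x}+\an{-1}\pf{1-x^{-1}}\pf{x}$ using the above; the cross term $\an{-1}\an{1-x}\an{x}\cdot\pf{x}$ combines with $\pf{1-x}\pf{x}$ and (using $\an{x^2}=1$) the whole expression collapses to $-(1+\an{-1})\pf{x}=-\pp{-1}\pf{x}$. For $i=2$, the same expansion applied to $\an{x^{-1}-1}\pf{1-x}\pf{x}+\an{1-x}\pf{1-x^{-1}}\pf{x}$, using $\an{x^{-1}-1}=\an{1-x}\an{x}$, produces cancellations of all mixed terms and again yields $-\pp{-1}\pf{x}$. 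The equality $-\pp{-1}\pf{x}=\pf{-x}\pf{x}$ is then a one-line check, expanding each side as a polynomial in the square classes $\an{x},\an{-1}$. For part (2), observe that since $\lambda_2$ factors through $\pb{A}=\rpb{A}_{\sq{A}}$, the scalars $\an{\cdot}$ disappear, so both $\suss{1}{x}$ and $\suss{2}{x}$ map to $[x]+[x^{-1}]\in\pb{A}$; then write $1-x^{-1}=(-1)\cdot x^{-1}\cdot(1-x)$ inside $\unitr{A}$ and expand $(1-x^{-1})\asymm x^{-1}$ by bilinearity. The $(1-x)\asymm x$ terms from $\lambda([x])$ and $\lambda([x^{-1}])$ cancel, $x^{-1}\asymm x^{-1}=x\asymm x$, and $(-1)\asymm x^{-1}=-((-1)\asymm x)=(-1)\asymm x$ because $-1$ has order two in $\unitr{A}$; what remains is $(-1)\asymm x+x\asymm x=(-x)\asymm x$.

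To extend to $x\in U_1$, apply $\lambda_i$ to the defining identity $\suss{i}{x}=\suss{i}{xw}-\an{x}\suss{i}{w}$ for some $w\in\wn{A}$ (and use that $\lambda_i$ is an $\sgr{A}$-homomorphism, noting that $\sq{A}$ acts trivially on $\asym{2}{\Z}{\unitr{A}}$). For $\lambda_1$ this gives $-\pp{-1}\bigl(\pf{xw}-\an{x}\pf{w}\bigr)$, and a short expansion shows $\pf{xw}-\an{x}\pf{w}=\pf{x}$, so the formula propagates. For $\lambda_2$ we get $(-xw)\asymm(xw)-(-w)\asymm w$; expanding by bilinearity, the $w\asymm x+x\asymm w=0$ terms vanish, the $w\asymm w$ and $(-1)\asymm w$ terms cancel, and one is left with $(-1)\asymm x+x\asymm x=(-x)\asymm x$, independent of the choice of $w$.

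The computation is routine, so no real obstacle is expected; the only points requiring care are keeping straight that $\asym{2}{\Z}{\unitr{A}}$ is the \emph{antisymmetric} tensor square (so $a\asymm a$ is $2$-torsion but generally nonzero) and avoiding confusion between the multiplicative group operation in $\unitr{A}$ and the additive structure on $\asym{2}{\Z}{\unitr{A}}$, particularly when handling the element $-1$ and inverses such as $x^{-1}\asymm x^{-1}=x\asymm x$.
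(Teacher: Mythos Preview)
Your proof is correct and follows essentially the same approach as the paper: direct computation for $x\in\wn{A}$ followed by extension to $x\in U_1$ via the identity $\pf{xw}-\an{x}\pf{w}=\pf{x}$ in $\sgr{A}$ and $(-xw)\asymm(xw)-(-w)\asymm w=(-x)\asymm x$ in $\asym{2}{\Z}{\unitr{A}}$. The only difference is that the paper defers the $x\in\wn{A}$ case of part~(1) to \cite[Lemma 3.3]{hut:rbl11}, whereas you carry out the explicit expansion; your computations for both $i=1$ and $i=2$ are correct.
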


\begin{proof}
\begin{enumerate}
\item For $x\in \wn{A}$, this is a straightforward calculation given in \cite[Lemma 3.3]{hut:rbl11}. For $x\in 
U_1$ it follows from the identity $\pf{xw}-\an{x}\pf{w}=\pf{x}$ in $\sgr{A}$.
\item Since $\unitr{A}$ acts trivially on $\asym{2}{\Z}{\unitr{A}}$, we have for any $x\in \wn{A}$ 
\begin{eqnarray*}
\lambda_2(\suss{1}{x})=\lambda_2(\suss{2}{x})&=&\lambda_2(\gpb{x})+\lambda_2(\gpb{x^{-1}})\\
&=& (1-x)\asymm x + (1-x^{-1})\asymm x^{-1}\\
&=& (1-x)\asymm x -\left(\frac{1-x}{-x}\right)\asymm x \\
&=& (-x)\asymm x.
\end{eqnarray*}

On the other hand, if $x\in U_1$, the result follows from the identity 
\[
(-xw)\asymm (xw)-(-w)\asymm w = (-x)\asymm x 
\]
in $\asym{2}{\Z}{\unitr{A}}$.
\end{enumerate}
\end{proof}

Let $\ks{i}{A}$ denote the $\sgr{A}$-submodule of $\rpb{A}$ generated by the set 
$\{ \suss{i}{x}\ |\ x\in \unitr{A}\}$. 

\begin{lem} \label{lem:kf}
 Then for $i\in \{ 1,2\}$
\[
\lambda_1\left( \ks{i}{A}\right) = \pp{-1}(\aug{A})\subset \aug{A}^2
\]
and $\ker{\lambda_1\res{\ks{i}{A}}}$ is annihilated by $4$.
\end{lem}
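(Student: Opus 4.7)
The plan is: (a) the image statement is immediate from the $\sgr{A}$-linearity of $\lambda_1$ and Lemma \ref{lem:sus2}(1); (b) for the kernel statement, construct a ``universal'' $\sgr{A}$-linear lift $\bar\phi:\aug{A}\to\ks{i}{A}$ of $\pf{x}\mapsto 2\suss{i}{x}$, so that $\lambda_1\circ\bar\phi$ becomes multiplication by $-2\pp{-1}$, and then exploit $\pp{-1}\ppm{-1}=0$ together with the $2$-torsion identity $2\ppm{-1}\ks{i}{A}=0$.

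For the image, since $\ks{i}{A}$ is $\sgr{A}$-generated by $\{\suss{i}{x}\}_{x\in\unitr{A}}$, the $\sgr{A}$-linearity of $\lambda_1$ and Lemma \ref{lem:sus2}(1) give $\lambda_1(\ks{i}{A})=\pp{-1}\aug{A}$ at once (both sides are $\sgr{A}$-generated by the images $-\pp{-1}\pf{x}$).

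To construct $\bar\phi$, I invoke the universal property that $\sgr{A}$-linear maps $\aug{A}\to M$ into an $\sgr{A}$-module $M$ correspond to $1$-cocycles $c:\sq{A}\to M$ via $\pf{x}\leftrightarrow c(x)$. Proposition \ref{prop:cocycle}(9) shows that $x\mapsto 2\suss{i}{x}$ factors through $\sq{A}$, and the cocycle identity for $\suss{i}$ (Lemma \ref{lem:sus1}(1), extended to all of $\unitr{A}$) makes it a 1-cocycle. Hence $\bar\phi$ exists, is $\sgr{A}$-linear, surjects onto $2\ks{i}{A}$, and by Lemma \ref{lem:sus2}(1) satisfies $\lambda_1\circ\bar\phi = -2\pp{-1}$ as maps $\aug{A}\to\aug{A}^2$.

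The crux --- the step I expect to demand the most care --- is to establish
\[
2\ppm{-1}\cdot\suss{i}{x}=0\quad\text{for all }x\in\unitr{A}.
\]
Using $\an{-1}\suss{i}{x}=\suss{i}{x^{-1}}=-\an{x^{-1}}\suss{i}{x}$ (the first equality from the definition of $\suss{i}$, the second from the cocycle applied to $x\cdot x^{-1}=1$ together with $\suss{i}{1}=0$), the left-hand side rewrites as $2(1+\an{x})\suss{i}{x}=2\suss{i}{x^2}$, which vanishes by Proposition \ref{prop:cocycle}(6). Since the $\suss{i}{x}$ generate $\ks{i}{A}$ as an $\sgr{A}$-module, this gives $2\ppm{-1}\ks{i}{A}=0$. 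Finally, for $z\in\ks{i}{A}$ with $\lambda_1(z)=0$, write $2z=\bar\phi(r)$ for some $r\in\aug{A}$; the vanishing $-2\pp{-1}r=\lambda_1\bar\phi(r)=\lambda_1(2z)=0$ together with torsion-freeness of $\aug{A}\subset\sgr{A}$ forces $\pp{-1}r=0$, so $2r=\pp{-1}r+\ppm{-1}r=\ppm{-1}r$ and
\[
4z=\bar\phi(2r)=\ppm{-1}\bar\phi(r)=2\ppm{-1}z=0.
\]
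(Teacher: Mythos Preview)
Your proof is correct and supplies precisely the details that the paper omits by citing \cite[Lemma 3.3]{hut:rbl11}. The strategy---lift the doubled cocycle to an $\sgr{A}$-linear map $\bar\phi:\aug{A}\to\ks{i}{A}$, use torsion-freeness of $\sgr{A}$ to deduce $\pp{-1}r=0$, and kill the remaining $\ppm{-1}$-part via $2\suss{i}{x^2}=0$---is exactly the intended argument.

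One small remark: the identity $\an{-1}\suss{i}{x}=\suss{i}{x^{-1}}$ is stated in the paper only for $x\in\wn{A}$ (directly from the definitions). For $x\in U_1$ it still holds, but needs a line: writing $\suss{i}{x^{-1}}=\suss{i}{(xw)^{-1}}-\an{x^{-1}}\suss{i}{w^{-1}}$ for any $w\in\wn{A}$ and applying the $\wn{A}$-case to $(xw)^{-1}$ and $w^{-1}$ gives $\suss{i}{x^{-1}}=\an{-1}\bigl(\suss{i}{xw}-\an{x}\suss{i}{w}\bigr)=\an{-1}\suss{i}{x}$. Alternatively you can bypass this entirely: the cocycle identity applied to $(-1)\cdot x=x\cdot(-1)$ gives $\ppm{-1}\suss{i}{x}=-\pf{x}\suss{i}{-1}$, and then $2\ppm{-1}\suss{i}{x}=0$ follows immediately from $2\suss{i}{-1}=0$ (Proposition~\ref{prop:cocycle}(4)).
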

\begin{proof}
The first statement follows from Lemma \ref{lem:sus2}

For the second, the proof of Lemma 3.3 in \cite{hut:rbl11} applies without change.
\end{proof}

Let 
\[
\qrpb{A}:=\rpb{A}/\ks{1}{A} 
\] 

By Lemma \ref{lem:sus2}, 
\[
\bw(\ks{1}{A})= \pp{-1}\aug{A}\oplus T(A)\subset \aug{A}^2\oplus \asym{2}{\Z}{\unitr{A}}
\]
where $T(A)$ is the subgroup of $\asym{2}{\Z}{\unitr{A}}$ generated by the elements 
$(-x)\asymm x$. Note that $T(A)$ is annihilated by $2$.

 We denote 
$\asym{2}{\Z}{\unitr{A}}/T(A)$ by  $\qasym{2}{\Z}{\unitr{A}}$.
Then the  $\sgr{A}$-homomorphism
\[
\rbw= (\tilde{\lambda}_1,\tilde{\lambda}_2):
\qrpb{A}\to \frac{\aug{A}^2}{\pp{-1}\aug{A}}\oplus \qasym{2}{\Z}{\unitr{A}}
\]
is well-defined and we set $\qrpbker{A}:=\ker{\tilde{\lambda}_1}$ and $\qrbl{A}:=\ker{\rbw}$.

\begin{cor}\label{cor:qrbl}
The natural maps $\rpbker{A}\to \qrpbker{A}$ and 
$\rbl{A}\to\qrbl{A}$ are surjective with kernel annihilated by $4$. In particular,
\[
\zhalf{\rpbker{A}}=\zhalf{\qrpbker{A}}\quad \mbox{ and }\quad \zhalf{\rbl{A}}=\zhalf{\qrbl{A}}.
\]
\end{cor}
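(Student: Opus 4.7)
The plan is to chase the diagram defining $\qrbl{A}$. Consider the commutative diagram of $\sgr{A}$-modules
\[
\xymatrix{
0\ar[r] & \ks{1}{A}\ar[r]\ar[d]^{\bw|_{\ks{1}{A}}} & \rpb{A}\ar[r]\ar[d]^{\bw} & \qrpb{A}\ar[r]\ar[d]^{\rbw} & 0\\
0\ar[r] & \bw(\ks{1}{A})\ar[r] & \aug{A}^2\oplus\asym{2}{\Z}{\unitr{A}}\ar[r] & \dfrac{\aug{A}^2\oplus\asym{2}{\Z}{\unitr{A}}}{\bw(\ks{1}{A})}\ar[r] & 0
}
\]
Both rows are exact and the middle vertical map is surjective onto its image by construction. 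An application of the snake lemma then identifies the kernel of $\rbl{A}\to\qrbl{A}$ with $\ker(\bw|_{\ks{1}{A}})$ and yields the surjectivity of $\rbl{A}\to\qrbl{A}$ once one checks that $\bw(\ks{1}{A})$ is exactly the image I claim — this latter fact is literally the definition.

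For surjectivity one can argue more directly: given a class $\bar{z}\in\qrbl{A}$ represented by $z\in\rpb{A}$ with $\bw(z)\in\bw(\ks{1}{A})$, pick $y\in\ks{1}{A}$ with $\bw(y)=\bw(z)$; then $z-y\in\rbl{A}$ lifts $\bar z$. So the only real content lies in bounding the kernel.

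The kernel is $\rbl{A}\cap\ks{1}{A}=\ker(\bw|_{\ks{1}{A}})\subseteq\ker(\lambda_1|_{\ks{1}{A}})$, and by Lemma \ref{lem:kf} this last group is annihilated by $4$. Hence the kernel of $\rbl{A}\to\qrbl{A}$ is annihilated by $4$, which is the main assertion. Inverting $2$ kills everything annihilated by $4$, so tensoring the short exact sequence $0\to\ker\to\rbl{A}\to\qrbl{A}\to 0$ with $\zhalf{\Z}$ gives the isomorphism $\zhalf{\rbl{A}}\cong\zhalf{\qrbl{A}}$.

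The main (and only) obstacle is the kernel bound, and that has already been done in Lemma \ref{lem:kf}; everything else is formal diagram chasing. No new computations should be required.
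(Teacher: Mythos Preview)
Your argument is correct and is essentially the same as the paper's: both identify $\qrbl{A}$ with $\rbl{A}/(\rbl{A}\cap\ks{1}{A})$, observe that $\rbl{A}\cap\ks{1}{A}=\ker(\bw|_{\ks{1}{A}})\subset\ker(\lambda_1|_{\ks{1}{A}})$, and invoke Lemma~\ref{lem:kf} for the $4$-torsion bound. The paper simply asserts the isomorphism $\qrbl{A}\cong\rbl{A}/(\rbl{A}\cap\ks{1}{A})$ ``from the definitions'' without writing out the lifting argument you gave for surjectivity, but there is no substantive difference.
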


\begin{proof}
From the definitions,  
\[
\qrpbker{A}\cong \frac{\rpbker{A}}{\rpbker{A}\cap \ks{1}{A}}\quad \mbox{ and }\quad 
\qrbl{A}\cong \frac{\rbl{A}}{\rbl{A}\cap \ks{1}{A}}
\]
and the modules $\rpbker{A}\cap \ks{1}{A}$ and $\rbl{A}\cap \ks{1}{A}$ are both contained in
$\ker{ \lambda_1|_{\ks{1}{A}}}$ which is annihilated by $4$. 
\end{proof}

\subsection{The constant $\cconst{A}$}

In \cite{sus:bloch}, Suslin shows that, for an infinite field $F$,
 the elements $\gpb{x}+\gpb{1-x}\in \bl{F}\subset \pb{F}$ are independent 
of $x$ and that the resulting constant, $\pbconst{F}$, has order dividing $6$. Furthermore Suslin shows 
that $\pbconst{\R}$ has exact order $6$.  

In \cite[Lemma 3.5]{hut:rbl11} it is shown that the elements 
\[
C(x)=\gpb{x}+\an{-1}\gpb{1-x}+\pf{1-x}\suss{1}{x}\in\rbl{F}
\]
are constant (for a field with at least $4$ elements) and have order dividing $6$. In fact the proof given there 
extends without alteration to the case of local rings:

\begin{lem}\label{lem:constbl}
Let $A$ be a local ring whose residue field has at least $4$ elements. For all $a, b \in \wn{A}$ 
\[
C(a):=\gpb{a}+\an{-1}\gpb{1-a}+\pf{1-a}\suss{1}{a}\in \rbl{A}\mbox{ and } C(a)=C(b).
\]
\end{lem}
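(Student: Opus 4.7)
There are two assertions: (i) $C(a)\in\rbl{A}$, and (ii) $C(a)=C(b)$ for all $a,b\in\wn{A}$. The first is a bookkeeping calculation of $\lambda_1$ and $\lambda_2$; the second is the substantive step, requiring careful use of the defining relation $S_{x,y}$ together with the cocycle identities for $\suss{1}{\cdot}$.

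\textbf{Step 1 (membership in $\rbl{A}$).} I would apply $\lambda_1$ term by term. Since $\lambda_1(\gpb{x})=\pf{1-x}\pf{x}$, one has $\lambda_1(\gpb{a})=\pf{1-a}\pf{a}$ and $\lambda_1(\an{-1}\gpb{1-a})=\an{-1}\pf{a}\pf{1-a}$. By Lemma \ref{lem:sus2}, $\lambda_1(\suss{1}{a})=-\pp{-1}\pf{a}=-(1+\an{-1})\pf{a}$, so $\lambda_1(\pf{1-a}\suss{1}{a})=-(1+\an{-1})\pf{1-a}\pf{a}$. Summing gives $\lambda_1(C(a))=0$. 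For $\lambda_2$, the action of $\sgr{A}$ on $\asym{2}{\Z}{\unitr{A}}$ is trivial, so $\pf{1-a}$ kills the third summand, while $\lambda_2(\gpb{a})+\an{-1}\lambda_2(\gpb{1-a})=(1-a)\asymm a + a\asymm(1-a)=0$ by antisymmetry. Hence $C(a)\in\rbl{A}$.

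\textbf{Step 2 (invariance under specific substitutions).} The plan is to show that, for all $x,y\in\wn{A}$ with $y/x\in\wn{A}$, the relation $S_{x,y}$ implies $C(x)=C(y)$. Concretely, I would apply the substitution $x\mapsto 1-x$, $y\mapsto 1-y$ (plus cyclic variants) inside $S_{x,y}$ to obtain five instances of the relation whose alternating sum rewrites the difference $C(x)-C(y)$ in terms of $\sus{\cdot,\cdot}$-terms involving the three remaining cross-ratio arguments $y/x$, $(1-x^{-1})/(1-y^{-1})$ and $(1-x)/(1-y)$. The cocycle identity $\suss{1}{uv}=\an{u}\suss{1}{v}+\suss{1}{u}$ from Proposition \ref{prop:cocycle} then collapses the sum: each $\suss{1}{\cdot}$-contribution coming from the $\pf{1-\ast}\suss{1}{\ast}$ correction is chosen exactly to cancel the $\an{\ast}$-twists appearing in the five-term relation. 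The key algebraic inputs are the symmetry $\pf{u}\suss{1}{v}=\pf{v}\suss{1}{u}$ and $\an{-1}\pf{u}\suss{1}{v}=\pf{u}\suss{1}{v}$ from Proposition \ref{prop:cocycle}.

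\textbf{Step 3 (extension to arbitrary $a,b$).} The previous step gives an equivalence relation on $\wn{A}$ where $a\sim b$ when there is a chain $a=a_0,a_1,\ldots,a_n=b$ with $a_{i+1}/a_i\in\wn{A}$. Because the residue field $k$ has at least four elements, any two classes in $\wn{A}$ are connected by such a chain: given $a,b\in\wn{A}$, choose $c\in\wn{A}$ whose residue differs from both $\bar a$, $\bar b$ and from $\bar 1$ (three forbidden residues, and $|k|\geq 4$ leaves room), and then $c/a,c/b\in\wn{A}$ by reducing mod $\mathcal{M}$, giving $C(a)=C(c)=C(b)$.

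\textbf{Main obstacle.} Step 1 and Step 3 are routine; the heart of the proof is the algebraic identity in Step 2, where the precise combination of $\an{x}$-twisted $S_{x,y}$ relations and the cocycle rewrites for $\suss{1}{\cdot}$ must fit together with no leftover terms. The correction $\pf{1-a}\suss{1}{a}$ in the definition of $C(a)$ is calibrated exactly for this cancellation, and verifying it requires bookkeeping of five instances of the relation and three applications of the cocycle identity. This is the only step where the local-ring hypothesis (via $|k|\geq 4$) really intervenes, guaranteeing enough valid substitutions.
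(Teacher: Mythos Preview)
Your overall architecture is the right one and matches what the paper invokes (the paper itself does not prove this lemma but cites \cite[Lemma 3.5]{hut:rbl11}, noting that the argument carries over verbatim to local rings). Step~1 is a clean and correct verification.

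Two points deserve tightening. In Step~3, your residue count is off: for $c\in\wn{A}$ with $c/a,c/b\in\wn{A}$ you need $\bar c\notin\{0,1,\bar a,\bar b\}$, which is four forbidden values, not three, and could exhaust $k$ when $|k|=4$. The fix is immediate: if $\bar a\neq\bar b$ then $b/a\in\wn{A}$ already, so Step~2 applies directly; the intermediate $c$ is only needed when $\bar a=\bar b$, and then a single forbidden residue beyond $\{0,1\}$ leaves room since $|\wn{k}|\geq 2$.

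More substantively, Step~2 is where the content lies and you have only sketched it. The phrase ``five instances of the relation'' together with ``the substitution $x\mapsto 1-x$, $y\mapsto 1-y$ (plus cyclic variants)'' is not a proof: you have not said which relations are being combined with which coefficients, nor verified that the $\suss{1}{\cdot}$ corrections actually close up. (For instance, applying $S_{1-x,1-y}$ requires $(1-y)/(1-x)\in\wn{A}$, which does follow from $y/x\in\wn{A}$, but this should be checked.) The argument in \cite{hut:rbl11} does exactly this bookkeeping, and it is genuinely delicate: the correction term $\pf{1-a}\suss{1}{a}$ is calibrated so that, after expanding $C(x)-C(y)$ via a suitable pair of $S$-relations and repeatedly applying the cocycle identity $\suss{1}{uv}=\an{u}\suss{1}{v}+\suss{1}{u}$ together with $\pf{u}\suss{1}{v}=\pf{v}\suss{1}{u}$, every residual term cancels. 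Until you write this out, Step~2 remains a plan rather than a proof.
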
  

We denote this constant  by $\bconst{A}$ and we set $\cconst{A}:=2\bconst{A}$. Of course, $\bconst{A}$ 
maps to $\pbconst{A}$ under the natural map $\rpb{A}\to\pb{A}$. Similarly, we denote  the image 
of $\cconst{A}$ in $\pb{A}$ by $\pcconst{A}$. Thus, of course, $\pcconst{A}=2\pbconst{A}$ in $\pb{A}$. In fact, these 
elements lie in $\bl{A}$ by Lemma \ref{lem:constbl}.

Let $\Phi(X)$ denote the polynomial $X^2-X+1\in A[X]$.

\begin{lem}\label{lem:cconst}
Let $A$ be a local ring whose residue field has at least $4$ elements. 
\begin{enumerate}
\item $3\bconst{A}=\suss{1}{-1}$ and $6\bconst{A}=0$.
\item If $\Phi(X)$ has a root in $A$ then $\cconst{A}=0$ and $\bconst{A}=\suss{1}{-1}$. 
\end{enumerate}
\end{lem}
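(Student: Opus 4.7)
The plan is to prove the two parts somewhat independently, with part (2) being the more direct and, I expect, the easier of the two.

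For part (2), the key observation is that a root $\omega$ of $\Phi(X)=X^2-X+1$ satisfies $\omega^3=-1$ (from $(X+1)\Phi(X)=X^3+1$), and hence $\omega^{-1}=-\omega^2=1-\omega$. Substituting $a=\omega$ into the defining formula for $\bconst{A}$, the first two summands collapse by the very definition of $\suss{1}{}$:
\[
\gpb{\omega}+\an{-1}\gpb{1-\omega}=\gpb{\omega}+\an{-1}\gpb{\omega^{-1}}=\suss{1}{\omega},
\]
giving $\bconst{A}=(1+\pf{\omega^{-1}})\suss{1}{\omega}=\an{\omega}\suss{1}{\omega}$. Next I would unpack the cocycle relation $\suss{1}{\omega^3}=\suss{1}{-1}$ via the factorisation $\omega^3=\omega\cdot\omega^2$, combine with Prop.~\ref{prop:cocycle}(5) ($\suss{1}{\omega^2}=-\pf{\omega}\suss{1}{-1}$), and show that $\pf{\omega}\suss{1}{-1}=0$. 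This last identity reduces by Prop.~\ref{prop:cocycle}(1) to $\pf{-1}\suss{1}{-1}=0$, which is immediate from the $\an{-1}$-invariance $\an{-1}\suss{1}{-1}=\suss{1}{-1}$. With $\pf{\omega}\suss{1}{-1}=0$ in hand, one obtains $\suss{1}{\omega}=\an{\omega}\suss{1}{-1}=\suss{1}{-1}$, and therefore $\bconst{A}=\an{\omega}\suss{1}{-1}=\suss{1}{-1}$. The conclusion $\cconst{A}=2\bconst{A}=2\suss{1}{-1}=0$ then follows from Prop.~\ref{prop:cocycle}(4).

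For part (1), I plan to adapt the classical Dupont--Sah argument showing $6\pbconst{F}=0$ in the scissors congruence group of a field, and to track carefully the $\sgr{A}$-module corrections introduced by the $\suss{1}{}$-terms. Concretely, I would evaluate the defining formula for $\bconst{A}$ at three values of $a$ forming an orbit under the order-$3$ anharmonic substitution $a\mapsto (a-1)/a$, so that the six scissors-congruence symbols exhaust a single anharmonic orbit, and then sum. By Lemma~\ref{lem:constbl} the left-hand side is $3\bconst{A}$. On the right-hand side, a single five-term relation $S_{x,y}$ applied to a carefully chosen pair (e.g.\ $x=a$, $y=1-a$) eliminates the scissors-congruence contribution, and the residue is a sum of $\suss{1}{}$-cocycle terms which I would collapse using Prop.~\ref{prop:cocycle}: part (1) for the symmetry $\pf{x}\suss{1}{y}=\pf{y}\suss{1}{x}$, part (7) to kill any $\pf{x}\pf{y}\suss{1}{-1}$, and part (8) for $\an{-1}$-invariance of products $\pf{x}\suss{1}{y}$. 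The outcome should be exactly $3\bconst{A}=\suss{1}{-1}$, whence $6\bconst{A}=2\suss{1}{-1}=0$ by Prop.~\ref{prop:cocycle}(4).

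The main obstacle is part (1): tracking all the $\an{\cdot}$-twists when summing the three cyclic specialisations and verifying that the combinatorics of Prop.~\ref{prop:cocycle} conspire to leave precisely $\suss{1}{-1}$ as the residue (as opposed to some element of the form $\pf{y}\suss{1}{x}+\suss{1}{-1}$ which would only force a weaker multiple of $\bconst{A}$ to be controlled). Part (2) is essentially a direct calculation once the coincidence $1-\omega=\omega^{-1}$ is spotted; the only real subtlety there is the reduction $\pf{\omega}\suss{1}{-1}\mapsto \pf{-1}\suss{1}{-1}$.
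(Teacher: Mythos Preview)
Your overall strategy matches the paper's closely, but there are two points where your plan diverges from what actually happens.

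\textbf{Part (1).} The paper does exactly what you propose: it sums $C(a)+C(1-a^{-1})+C\bigl(\tfrac{1}{1-a}\bigr)$ over the order-$3$ anharmonic orbit. However, no five-term relation $S_{x,y}$ is ever invoked. The six $\gpb{\cdot}$ symbols appearing (with their $\an{-1}$ twists) regroup \emph{directly} into $\suss{1}{a}+\suss{1}{1-a^{-1}}+\suss{1}{\tfrac{1}{1-a}}$, because the second summand of $C(b)$ is always $\an{-1}\gpb{1-b}$ and $1-b$ is the inverse of the next orbit element. After absorbing the three $\pf{\cdot}\suss{1}{\cdot}$ correction terms one has $\an{\tfrac{1}{1-a}}\suss{1}{a}+\an{a}\suss{1}{1-a^{-1}}+\an{1-a^{-1}}\suss{1}{\tfrac{1}{1-a}}$; each summand is rewritten via the cocycle identity as a difference $\suss{1}{xy}-\suss{1}{x}$, and each resulting pair has the form $\suss{1}{b}-\suss{1}{-b^{-1}}=\suss{1}{-1}$ (this is Corollary~\ref{cor:psi-1}). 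So the residue is $3\suss{1}{-1}=\suss{1}{-1}$. Your worry about stray terms of the form $\pf{y}\suss{1}{x}$ does not materialise; the combinatorics close up cleanly without Prop.~\ref{prop:cocycle}(7) or (8).

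\textbf{Part (2).} Your computation $\bconst{A}=\an{\omega}\suss{1}{\omega}$ is correct, and your route via $\suss{1}{\omega^3}=\suss{1}{-1}$ is a legitimate alternative to the paper's. But your justification of $\pf{\omega}\suss{1}{-1}=0$ has a gap: Prop.~\ref{prop:cocycle}(1) gives $\pf{\omega}\suss{1}{-1}=\pf{-1}\suss{1}{\omega}$, not $\pf{-1}\suss{1}{-1}$. The correct reason is that $\omega=-\omega^4=-(\omega^2)^2$ is a negative square, so $\an{\omega}=\an{-1}$ and hence $\pf{\omega}=\pf{-1}$ \emph{as elements of} $\sgr{A}$; then indeed $\pf{\omega}\suss{1}{-1}=\pf{-1}\suss{1}{-1}=0$. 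With this fix your argument goes through, and in fact has the pleasant feature of not relying on part (1). The paper instead uses $\omega=-(\omega^2)^2$ to get $2\suss{1}{\omega}=0$ directly (via Prop.~\ref{prop:cocycle}(4),(6)), deduces $\cconst{A}=2\bconst{A}=0$, and then appeals to part (1) for $\bconst{A}=3\bconst{A}=\suss{1}{-1}$.
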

\begin{proof}
\begin{enumerate}
\item Let $a\in \wn{A}$. Then 
\begin{eqnarray*}
3\bconst{A}&=& C(a)+C(1-a^{-1})+C\left( \frac{1}{1-a}\right)\\
&=& \gpb{a}+\an{-1}\gpb{1-a}+\pf{\frac{1}{1-a}}\suss{1}{a}\\
&+& \gpb{1-a^{-1}}+\an{-1}\gpb{a^{-1}}+\pf{a}\suss{1}{1-a^{-1}}\\
&+& \gpb{\frac{1}{1-a}}+\an{-1}\gpb{\frac{1}{1-a^{-1}}}+\pf{1-a^{-1}}\suss{1}{\frac{1}{1-a}}\\
&=& \suss{1}{a}+\suss{1}{1-a^{-1}}+\suss{1}{\frac{1}{1-a}}\\
&+& \pf{\frac{1}{1-a}}\suss{1}{a}+\pf{a}\suss{1}{1-a^{-1}}+\pf{1-a^{-1}}\suss{1}{\frac{1}{1-a}}\\
&=& \an{\frac{1}{1-a}}\suss{1}{a}+\an{a}\suss{1}{1-a^{-1}}+\an{1-a^{-1}}\suss{1}{\frac{1}{1-a}}\\
&=& \suss{1}{\frac{1}{a^{-1}-1}}-\suss{1}{\frac{1}{1-a}}+\suss{1}{a-1}-\suss{1}{a}+\suss{1}{-\frac{1}{a}}-
\suss{1}{1-a^{-1}}\\
&=& 3\suss{1}{-1}=\suss{1}{-1}
\end{eqnarray*}
since for any $b$, we have 
\[
\suss{1}{b}-\suss{1}{-\frac{1}{b}}= \suss{1}{-1}
\]
by Corollary \ref{cor:psi-1}. 
\item  Let $a\in A$ with $\Phi(a)=0$. Then $1-a=a^{-1}$ and $a\in \wn{A}$. Furthermore, from $0=(a+1)\Phi(a)$ it follows 
that $a^3=-1$ and hence $a=-b^2$ where $b=a^2$. Thus $\suss{1}{a}=\suss{1}{-1}+\suss{1}{b^2}$ and hence $2\suss{1}{a}=0$. 
But 
\[
\bconst{A}=\gpb{a}+\an{-1}\gpb{1-a}-\pf{1-a}\suss{1}{(1-a)^{-1}}=\suss{1}{a}-\pf{a}\suss{1}{a}=-\an{a}\suss{1}{a}
\]
and hence $0=2\bconst{A}=\cconst{A}$. Thus also $\bconst{A}=3\bconst{A}=\suss{1}{-1}$.
\end{enumerate}
\end{proof}

From \cite{hut:rbl11} we also have the following identification of the element $\cconst{A}$:
\begin{lem}
For any $a\in \wn{A}$ we have 
\[
\cconst{A}=\gpb{a}+\an{-1}\gpb{\frac{1}{1-a^{-1}}}-\suss{1}{\frac{1}{1-a}}.
\]
\end{lem}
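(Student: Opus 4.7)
My plan is to prove the identity by evaluating $\bconst{A}$ at two related points and summing, exploiting the constancy of $C(\,\cdot\,)$ from Lemma~\ref{lem:constbl}. Set $c=\frac{1}{1-a}$ (which lies in $\wn{A}$ because $a\in\wn{A}$ forces both $1-a$ and $-a/(1-a)=1-c$ to be units) and $b=1-c=\frac{1}{1-a^{-1}}$. Then Lemma~\ref{lem:constbl} gives $\bconst{A}=C(a)=C(c)$, and adding yields
\[
\cconst{A}=\gpb{a}+\an{-1}\gpb{1-a}+\pf{1-a}\suss{1}{a}+\gpb{c}+\an{-1}\gpb{b}+\pf{b}\suss{1}{c}.
\]

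Next I exploit the identity $c^{-1}=1-a$: by the very definition of $\suss{1}{c}$ we have $\suss{1}{c}=\gpb{c}+\an{-1}\gpb{c^{-1}}=\gpb{c}+\an{-1}\gpb{1-a}$, so the two middle terms on the right-hand side above collapse into a single copy of $\suss{1}{c}$. Regrouping (and writing $2+\pf{b}=\pp{b}$ to absorb the extra $\suss{1}{c}$ that appears), I obtain
\[
\cconst{A}=\bigl(\gpb{a}+\an{-1}\gpb{b}-\suss{1}{c}\bigr)+\bigl(\pp{b}\suss{1}{c}+\pf{1-a}\suss{1}{a}\bigr),
\]
so the claimed identity reduces to showing that the second bracketed expression vanishes.

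For that final vanishing, I will substitute $\suss{1}{c}=\an{-1}\suss{1}{1-a}$ (from $c=(1-a)^{-1}$ together with $\suss{i}{x^{-1}}=\an{-1}\suss{i}{x}$) and $\pf{1-a}\suss{1}{a}=\pf{a}\suss{1}{1-a}$ (Proposition~\ref{prop:cocycle}(1)), reducing the bracket to $(\an{-1}\pp{b}+\pf{a})\suss{1}{1-a}$. Using $\an{-b}=\an{a}\an{1-a}$ in $\sq{A}$, the coefficient simplifies to $\pf{-1}+\an{a}\pp{1-a}$. The desired cancellation then follows by applying Proposition~\ref{prop:cocycle}(5) to rewrite $\pp{1-a}\suss{1}{1-a}=\suss{1}{(1-a)^2}=-\pf{1-a}\suss{1}{-1}$, Proposition~\ref{prop:cocycle}(7) to eliminate the $\pf{a}\pf{1-a}\suss{1}{-1}$ term arising from $\an{a}=1+\pf{a}$, and Proposition~\ref{prop:cocycle}(1) to match $\pf{1-a}\suss{1}{-1}=\pf{-1}\suss{1}{1-a}$.

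The main obstacle is choosing the correct auxiliary point. The choice $c=\frac{1}{1-a}$ is dictated by the requirement that $c^{-1}=1-a$, which is precisely what triggers the collapse of the two middle $\gpb{\,\cdot\,}$ terms into a single $\suss{1}$-term; once this observation is made, every remaining manipulation is a mechanical application of the cocycle and annihilation identities of Proposition~\ref{prop:cocycle}.
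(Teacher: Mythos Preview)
Your argument is correct. The key identification $b=1-c=\tfrac{1}{1-a^{-1}}$ and the collapse $\gpb{c}+\an{-1}\gpb{1-a}=\suss{1}{c}$ work exactly as you say, and the remaining bracket $(\pf{-1}+\an{a}\pp{1-a})\suss{1}{1-a}$ vanishes by the chain of identities from Proposition~\ref{prop:cocycle} that you outline: $\pp{1-a}\suss{1}{1-a}=\suss{1}{(1-a)^2}=-\pf{1-a}\suss{1}{-1}$, then $\an{a}=1+\pf{a}$ together with $\pf{a}\pf{1-a}\suss{1}{-1}=0$ reduces this to $-\pf{1-a}\suss{1}{-1}$, which cancels $\pf{-1}\suss{1}{1-a}=\pf{1-a}\suss{1}{-1}$.

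The paper itself does not supply a proof of this lemma; it merely imports the identity from \cite{hut:rbl11}. Your derivation is therefore a self-contained replacement for that external reference, and it uses only the constancy of $C(\cdot)$ and the cocycle identities already established in the present paper.
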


\begin{prop}[\cite{hut:arxivh3sl2dv}]\label{prop:cconstf} 
For a field $F$ we have $\cconst{F}=0$ if and only if $\Phi(X)$ has
a root in $F$. 
\end{prop}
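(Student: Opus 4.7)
The forward implication ``$\Phi$ has a root in $F \Rightarrow \cconst{F}=0$'' is exactly Lemma~\ref{lem:cconst}(2) specialized to $A=F$. For the converse, I will argue the contrapositive: assuming $\Phi(X)=X^2-X+1$ has no root in $F$, I show $\cconst{F}\neq 0$ in $\rbl{F}$.

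The first step is to reduce to the unrefined setting. Under the natural surjection $\rpb{F}\twoheadrightarrow \pb{F}$ (which sends $\rbl{F}$ onto $\bl{F}$), the element $\cconst{F}$ maps to $\pcconst{F}=2\pbconst{F}\in\bl{F}$, because the augmentation piece $\pf{1-a}\suss{1}{a}$ dies in $\pb{F}$ and $\an{-1}$ acts trivially. Hence it suffices to prove $\pcconst{F}\neq 0$ in $\pb{F}$. Note that by Lemma~\ref{lem:cconst}(1), $\pcconst{F}$ is $3$-torsion (since $3\pbconst{F}$ is the image of $\suss{1}{-1}\in\rpb{F}$, and $2\suss{1}{-1}=0$).

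The second step is to invoke Suslin's classical bound \cite{sus:bloch}: $\pbconst{F}$ has order dividing $6$ in $\pb{F}$, so $\pcconst{F}$ is $3$-torsion, and the assertion to prove is exactly that the order of $\pbconst{F}$ is \emph{greater} than $2$ whenever $F$ does not contain a primitive $6$th root of unity. In characteristic $3$ the polynomial $\Phi(X)=(X+1)^2$ has the root $-1$, so the claim is vacuous; in characteristic $\neq 3$, $\Phi$ having no root in $F$ is equivalent to $\zeta_3\notin F$. The nonvanishing of $\pcconst{F}$ in this regime is detected via the Suslin exact sequence
\[
0\to \widetilde{\Tor{\mu_F}{\mu_F}}\to \kind{F}\to \bl{F}\to 0,
\]
lifting $\pcconst{F}$ to a canonical $3$-torsion class in $\kind{F}$. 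Via the Merkurjev--Suslin / Beilinson--Lichtenbaum identification of $\ptor{\kind{F}}{3}$ with a Galois cohomology group of $\mu_3^{\otimes 2}$ (and ultimately with the Kummer class of $F(\zeta_3)/F$), this class is nonzero precisely when $\zeta_3\notin F$. Since $\pbconst{F}$ is, up to torsion, the canonical cyclotomic class (the cross-ratio calculation $\gpb{\zeta_3}+\gpb{1-\zeta_3}=\gpb{\zeta_3}+\gpb{-\zeta_3^2}$ over $F(\zeta_3)$ identifies it there), descent from $F(\zeta_3)$ to $F$ yields the desired detection.

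The main obstacle is the identification of $\pcconst{F}$ with the canonical cyclotomic $3$-torsion class in $\kind{F}$; once this compatibility is established (by working first over the cyclotomic extension, where both sides can be computed directly, and then descending by functoriality), the conclusion is immediate from Kummer theory. An alternative, more elementary route, available when $F$ admits an embedding into $\C$, is to use the Bloch--Wigner dilogarithm to compute the image of $\pcconst{F}$ in $\R/(\pi^2/3)\Z$ explicitly and observe it is nontrivial; this handles the geometric cases and the general case follows by specialization via Theorem~\ref{thm:spec}.
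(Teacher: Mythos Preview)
Your reduction to showing $\pcconst{F}\neq 0$ in $\bl{F}$ when $\zeta_3\notin F$ (and $\mathrm{char}\,F\neq 3$) is correct and matches the paper's setup. From that point on, however, your argument is only a sketch, and the key step is not actually carried out.

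You propose to lift $\pcconst{F}$ to a canonical $3$-torsion class in $\kind{F}$ and identify it with a Kummer-type cyclotomic class via Merkurjev--Suslin/Beilinson--Lichtenbaum. You yourself flag this identification as ``the main obstacle,'' and your suggested resolution (compute over $F(\zeta_3)$ and descend) does not work: over $F(\zeta_3)$ one has $\pcconst{F(\zeta_3)}=0$ by the `if' direction (indeed Lemma~\ref{lem:cconst}(2) gives $\bconst{F(\zeta_3)}=\suss{1}{-1}$, a $2$-torsion element), so there is nothing nonzero to identify there, and functoriality from a vanishing class yields no information about $\pcconst{F}$. Your fallback dilogarithm argument handles only subfields of $\C$, and the appeal to Theorem~\ref{thm:spec} to cover the general case is misplaced: that theorem specializes \emph{from} a valued field \emph{to} its residue field, not the other way, so it cannot propagate nonvanishing upward to an arbitrary $F$.

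The paper takes a different and more elementary route. It reduces to the prime subfield $k\subset F$: for $k=\Q$ one has $\pcconst{\Q}\neq 0$ via the embedding $\Q\hookrightarrow\R$ and Suslin's computation that $\pbconst{\R}$ has order $6$; for $k=\F{p}$ the order of $\pcconst{\F{p}}$ is known by direct computation. Then, using the Suslin exact sequence for both $k$ and $F$ together with Levine's injectivity $\kind{k}\hookrightarrow\kind{F}$, a diagram chase shows that the $3$-torsion element $\pcconst{k}$ can die under $\bl{k}\to\bl{F}$ only if the $3$-part of $\widetilde{\Tor{\mu_F}{\mu_F}}$ is nontrivial, i.e.\ only if $\zeta_3\in F$. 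Hence $\pcconst{F}\neq 0$ whenever $\zeta_3\notin F$. This avoids entirely the delicate identification you were aiming for.
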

\begin{cor}\label{cor:pcconst}
For any field $F$, $\cconst{F}=0$ if and only if $\pcconst{F}=0$, and the map 
$\rbl{F}\to\bl{F}$ induces an isomorphism $\Z\cdot\cconst{F}\cong\Z\cdot \pcconst{F}$.
\end{cor}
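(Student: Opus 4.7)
The plan is to observe that the corollary is essentially a repackaging of the preceding Proposition, combined with the order information for $\bconst{F}$ from Lemma \ref{lem:cconst} (1).

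First I would note that the forward implication $\cconst{F}=0\Rightarrow \pcconst{F}=0$ is automatic, since $\bconst{F}\mapsto \pbconst{F}$ under the map $\rbl{F}\to\bl{F}$ (this was recorded just after Lemma \ref{lem:constbl}), and hence $\cconst{F}=2\bconst{F}\mapsto 2\pbconst{F}=\pcconst{F}$. For the converse, I would revisit the proof of the preceding Proposition: the case $\mathrm{char}(F)=3$ is immediate because $-1$ is a root of $\Phi$, and in the remaining cases the diagram chase using the injection $\kind{k}\hookrightarrow\kind{F}$ (where $k$ is the prime subfield) together with the computation that $\pcconst{\Q}$ and $\pcconst{\F{p}}$ (for $p\not\equiv -1\pmod{3}$) have exact order $3$ shows that $\pcconst{F}=0$ forces $\zeta_3\in F$. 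But by Lemma \ref{lem:cconst} (2), $\zeta_3\in F$ (equivalently, $\Phi$ has a root in $F$) implies $\cconst{F}=0$, so the two vanishings are equivalent.

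To deduce the isomorphism $\Z\cdot\cconst{F}\cong\Z\cdot\pcconst{F}$, I would use Lemma \ref{lem:cconst} (1): $6\bconst{F}=0$, so $3\cconst{F}=0$, and hence $\Z\cdot\cconst{F}$ is cyclic of order $1$ or $3$; the same order restriction applies to $\Z\cdot\pcconst{F}$. The induced homomorphism between them is surjective by construction, and by the equivalence in the first paragraph its kernel is trivial (if the target is nonzero then both sides have order $3$, and a surjection $\Z/3\to\Z/3$ is an isomorphism; if the target is zero then so is the source).

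There is no real obstacle here; the only subtle point is to observe that what the Proposition's proof actually establishes is the stronger equivalence $\pcconst{F}=0\iff \Phi$ has a root in $F$, from which the corollary falls out immediately.
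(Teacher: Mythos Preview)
Your proposal is correct and matches the paper's intended (unwritten) proof: the corollary is immediate from the preceding Proposition once one notices that its proof actually establishes the stronger statement $\pcconst{F}=0\iff\Phi$ has a root in $F$, and the order-$3$ bound then forces the surjection $\Z\cdot\cconst{F}\to\Z\cdot\pcconst{F}$ to be an isomorphism. One small slip: in your recap of the Proposition's proof, the condition should read $p\equiv -1\pmod{3}$ (not $p\not\equiv -1\pmod{3}$) for $\pcconst{\F{p}}$ to have exact order $3$.
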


\begin{cor} \label{cor:pcconsta}
Let $A$ be a local ring with residue field $k$. Suppose either that $U_{1,A}=U_{1,A}^2$ and that 
$\mathrm{char}(k)\not=2,3$ or that $\mathrm{char}(k)=2$ and $A$ is henselian. 
Then the functorial map $\rbl{A}\to \rbl{k}$ induces an isomorphism 
$\Z\cdot \cconst{A}\cong \Z\cdot \cconst{k}$. 
\end{cor}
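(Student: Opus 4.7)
The plan is to reduce the corollary to a lifting question about the polynomial $\Phi(X)=X^2-X+1$ and then to handle the two characteristic cases separately.

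First I would note that the functorial map $\rpb{A}\to\rpb{k}$ sends $\bconst{A}\mapsto\bconst{k}$, and hence $\cconst{A}\mapsto\cconst{k}$, because the defining expression $\gpb{a}+\an{-1}\gpb{1-a}+\pf{1-a}\suss{1}{a}$ involves only operations that commute with reduction modulo $\M$ (in particular, $a\in\wn{A}$ reduces to $\bar a\in\wn{k}$). By Lemma \ref{lem:cconst}(1) we have $3\cconst{A}=0$ and $3\cconst{k}=0$, so each of $\Z\cdot\cconst{A}$ and $\Z\cdot\cconst{k}$ is cyclic of order $1$ or $3$. Therefore it suffices to show that $\cconst{A}=0$ if and only if $\cconst{k}=0$: if both vanish the claim is trivial, and otherwise both groups are cyclic of order $3$, so the surjection between them induced by the map is an isomorphism.

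Next I would invoke the preceding Proposition to identify $\cconst{k}=0$ with the statement that $\Phi(X)$ has a root in $k$, and Lemma \ref{lem:cconst}(2) to note that $\cconst{A}=0$ as soon as $\Phi$ has a root in $A$. Functoriality already gives the direction $\cconst{A}=0\Rightarrow\cconst{k}=0$, so the problem collapses to showing that, under either set of hypotheses, a root of $\Phi$ in $k$ lifts to a root of $\Phi$ in $A$.

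The lifting step is the only substantive part of the argument and is where I would expect to spend the effort. In the case $\mathrm{char}(k)=2$ with $A$ henselian, the derivative $\Phi'(X)=2X-1$ reduces to $-1$ in $k[X]$, so $\Phi$ is separable modulo $\M$ and any root in $k$ is simple; Hensel's lemma then produces the lift directly. In the case $\mathrm{char}(k)\neq 2,3$ with $U_{1,A}=U_{1,A}^2$, both $2$ and $3$ are units in $A$, and the quadratic formula shows that $\Phi$ has a root in $A$ (respectively in $k$) if and only if $-3$ is a square in $\unitr{A}$ (respectively in $\unitr{k}$). Given $v\in\unitr{A}$ whose image satisfies $\bar v^{2}=\overline{-3}$, the ratio $-3v^{-2}$ lies in $U_{1,A}=U_{1,A}^{2}$, so it equals $w^{2}$ for some $w\in U_{1,A}$, yielding $-3=(vw)^{2}$. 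This completes the lifting step, and hence the corollary.

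The restriction $\mathrm{char}(k)\neq 3$ in the first case is genuinely needed, since for $\mathrm{char}(k)=3$ the polynomial $\Phi$ is inseparable modulo $\M$ (it becomes $(X+1)^{2}$), and a root in $k$ need not lift to $A$; this is why the corollary is formulated with precisely these two hypotheses.
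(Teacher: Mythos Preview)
Your proof is correct and follows exactly the same route as the paper: reduce to showing $\cconst{A}=0\iff\cconst{k}=0$, use the preceding Proposition and Lemma~\ref{lem:cconst}(2) to translate this into a lifting statement for roots of $\Phi$, and then lift. The paper's own proof is terse and simply asserts that ``the hypotheses then imply that $\Phi(X)$ has a root in $A$''; you have supplied precisely the details it omits (Hensel in the $\mathrm{char}(k)=2$ case, and the $-3$-is-a-square argument via $U_{1,A}=U_{1,A}^2$ in the other case), and done so correctly.
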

\begin{proof}
Clearly $\cconst{A}=0\imp \cconst{k}=0$. 

Conversely, if $\cconst{k}=0$ then $\Phi(X)$ has a root in $k$ . The hypotheses then imply 
that  $\Phi(X)$ has a root in $A$ and thus $\cconst{A}=0$ also.
\end{proof}

\begin{rem} When $\mathrm{char}(k)=3$, the result may fail.  
For example, $\Phi(X)$ has no root in $\Q_3$, so that 
$\cconst{\Q_3}\not= 0$, and hence $\cconst{\Z_3}\not=0$ also. But $\cconst{\F{3}}=0$. 
\end{rem}

\begin{thm}\label{thm:da}
Let $A$ be a local ring whose residue field has at least $10$ elements. Then 
\begin{enumerate}
\item For all $x\in \unitr{A}$, $\pf{x}\cconst{A}=\suss{1}{x}-\suss{2}{x}$.
\item  $\pf{x}\cconst{A}=0$ if $x\in \unitr{A}$ is of the form $\pm\Phi(a)u^2$ for some $a,u\in \unitr{A}$. 
\end{enumerate}
\end{thm}
\begin{proof} The proof when $A$ is a field is \cite[Theorem 3.12]{hut:rbl11}. The details of 
the extension to local rings is give in Appendix \ref{sec:app} below. 
\end{proof}

\subsection{The module $\cconstmod{A}$}

We let $\cconstmod{A}$ denote the cyclic $\sgr{A}$-submodule of $\rbl{A}$ generated by $\cconst{A}$. Of course, 
since $3\cconst{A}=0$ always, in fact $\cconstmod{F}$ is a module over the group ring $\F{3}[\sq{A}]$.

For a local ring $A$ with residue field $k$, 
let $\twn{A}:=\wn{A}\setminus\{ a\in A|\ \Phi(\bar{a})=0 \mbox{ in } k\}$.
Let $\tnorm{A}$ be the subgroup of $\unitr{A}$ generated by elements of the form 
$\pm\Phi(x)u^2$, $x\in \twn{A},u\in \unitr{A}$. By Theorem \ref{thm:da} (2), we have $\an{x}\cconst{A}=\cconst{A}$ if 
$x\in \tnorm{A}$. Let $\rsq{A}=\unitr{A}/\tnorm{F}$. Thus the action of 
$\sq{A}$ on $\cconst{A}$ factors through the quotient $\rsq{A}$, and hence $\cconstmod{A}$ is a cyclic module 
over the ring $\rsgr{A}:=\F{3}[\rsq{A}]$.

\begin{lem}\label{lem:da2df}
Let $A$ be a local ring with residue field $k$. Then the natural map $\cconstmod{A}\to\cconstmod{k}$ is surjective.

Furthermore, if $U_{1,A}=U_{1,A}^2$ and if $\cconstmod{k}$ is free (of rank $1$) as a $\rsgr{k}$-module, then  
$\cconstmod{A}\cong\cconstmod{k}$. 
\end{lem}

\begin{proof}
Since $\sq{A}$ maps onto $\sq{F}$ and hence $\sgr{A}$ maps onto $\sgr{F}$ the first statement is clear.

For the second statement, note that the conditions ensure that $\sq{A}\cong\sq{k}$ and under this isomorphism,
$\tnorm{A}$ corresponds to $\tnorm{k}$. Thus $\rsgr{A}\cong\rsgr{k}$ and the composite map
\[
\xymatrix{
\rsgr{A}\ar@{->>}[r]
&
\cconstmod{A}\ar[r]
&
\cconstmod{k}
}
\]  
is an isomorphism. 
\end{proof}

From \cite[Corollary 7.11, Proposition 7.12] {hut:arxivh3sl2dv} we itemize some conditions under which 
$\cconstmod{k}$ is a free $\rsgr{k}$-module.

\begin{prop}\label{prop:dkfree}
Let $k$ be a field in which $X^2-X+1$ has no roots. Then $\cconstmod{k}$ is a free $\rsgr{k}$-module 
of rank $1$ in any of the following cases
\begin{enumerate}
\item $k$ is finite.
\item $k$ is real closed.
\item There is a discrete valuation on $k$ with residue field $\bar{k}$ satisfying one of the 
following conditions:
\begin{enumerate}
\item $U_1=U_1^2$ and $X^2-X+1$ has no root in $\bar{k}$ and $\cconstmod{\bar{k}}$ is a free 
$\rsgr{\bar{k}}$-module
\item $k$ is complete with respect to $v$ and $\bar{k}$ is finite of characteristic not equal to 
$3$
\item $k$ is complete with respect to $v$ and $\bar{k}$ is finite of characteristic  
$3$ and $[k:\Q_3]$ is odd. 
\end{enumerate}
\end{enumerate}
\end{prop}

\begin{rem}
\begin{enumerate}
\item If $k$ is complete with respect to the valuation $v$ and if $\chara{\bar{k}}\not=2$ then 
$U_1=U_1^2$. However, $\Q_2$ is complete with respect to the associated discrete valuation but 
$U_1\not= U_1^2$ in this case.
\item $X^2-X+1$ has a (repeated) root in a $\F{3}$, but does not have a root in $\Q_3$.
Thus $\cconstmod{\F{3}}=0$ but $\cconstmod{\Q_3}\not= 0$.  However, 
$\aug{\Q_3}\cconstmod{\Q_3}=0$ by Proposition \ref{prop:dkfree} (3)(c), 
since $\rsgr{\Q_3}=\F{3}$ by \cite[Lemma 6.2]{hut:rbl11}. 
\end{enumerate}
\end{rem}

\subsection{Reduced  scissors  congruence groups }
We introduce some quotient groups of scissors congruence
 groups which will be required in our computations below. 

First recall that 
\[
\qrpb{A}=\frac{\rpb{A}}{\ks{1}{A}},\quad 
\qrpbker{A}=\ker{\tilde{\lambda}_1}\quad \mbox{ and } \quad \qrbl{A}=\ker{\rbw}\subset \qrpbker{A}.
\] 
Furthermore, by Corollary \ref{cor:qrbl}),
 the maps $\rpbker{A}\to\qrpbker{A}$ and $\rbl{A}\to\qrbl{A}$ are surjective with kernel 
annihilated by $4$. 

Since 
$\cconstmod{A}$ is annihilated by $3$, it follows that the composite 
\[
\cconstmod{A}\to\rbl{A}\to\qrbl{A}
\]  
is injective and we will identify $\cconstmod{A}$ with its image in $\qrbl{A}$.

We further define
\[
\rrpb{A}:=\frac{\qrpb{A}}{\cconstmod{A}}=\frac{\rpb{A}}{\ks{1}{A}+\cconstmod{A}},\quad 
\rrpbker{A}:=\frac{\qrpbker{A}}{\cconstmod{A}}
\quad 
\mbox{ and }\quad \rrbl{A}:=\frac{\qrbl{A}}{\cconstmod{A}}.
\]

Corollary \ref{cor:qrbl} implies:
\begin{lem}\label{lem:rbl}
For any field or local ring $A$ there are  short exact sequences
\[
0\to\cconstmod{A}\to\zhalf{\rpbker{A}}\to\zhalf{\rrpbker{A}}\to 0.
\]
and 
\[
0\to\cconstmod{A}\to\zhalf{\rbl{A}}\to\zhalf{\rrbl{A}}\to 0.
\]
\end{lem} 

Observe also that $\ks{2}{A}\subset\ks{1}{A}+\cconstmod{A}$ by Theorem \ref{thm:da} (2). It follows that for 
$i=1,2$ and all $x\in\unitr{A}$ we have 
$\suss{i}{x}=0$ in $\rrpb{A}$. 

\begin{lem}\label{lem:pb-} Let $A$ be a local ring whose residue field has at least $10$ elements.
\begin{enumerate}
\item For all $x\in \wn{A}$, $\gpb{x^{-1}}=-\an{-1}\gpb{x}=-\an{x}\gpb{x}$ in $\rrpb{A}$.
\item For all $x\in \wn{A}$, $\pf{y}\gpb{x}=0$ in $\rrpb{A}$ whenever 
$y\equiv -x\pmod{(\unitr{A})^2}$.
\item For all $x\in \wn{A}$, $\gpb{1-x}=\an{-1}\gpb{x}=\gpb{x^{-1}}$ in $\rrpb{A}$.
\end{enumerate}
\end{lem}
\begin{proof}
\begin{enumerate}
\item The first equality follows from $\suss{1}{x}=0$, the second from $\suss{2}{x}=0$.
\item From $\an{x}\gpb{x}=\an{-1}\gpb{x}$ it follows that $\an{y}\gpb{x}=\an{-x}\gpb{x}=\gpb{x}$.
\item Since $2\bconst{A}=\cconst{A}$ and $3\bconst{A}=\suss{1}{-1}\in\ks{1}{A}$, 
it follows that $\bconst{A}=0$ in $\rrpb{A}$, and thus we have (from the definition of 
$\bconst{A}$) that $0=\gpb{x}+\an{-1}\gpb{1-x}$ in $\rrpb{A}$.
\end{enumerate}
\end{proof}

\section{Characters and $\sgr{A}$-modules}\label{sec:char}
In this section we review some results from \cite{hut:arxivh3sl2dv}.

\subsection{The character-theoretic local-global principle}
Let $G$ be a multiplicative abelian group satisying $g^2=1$ for all $g\in G$. 
Let $\mathcal{R}$ denote
 the group ring $\Z[G]$. Let $\dual{G}$ be the group of characters of $G$, i.e. the group of 
homomorphisms $\chi:G\to \mu_2$. 

For any $\mathcal{R}$-module $M$ and any $\chi\in\dual{G}$ we let $\dwn{M}{\chi}$ denote the the 
quotient of $M$ by the submodule generated by the elements $(\chi(g)-g)m$, $g\in G$, $m\in M$. 
Thus $g\cdot x= \chi(g)x$ for all $g\in G$ and $x\in \dwn{M}{\chi}$. For any $m\in M$, we 
will let $\dwn{m}{\chi}$ denote the image of $m$ in $\dwn{M}{\chi}$.

We let $\chi_0$ denote the trivial character on $G$. Thus, for any $\mathcal{R}$-module 
$M$,  $\dwn{M}{\chi_0}$ is equal to the module $M_G$ of coinvariants. 

If $f:M\to N$ is a homomorphism of $\mathcal{R}$-modules we let $\dwn{f}{\chi}$ denote the induced 
homomorphism $\dwn{M}{\chi}\to\dwn{N}{\chi}$.  

The following observation is proved in \cite[Corollary 3.7]{hut:arxivh3sl2dv}:

\begin{lem}\label{lem:exact}
For any $\chi\in\dual{G}$, the functor $M\to \dwn{M}{\chi}$ is an exact functor on the category 
of $\zhalf{\mathcal{R}}$-modules.  
\end{lem}

We recall the following character-theoretic local global principle 
(see \cite[Proposition 3.10]{hut:arxivh3sl2dv}):

\begin{prop}\label{prop:lg}
Let $f:M\to N$ be a homomorphism of $\zhalf{\mathcal{R}}$-modules. Then $f$ is injective (resp. surjective)
if and only if $\dwn{f}{\chi}$ is injective (resp. surjective) for all $\chi\in \dual{G}$. 
\end{prop}

Of course we will apply this in the case $G=\sq{A}$ and hence $\mathcal{R}=\sgr{A}$ for a local ring 
$A$.

\begin{lem} \label{lem:rpbchi} Let $A$ be a local ring whose residue field has at least $10$ 
elements.
Let $\chi\in\dual{\sq{A}}$. 

If $a\in \wn{A}$ satisfies $\chi(-a)=-1$ then 
$2\cdot \dwn{\gpb{a}}{\chi}=0$ in $\dwn{\rrpb{A}}{\chi}$.
\end{lem}

\begin{proof}
By definition, $(\an{-a}+1)x=0$ for all $x\in \dwn{\rrpb{A}}{\chi}$. In particular, 
$(\an{-a}+1)\dwn{\gpb{a}}{\chi}=0$
However,
$(\an{-a}-1)\gpb{a}=0$ in $\rrpb{A}$ by Lemma \ref{lem:pb-} (2).  
\end{proof}
\subsection{The action of $\an{-1}$ on $\rpbker{A}$}

\begin{prop}\label{prop:-1} Let $A$ be a local ring whose residue field has at least 
$10$ elements. Then $\an{-1}\in \sgr{F}$  acts trivially on $\zhalf{\rrpbker{A}}$ and in fact 
$\zhalf{\rpbker{A}}=\ep{-1}\zhalf{\rpb{A}}$.  
\end{prop}

\begin{proof} The proof given in \cite[Theorem 4.3, Corollary 4.4]{hut:arxivh3sl2dv} 
in the case of a field $F$  applies here, 
simply by replacing $F^\times\setminus\{ 1\}$ with $\wn{A}$. 
 \end{proof}

Since $\an{-1}$ acts trivially on $\cconst{A}$ and on $\mathrm{tor}{(\mu_A,\mu_A)}$, we deduce as in 
\cite{hut:arxivh3sl2dv}:

\begin{cor}\label{cor:-1}
 Let $A$ be a local ring whose residue field has at least 
$10$ elements.
 Then $\an{-1}\in \sgr{A}$  acts trivially on 
$\zhalf{\rpbker{A}}$ and on $\ho{3}{\spl{2}{A}}{\zhalf{\Z}}$.
\end{cor}

\begin{rem} As in \cite{hut:arxivh3sl2dv}, it follows that 
$\zhalf{\rpbker{A}}= \ep{-}\zhalf{\rpb{A}}$ is a quotient of $\zhalf{\rpb{A}}$ and hence admits a 
simple explicit presentation as a $\sgr{A}$-module.
\end{rem}

\section{ The map from $\rpbker{A}$ to $\rpbker{k}$}\label{sec:val}

In this section $A$ will be a commutative local ring with residue field $k$.

\subsection{The module $\kv{A}$} 

We let 
\[
\kv{A}:= \modgen{\gpb{au}-\gpb{a}\in \rrpb{A}}{a\in \wn{A},u\in U_1}{\sgr{A}}.
\]


\begin{lem}\label{lem:kv} There is a short exact sequence of $\sgr{A}$-modules 
\[
0\to \kv{A}\to \rrpb{A}\to \rrpb{k}\to 0.
\]
\end{lem}

\begin{proof} The functorial map $\rrpb{A}\to \rrpb{k}$ is clearly surjective and $\kv{A}$ is 
contained in its kernel.
 
Thus if we let $Q(A):=\rrpb{A}/\kv{A}$, there is an induced surjective homomorphism 
of $\sgr{A}$-modules $Q(A)\to \rrpb{k}$. We must show that this is an isomorphism. 

I \emph{claim} that $\an{u}$ acts trivially on $Q(A)$ for any $u\in U_1$: Let $a\in \wn{A}$. 
Then $\gpb{a}=\gpb{au}$ in $Q(A)$ for any $u\in U_1$.
 For all $b\in \wn{A}$,  $\an{-b}\gpb{b}=\gpb{b}$ in $Q(A)$ by Lemma \ref{lem:pb-} (2).
Thus, for any $u\in U_1$ we have
\[
\an{u}\gpb{a}=\an{-au}\gpb{a}=\an{-au}\gpb{au}=\gpb{au}=\gpb{a}
\] 
in $Q(A)$, proving the \emph{claim}.

It follows that the action of $\sgr{A}$ on $Q(A)$ descends to an action of 
$\Z[A^\times/U_1\cdot(A^\times)^2]=\sgr{k}$. 
This allows us to construct a well-defined inverse map 
\[
\rrpb{k}\to Q(A)=\rrpb{A}/\kv{A},\quad \gpb{\bar{a}}\mapsto \gpb{a}+\kv{A}.
\] 
\end{proof}
\subsection{The  map $\rpbker{A}\to\rpbker{k}$}

\begin{thm}\label{thm:main}
Let $A$ be a local ring with sufficiently large residue field $k$. Suppose that $A$ satisfies 
the condition $U_1=U_1^2$. 

Then the  functorial map $\rpb{A}\to\rpb{k}$ induces an isomorphism 
\[
\aug{A}\zhalf{\rrpbker{A}}\cong\aug{k}\zhalf{\rrpbker{k}}.
\]
\end{thm}

\begin{proof}
By Proposition \ref{prop:-1} and the character-theoretic local-global principle, it is 
enough to prove that $\dwn{\zhalf{\rrpb{A}}}{\chi}\cong \dwn{\zhalf{\rrpb{k}}}{\chi}$ for 
all characters $\chi$ satisfying $\chi(-1)=1$ and $\chi\not=\chi_0$.  

By Lemmas \ref{lem:exact} and 
\ref{lem:kv} we must prove that $\dwn{\zhalf{\kv{A}}}{\chi}=0$ for all such characters 
$\chi$; i.e. we must prove that $\dwn{\gpb{a}}{\chi}=\dwn{\gpb{au}}{\chi}$ for all 
$a\in \wn{A}$, $u\in U_1$ and $\chi$ satisfying $\chi(-1)=1$, $\chi\not=\chi_0$. 

Let $\chi\not= \chi_0$ be a character satisfying $\chi(-1)=1$. Let 
$K=K(\chi):=\{ a\in \unitr{A}\ |\ \chi(a)=1\}$.    

By Lemma \ref{lem:rpbchi}, if $a\in\wn{A}\setminus K$ then $\dwn{\gpb{a}}{\chi}=0$ in 
$\dwn{\zhalf{\rrpb{A}}}{\chi}$. Since $\gpb{1-a}= \an{-1}\gpb{a}$ in $\rrpb{A}$, it also 
follows that if $1-a\not\in K$ then $\dwn{\gpb{a}}{\chi}=0$.

Let $S_1:=\{a\in \wn{A}\ |\ a,1-a\in K\}$ and let $S_2:=\{ a\in \wn{A}\ |\ a\in K, 1-a\not\in K\}$. 
Thus $\dwn{\gpb{a}}{\chi}=0$ if $a\not\in S_1$.

Observe that if $a\in \wn{A}\cap K$ then 
\[
\chi(1-a^{-1})=\chi(-a^{-1}(1-a))=\chi(-1)\chi(a)\chi(1-a)=\chi(1-a).
\]
Thus $a\in S_i$ if and only if $a^{-1}\in S_i$, for $i=1,2$.

Furthermore, $U_1\subset K$ since $U_1=U_1^2$ and for any $a\in \wn{A}$ we have 
$1-a\equiv 1-au\pmod{U_1} \imp 1-a\equiv 1-au\pmod{(\unitr{A})^2}$ and thus 
$\chi(1-a)=\chi(1-au)$.  

It follows that for all $a\in \wn{A}$ and $u\in U_1$ we have 
$a\in S_i$ if and only if $au\in S_i$ for $i=1,2$. 

Note also that $S_2\not=\emptyset$: Let $a\not\in K$. If $1-a\in K$ then $1-a\in S_2$. Otherwise 
$1-a\not\in K$ and hence $\chi(1-a^{-1})=\chi(-1)\chi(a)\chi(1-a)=1$ so that $1-a^{-1}\in S_2$. 

Now let $a\in S_1$, $b\in S_2$. Note that it follows that $ab\in \wn{A}$  since 
otherwise $u:=ab\in U_1\imp b=a^{-1}u\in S_1$. 

We \emph{Claim} that $\dwn{\gpb{a}}{\chi}=\dwn{\gpb{ab}}{\chi}$: We have 
\[
0=\dwn{\left( S_{a,ab}\right)}{\chi}=
\dwn{\gpb{a}}{\chi}-\dwn{\gpb{ab}}{\chi}+\dwn{\gpb{b}}{\chi}-
\dwn{\gpb{b\cdot\frac{1-a}{1-ab}}}{\chi}+\dwn{\gpb{\frac{1-a}{1-ab}}}{\chi}.
\]

Note that $\dwn{\gpb{b}}{\chi}=0$ since $b\not\in S_1$. 

We will show that the last two 
terms also vanish.
We consider two cases:
\begin{enumerate}
\item[Case (i): ] $ab\in S_2$

In this case, $1-ab\not\in K \imp (1-a)/(1-ab)\not\in K$ and $b\cdot(1-a)/(1-ab)\not\in K$. Hence 
the last two terms are $0$.

\item[Case (ii):] $ab\not\in S_2$

Then $1-ab\in K$. Thus $s:=(1-a)/(1-ab)\in K$. However
\[
1-s=1-\frac{1-a}{1-ab}=a\cdot \frac{1-b}{1-ab}\not\in K
\]
since $a\in K$, $1-b\not\in K$. it follows that $\dwn{\gpb{s}}{\chi}=0$. 

Similarly, replacing $a, b$ by $a^{-1},b^{-1}$ it follows that $\dwn{\gpb{t}}{\chi}=0$ where 
\[
t:= \frac{b(1-a)}{1-ab}= \frac{1-a^{-1}}{1-(ab)^{-1}}. 
\]
\end{enumerate}
Thus the \emph{Claim} is proven. 

Now let $a\in \wn{A}$, $u\in U_1$. Fix $b\in S_2$.

If $a\not\in S_1$ then $au\not\in S_1$ and hence 
\[
\dwn{\gpb{a}}{\chi}=0=\dwn{\gpb{au}}{\chi}.
\]

Otherwise, $a\in S_1$. Hence $au\in S_1$ and $u^{-1}b\in S_2$. By the \emph{Claim} we have 
\[
\dwn{\gpb{a}}{\chi}=\dwn{\gpb{ab}}{\chi}
\]
 and also 
\[
\dwn{\gpb{au}}{\chi} = \dwn{\gpb{(au)(u^{-1}b)}}{\chi} = \dwn{\gpb{ab}}{\chi}.
\]
The theorem is thus proven. 
\end{proof}

\begin{rem} The theorem does not hold for general local rings, however.

For example, if the residue field $k$ is finite, then $\aug{k}\rpbker{k}=0$. Thus if the theorem 
holds for a local ring $A$ with finite residue field then it implies that 
$\aug{A}\zhalf{\rrpbker{A}}=0$. 

Let $p$ be a prime number and consider $A=\Z_{\il{p}}= \{ a/b\in \Q\ |\ p\not| b\ \}$ with residue 
field $\F{p}$. In \cite{hut:rbl11} it is shown that there is a natural surjective homomorphism
\[
\aug{\Q}\zhalf{\rpbker{\Q}}\to \oplus_{q\ \mbox{\tiny prime}} \zhalf{\pb{\F{q}}}.
\] 
It is straightfoward to verify that the resulting  composite map
 \[
\aug{A}\zhalf{\rpbker{A}}\to \aug{\Q}\zhalf{\rpbker{\Q}}
\to \oplus_{q\not= p} \zhalf{\pb{\F{q}}}
\]
is surjective and thus $\aug{A}\zhalf{\rpbker{A}}\not= 0$ and also 
$\aug{A}\zhalf{\rrpbker{A}}\not=0$.

This example suggests that some completeness or Henselian condition, such as the condition
 $U_1=U_1^2$, is necessary for the theorem to hold. 
\end{rem}

\subsection{Application to the third homology of $\mathrm{SL}_2$ of local domains}

For a local ring $A$ with residue field $k$ we let 
\[
\mathcal{D}(A/k):= \ker{\aug{A}\cconstmod{A}\to \aug{k}\cconstmod{k}}.
\]
Thus $\mathcal{D}(A/k)$ is an $\sgr{A}$-submodule of $\cconstmod{A}$. In particular, 
it is annihilated 
by $3$. 

\begin{rem}\label{rem:dfin}
Note that, by Lemma \ref{lem:da2df} and Proposition \ref{prop:dkfree} (1), if $A$ is a local integral
domain satisfying $U_1=U_1^2$ and with finite residue field $k$ then $\mathcal{D}(A/k)=0$.

In fact, when $k$ is a finite field then $\sgr{k}$ acts trivially on $\rbl{k}=\bl{k}$ and hence 
$\aug{k}\cconstmod{k}=0$.  Thus, under the stated  conditions, $\aug{A}\cconstmod{A}=0$ also.  
\end{rem}

\begin{prop}\label{prop:h3afin}
Let $A$ be a local integral domain with sufficiently large finite 
residue field $k$. Suppose that $A$ satisfies 
the condition $U_1=U_1^2$. 

Then $A^\times$ acts trivially on $\ho{3}{\spl{2}{A}}{\zhalf{\Z}}$.

Equivalently,
 the natural map $\ho{3}{\spl{2}{A}}{\zhalf{\Z}}\to\zhalf{\kind{A}}$ 
is an isomorphism. 
\end{prop}

\begin{proof} By Corollaries \ref{cor:bwfin} and \ref{cor:rpbkerA} it is enough to prove that 
$\aug{A}\zhalf{\rpbker{A}}=0$. 
  
There is a commutative diagram of $\sgr{A}$-modules with exact rows
\[
\xymatrix{
0\ar[r]
&\aug{A}\cconstmod{A}
\ar[r]\ar@{>>}[d]
&\aug{A}\zhalf{\rpbker{A}}\ar[r]\ar@{>>}[d]
&\aug{A}\zhalf{\rrpbker{A}}\ar[r]\ar[d]^-{\cong}
&0\\
0\ar[r]
&\aug{k}\cconstmod{k}
\ar[r]
&\aug{k}\zhalf{\rpbker{k}}\ar[r]
&\aug{k}\zhalf{\rrpbker{k}}\ar[r]
&0.\\}
\]

The right-hand vertical arrow is an isomorphism by Theorem \ref{thm:main}. Thus we can extract the 
natural short exact sequence
\[
0\to \mathcal{D}(A/k)\to \aug{A}\zhalf{\rpbker{A}}\to \aug{k}\zhalf{\rpbker{k}}\to 0.
\]

By remark \ref{rem:dfin}, $\mathcal{D}(A/k)=0$ and hence the middle vertical arrow is an isomorphism.
 
Furthermore, since $k$ is finite, $\rbl{k}=\bl{k}$ and hence 
$\aug{k}\rbl{k}=0$ (\cite[Lemma 7.1]{hut:cplx13}). It follows that 
\[
\aug{A}\zhalf{\rbl{A}}= \aug{A}\zhalf{\rpbker{A}}\cong
\aug{k}\zhalf{\rpbker{k}}= \aug{k}\zhalf{\rbl{k}}=0. 
\] 
\end{proof}

\begin{cor}\label{cor:main}
Let $A$ be a local integral domain with sufficiently large residue field $k$. Suppose that 
$U_1=U_1^2$. There is a short exact 
sequence of $\sgr{A}$-modules
\[
0\to \mathcal{D}(A/k)\to \hot{A}{\zhalf{\Z}}\to \hot{k}{\zhalf{\Z}}\to 0.
\] 
\end{cor}

\begin{proof}
As in the proof of Proposition \ref{prop:h3afin}, there is a short exact sequence
\[
0\to \mathcal{D}(A/k)\to \aug{A}\zhalf{\rpbker{A}}\to \aug{k}\zhalf{\rpbker{k}}\to 0.
\]

We have 
\[
\aug{A}\zhalf{\rpbker{A}}\cong \hot{A}{\zhalf{\Z}}\mbox{ and }
\aug{k}\zhalf{\rpbker{k}}\cong \hot{k}{\zhalf{\Z}}
\]
by Corollary \ref{cor:rpbkerA}.
\end{proof}

\section{The localization sequence for discrete valuation rings}\label{sec:local}

\subsection{Fields with discrete valuation}

We recall some of the main results of \cite{hut:arxivh3sl2dv}: 

Let $F$ be a field with discrete valuation
$v$ and residue field $k=k_v$. Let $\mathcal{O}_v$ denote the corresponding valuation ring. 
Let $U=U_v=\unitr{\mathcal{O}_v}$.

Fix a uniformizer $\pi\in\mathcal{O}_v$.
 Then exists a surjective homomorphism of $\Z[U_v/U_v^2]$-modules 
$\delta_\pi:\hoz{3}{\spl{2}{F}}{\zhalf{\Z}}\to \zhalf{\rpbker{k}}$, which we proceed to describe. 
(We also denote by $\delta_\pi$ 
the restriction of this homomorphism to $\hot{F}{\zhalf{\Z}}$.)

Given any $\sgr{k}$-module $M$, 
we define the induced $\sgr{F}$-module
\[
\indf{k}{F}{M}:=\sgr{F}\otimes_{\Z[U/U^2]}M.
\]

 Note that $\sq{F}=U/U_2\times C_\pi$ where 
$C_\pi$ denotes the multiplicative group of order $2$ generated by the square class $\an{\pi}$.
Thus  $\sgr{F}=\Z[U/U^2][C_\pi]$  and there is a decomposition of $\Z[U/U^2]$-modules
\[
\indf{k}{F}{M}\cong M\oplus \an{\pi}\cdot M\cong M\oplus M.
\]
We let $\rho_\pi:\indf{k}{F}{M}\to M$ denote the projection on the second factor. 

There is a natural surjective specialization or residue
homomorphism of $\sgr{F}$-modules 
\begin{eqnarray*}
S:=S_{v}:\rpb{F}&\to&\indf{k}{F}{\qrpb{k}}\\
\gpb{a}&\mapsto&\left\{
\begin{array}{rc}
1\otimes \gpb{\bar{a}},& v(a)=0\\
1\otimes\bconst{k},& v(a)>0\\
-(1\otimes\bconst{k}),& v(a)<0\\
\end{array}
\right.
\end{eqnarray*}
where $\qrpb{k}$ is a certain quotient of the $\sgr{k}$-module $\rpb{k}$. 

$S_v$ induces a well-defined homomorphism
\[
S_v:\rpbker{F}\to \indf{k}{F}{\qrpbker{k}}
\]
where $\qrpbker{k}$ is a quotient of $\rpbker{k}$ satisfying $\zhalf{\rpbker{k}}\cong\zhalf{\qrpbker{k}}$. 
(However, $S_v$ does not restrict to a homomorphism $\rbl{F}\to \indf{k}{F}{\qrbl{k}}$.)

Then we define $\delta_\pi$ to be the composite
\[
\xymatrix{
\ho{3}{\spl{2}{F}}{\zhalf{\Z}}\ar[r]
&\zhalf{\rbl{F}}\ar[r]
&\zhalf{\rpbker{F}}\ar^-{S_v}[r]
&\indf{k}{F}{\zhalf{\rpbker{k}}}\ar^-{\rho_\pi}[r]
&\zhalf{\rpbker{k}}.
}
\]

\begin{lem}\label{lem:dpiov}
The composite 
\[
\xymatrix{
\ho{3}{\spl{2}{\mathcal{O}_v}}{\zhalf{\Z}}\ar[r]
&\ho{3}{\spl{2}{F}}{\zhalf{\Z}}\ar^-{\delta_\pi}[r]
&\zhalf{\rpbker{k}}
}
\]
is the zero map. 
\end{lem}

\begin{proof}
By functoriality, the square of $\Z[U/U^2]$-module homomorphisms
\[
\xymatrix{
\ho{3}{\spl{2}{\mathcal{O}_v}}{\Z}\ar[r]\ar[d]
&\ho{3}{\spl{2}{F}}{\Z}\ar[d]\\
\rbl{\mathcal{O}_v}\ar[r]
&\rbl{F}
}
\]
commutes. Since $\delta_\pi$ factors through the right vertical map, it is sufficient to prove that 
the composite
\[
\xymatrix{
\zhalf{\rbl{\mathcal{O}_v}}\ar[r]
&\zhalf{\rbl{F}}\ar^-{S_v}[r]
&\indf{k}{F}{\zhalf{\qrpb{k}}}\ar^-{\rho_\pi}[r] 
&\zhalf{\qrpb{k}}
}
\]
is the zero map.

$\rbl{\mathcal{O}_v}$ is generated as a $\Z[U/U^2]$-module by elements $\gpb{a}$, $a\in U\setminus U_1$.
By definition, 
\[
\rho_\pi(S_v(\gpb{a}))=\rho_\pi(1\otimes \gpb{\bar{a}})=0
\]
for any such $a$. 
\end{proof}

Let $F$ be a field with discrete valuation $v:F^\times \to \Z$. We consider the module
\[
\cconstmod{F}(v):= \ker{\spec{v}: \aug{F}\cconstmod{F}\to \aug{F}(\indf{k}{F}{\cconstmod{k}})}.
\]
Observe that this module is annihilated by $3$.

\begin{prop}\label{prop:h3sl2dv}
Let $F$ be a field with discrete valuation
$v$. Let $\pi$ be a uniformizer. Suppose that there exists $n\geq 1$ such that $U_n\subset U_1^2$. 
Then there is a natural short exact sequence of $\Z[U_v/U_v^2]$-modules
\[
0\to \cconstmod{F}(v)\to \hot{F}{\zhalf{\Z}}\to \hot{k}{\zhalf{\Z}}\oplus \zhalf{\rpbker{k}}\to 0
\]
where $\delta_\pi$ is the composite of the map
$
\hot{F}{\zhalf{\Z}}\to \hot{k}{\zhalf{\Z}}\oplus \zhalf{\rpbker{k}}
$
with projection on the second factor. 
\end{prop}

\begin{proof} This is \cite[Proposition 7.1]{hut:arxivh3sl2dv}.
\end{proof}
\begin{cor}\label{cor:dv}
Let $F$ be a field with discrete valuation $v$ and residue field $k$. Suppose that 
$U_1= U_1^2$ and that $\cconstmod{F}(v)=0$. Let $\pi$ be a uniformizer.
 
Then there is a short exact sequence of $\sgr{k}$-modules
\[
\xymatrix{
0\ar[r]
& \hot{k}{\zhalf{\Z}}\ar[r]
&\hot{F}{\zhalf{\Z}}\ar[r]^-{\delta_\pi}
&\zhalf{\rpbker{k}}\ar[r]
&0.\\
}
\]
\end{cor}

\subsection{The localization theorem}
\begin{thm}\label{thm:loc}
Let $F$ be a field with discrete valuation $v$ and residue field $k$. Suppose that either 
$\mathrm{char}(F)=\mathrm{char}(k)$ or that $k$ is algebraic over $\F{p}$ for some prime $p$.
Suppose further that 
$U_1= U_1^2$ and that $\cconstmod{F}(v)=0=\mathcal{D}(\mathcal{O}_v/k)$. Let $\pi$ be a uniformizer.
 
Then there is a natural short exact sequence of $\sgr{\mathcal{O}_v}$-modules
\[
\xymatrix{
0\ar[r]
& \ho{3}{\spl{2}{\mathcal{O}_v}}{\zhalf{\Z}}\ar[r]
&\ho{3}{\spl{2}{F}}{\zhalf{\Z}}\ar[r]^-{\delta_\pi}
&\zhalf{\rpbker{k}}\ar[r]
&0.\\
}
\]
\end{thm}
\begin{proof}
We begin by observing that the triangle 
\[
\xymatrix{
\hot{\mathcal{O}_v}{\zhalf{\Z}}\ar[rr]^-{\cong}\ar[rd]
&&\hot{k}{\zhalf{\Z}}\ar[ld]\\
&\hot{F}{\zhalf{\Z}}&
}
\]
commutes (where the horizontal map is an isomorphism by Corollary \ref{cor:main}). This follows 
from the commutativity of the square
\[
\xymatrix{
\rpb{\mathcal{O}_v}\ar[r]\ar[d]
&\qrpb{k}\ar@{^{(}->}[d]\\
\rpb{F}\ar[r]^-{\spec{v}}
&\indf{k}{F}{\qrpb{k}}\\
},\quad
\xymatrix{
\an{u}\gpb{v}\ar[r]\ar[d]
&\an{\bar{u}}\gpb{\bar{v}}\ar[d]\\
\an{u}\gpb{v}\ar[r]
&1\otimes \an{\bar{u}}\gpb{\bar{v}}.\\}
\]  

Thus by Corollary \ref{cor:dv} the long vertical column in the diagram 
\[
\xymatrix{
&0\ar[d]&&&\\
0\ar[r]
&\hot{\mathcal{O}_v}{\zhalf{\Z}}\ar[r]\ar[d]
&\ho{3}{\spl{2}{\mathcal{O}_v}}{\zhalf{\Z}}\ar[r]\ar[d]
&\zhalf{\kind{\mathcal{O}_v}}\ar[r]\ar[d]^-{\cong}
&0\\
0\ar[r]
&\hot{F}{\zhalf{\Z}}\ar[r]\ar[d]^-{\delta_\pi}
&\ho{3}{\spl{2}{F}}{\zhalf{\Z}}\ar[r]
&\zhalf{\kind{F}}\ar[r]
&0\\
&\zhalf{\rpbker{k}}\ar[d]&&&\\
&0&&&\\
}
\]
is exact.   

Since the right-hand vertical arrow is an isomorphism by Theorem \ref{thm:ktheory}, the result follows 
from the snake lemma.

\end{proof}
\subsection{The condition $\cconstmod{F}(v)=0=\mathcal{D}(\mathcal{O}_v/k)$}
\begin{rem}
If $X^2-X+1$ has a root in $\mathcal{O}_v$, then $\cconst{\mathcal{O}_v}=\cconst{F}=\cconst{k}=0$
and therefore $\cconstmod{F}(v)=0=\mathcal{D}(\mathcal{O}_v/k)$. 
\end{rem}

\begin{prop}\label{prop:do}
Let $F$ be a field with discrete valuation $v$ and residue field $k$. Suppose that $U_1=U_1^2$ and 
that $\cconstmod{k}$ is free as a $\rsgr{k}$-module. Suppose also that $X^2-X+1$ has no root in
 $k$. Then
\[
\cconstmod{F}(v)=0=\mathcal{D}(\mathcal{O}_v/k).
\]
\end{prop}

\begin{proof}
By Lemma \ref{lem:da2df} the conditions ensure that the map $\cconstmod{\mathcal{O}_v}\to 
\cconstmod{k}$ is an isomorphism and hence that $\mathcal{D}(\mathcal{O}_v/k)=0$. 

On the other hand, $\cconstmod{F}(v)=0$ under these 
conditions by \cite[Corollary 7.11]{hut:arxivh3sl2dv}.
\end{proof}

\subsection{Some examples}
\begin{exa} Let $F$ be a field complete with respect to a discrete valuation $v$ with 
sufficiently large finite residue field $k$ of odd characteristic. 
Then   
$\ho{3}{\spl{2}{\mathcal{O}_v}}{\zhalf{\Z}}\cong \zhalf{\kind{\mathcal{O}_v}}\cong 
\zhalf{\kind{F}}$ by Proposition \ref{prop:h3afin} and there is a (split) short exact sequence 
\[
0\to \ho{3}{\spl{2}{\mathcal{O}_v}}{\zhalf{\Z}}\to
\ho{3}{\spl{2}{F}}{\zhalf{\Z}}\to \zhalf{\pb{k}}\to 0
\] 
(since $\zhalf{\rpbker{k}}=\zhalf{\pb{k}}$ for a finite field).
Recall that if $k=\F{q}$ then $\zhalf{\pb{k}}$ is cyclic of order $(q+1)_{\mathrm{odd}}$. 
\end{exa}
\begin{exa} In particular, if $p\geq 11$ is prime then 
$\ho{3}{\spl{2}{\Z_p}}{\zhalf{\Z}}\cong \zhalf{\kind{\Q_p}}$ and 
\[
\ho{3}{\spl{2}{\Q_p}}{\zhalf{\Z}}\cong \ho{3}{\spl{2}{\Z_p}}{\zhalf{\Z}}\oplus \Z/(p+1)_{\mathrm{odd}}.
\]
\end{exa}
\begin{exa} Let $k$ be any (sufficiently large) field. By Theorem \ref{thm:loc} there is a 
short exact sequence of $\sgr{k}$-modules
\[
\xymatrix{
0\ar[r]
& \ho{3}{\spl{2}{\pows{k}{x}}}{\zhalf{\Z}}\ar[r]
&\ho{3}{\spl{2}{\laurs{k}{x}}}{\zhalf{\Z}}\ar[r]^-{\delta_x}
&\zhalf{\rpbker{k}}\ar[r]
&0.\\
}
\]

Note further that the surjective maps 
\[
\ho{3}{\spl{2}{\pows{k}{x}}}{\zhalf{\Z}}\to \zhalf{\kind{\pows{k}{x}}}\mbox{ and }
\zhalf{\rpbker{k}}\to \zhalf{\pb{k}}
\]
both have kernels isomorphic to $\aug{k}\zhalf{\rpbker{k}}$.  This module is in general quite 
large and has been computed for many fields in \cite{hut:arxivh3sl2dv}. 
\end{exa}

\begin{exa} Let $p$ be any prime and take $k=\Q_p$ in the last example. Then 
$\aug{k}\zhalf{\rpbker{k}}\cong \zhalf{\pb{\F{p}}}\cong \Z/(p+1)_{\mathrm{odd}}$ (by 
\cite[Proposition 7.1]{hut:arxivh3sl2dv}). Thus there are  (split) short exact sequences
\[
0\to \zhalf{\pb{\F{p}}}\to \ho{3}{\spl{2}{\pows{\Q_p}{x}}}{\zhalf{\Z}}\to 
\zhalf{\kind{\pows{\Q_p}{x}}}\to 0
\] 
and
\[
0\to \zhalf{\pb{\F{p}}}\to \zhalf{\rpbker{\Q_p}}\to \zhalf{\pb{\Q_p}}\to 0.
\]
\end{exa}

\subsection{Acknowledgement} 

I wish to  thank  Behrooz Mirzaii for answering some questions 
concerning his results on the third homology of $SL_2$. 

\pagebreak
\appendix

\section{A useful identity in $\rpb{A}$}\label{sec:app}
 Let $A$ be a local ring whose  residue field has at least 
$10$ elements. The purpose of this appendix is to prove the important identity 
\[
\pf{a}\cconst{A}=\suss{1}{a}-\suss{2}{a}
\]
 in $\rpb{A}$ for all $a\in \wn{A}$ (Theorem \ref{thm:df} below). 

This 
identity was proved in the 
case when $A$ is a field in \cite{hut:rbl11}. We verify here that the proof for fields adapts to 
the case of local rings with only minor modifications.   

Let $t$ denote the matrix of order $3$ 
\[
\matr{-1}{-1}{1}{0}\in \spl{2}{\Z}.
\]
It can be shown that 
$\ho{3}{\spl{2}{\Z}}{\Z}$ is cyclic of order $12$ and that the inclusion $G:=\an{t}\to \spl{2}{\Z}$ induces an 
isomorphism 
$\ho{3}{G}{\Z}\cong\ptor{\ho{3}{\spl{2}{\Z}}{{\Z}}}{3}$. 

We will identify $A$ with $A_+=\iota_+(A)\subset X_1$. 

For any subset $S$ of $A$, we have $S^n\subset X_1^n$ and we let 
\[
\discn{n}{S}:=S^n\cap X_n = \{ (s_1,\ldots,s_n)\in S^n\ |\ s_i-s_j\in \unitr{A}\mbox{ for } i\not= j\}.
\]
Thus there is a natural inclusion of additive groups $\Z[\discn{n}{S}]\to \Z[X_n]$.
 
We note that $\wn{A}$ is a right $G$-set since for any $a\in \wn{A}$
\[
a\cdot t = [a,1]\cdot \matr{-1}{-1}{1}{0}= [1-a,-a]=[1-a^{-1},1]=1-a^{-1}\in \wn{A}\subset X_1.
\]

For a local ring $A$, let $\twn{A}:=\wn{A}\setminus\{ a\in A|\ \Phi(\pi(a))=0 \mbox{ in } k\}$.

\begin{lem} \label{lem:twn}
If $x\in \twn{A}$, then $x\cdot t^i-x\cdot t^j\in \unitr{A}$ whenever $i\not\equiv j\pmod{3}$.
\end{lem}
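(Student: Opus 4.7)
The plan is straightforward: since $t$ has order $3$, it suffices to check the three pairwise differences $x - x\cdot t$, $x - x\cdot t^2$, and $x\cdot t - x\cdot t^2$, which by the remarks preceding the lemma means computing explicit elements of $A$ and verifying that each is a unit.

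First I would compute the orbit of $x$ under $\langle t\rangle$ using the formula $a\cdot t = 1 - a^{-1}$ already recorded in the excerpt. Starting from $x\in \wn{A}$, this gives $x\cdot t = 1 - x^{-1} = (x-1)/x$, and iterating gives
\[
x\cdot t^2 = 1 - \left(\tfrac{x-1}{x}\right)^{-1} = 1 - \tfrac{x}{x-1} = \tfrac{1}{1-x}.
\]
(Along the way one checks that $(x-1)/x$ really lies in $\wn{A}$, so that the formula for $t$ can be applied a second time: this uses $x-1\in\unitr{A}$ and $x\notin U_1$, both of which follow from $x\in\wn{A}$.)

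Next I would compute the three pairwise differences and factor out $\Phi(x) = x^2 - x + 1$:
\begin{align*}
x - x\cdot t &= x - \tfrac{x-1}{x} = \tfrac{x^2 - x + 1}{x} = \tfrac{\Phi(x)}{x},\\
x - x\cdot t^2 &= x - \tfrac{1}{1-x} = -\tfrac{\Phi(x)}{1-x},\\
x\cdot t - x\cdot t^2 &= \tfrac{x-1}{x} - \tfrac{1}{1-x} = -\tfrac{\Phi(x)}{x(1-x)}.
\end{align*}
In each denominator the factors $x$ and $1-x$ are units because $x\in\wn{A}$, so each difference is a unit multiple of $\Phi(x)$.

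Finally, since $A$ is local, $\Phi(x)\in\unitr{A}$ if and only if $\pi(\Phi(x)) = \Phi(\pi(x))\neq 0$ in $k$. By definition of $\twn{A}$, this is precisely guaranteed by our hypothesis $x\in\twn{A}$, so all three differences lie in $\unitr{A}$. There is no real obstacle here; the only subtle point is being careful that $x\cdot t \in \wn{A}$ so that iterating the action of $t$ is legal, but this is automatic from $x\in\wn{A}$ as noted above.
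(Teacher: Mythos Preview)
Your proof is correct and follows essentially the same approach as the paper: both compute the three pairwise differences, factor each as $\Phi(x)$ divided by a product of $x$ and $1-x$ (up to sign), and conclude using that $x$, $1-x$, and $\Phi(x)$ are all units for $x\in\twn{A}$.
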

\begin{proof}
If $x\in \twn{A}$, then $x,1-x,\Phi(x)\in \unitr{A}$. The statement thus follows from the identities
\begin{eqnarray*}
x-x\cdot t &=& x-1+\frac{1}{x}=\frac{\Phi(x)}{x}\\
x-x\cdot t^2 &=& x-\frac{1}{1-x}=\frac{\Phi(x)}{x-1}\\
x\cdot t-x\cdot t^2 &=& 1-\frac{1}{x}-\frac{1}{1-x}=\frac{\Phi(x)}{x(x-1)}.\\
\end{eqnarray*}
\end{proof}

\begin{lem}\label{lem:beta} 
Let $A$ be a local ring whose residue field, $k$, has at least $10$ elements. 
Let $L_n=L_n(A)= \Z[X_{n+1}]$ as above. Let $F_\bullet$ be the (right) homogeneous standard resolution of $\Z$ over 
$\Z[G]$. Then an augmentation-preserving chain map of right $\Z[G]$-modules 
$F_\bullet\to L_\bullet$ in dimensions three and below  can be constructed as follows:

Let $x\in \twn{A}$ and let $y\in \twn{A}$ with 
\[
\pi(y)\not\in \{ \pi(x), \pi(x)\cdot t, \pi(x)\cdot t^2\}\subset  k.
\]

Then define $\beta_n^{x,y}:F_n\to L_n$ as follows:

Given  $g\in G$,  let $\beta_0^{x,y}(g)=x\cdot g \in A\subset X_1$.

Given $g_0,g_1\in G$ let 
\[
\beta_1^{x,y}(g_0,g_1)=
\left\{
\begin{array}{ll}
(x\cdot g_0,x\cdot g_1),& g_0\not= g_1\\
0,& g_0=g_1\\
\end{array}
\right.
\]

Given $g_0,g_1,g_2\in G$, let 
\[
\beta_2^{x,y}(g_0,g_1,g_2)=
\left\{
\begin{array}{ll}
(x\cdot g_0,x\cdot g_1,x\cdot g_2),& \mbox{ if } g_0,g_1,g_2 \mbox{ are distinct}\\
0,& g_0=g_1\mbox{ or } g_1=g_2\\
(y\cdot g_0,x\cdot g_0,x\cdot g_1)&\\
+(y\cdot g_0,x\cdot g_1,x\cdot g_0),& \mbox{ if }g_0=g_2\not= g_1\\
\end{array}
\right.
\]

Given $g_0,g_1,g_2,g_3\in G$ let
\[
\beta_3^{x,y}(g_0,g_1,g_2,g_3)=
\left\{
\begin{array}{ll}
0,& g_i=g_{i+1}\mbox{ for some } 0\leq i\leq 2 \\
(y\cdot g_0,y\cdot g_1,x\cdot g_0,x\cdot g_1)&\\
+(y\cdot g_0,y\cdot g_1,x\cdot g_1,x\cdot g_0),& \mbox{ if }g_0=g_2\not= g_1\mbox{ and } g_1=g_3\\
(x\cdot g_0,y\cdot g_0,x\cdot g_1,x\cdot g_2)&\mbox{ if } g_1=g_3 \mbox{and }\\
+(x\cdot g_0,y\cdot g_1,x\cdot g_2,x\cdot g_1),& g_0,g_1,g_2\mbox{ are distinct}\\
(y\cdot g_0,x\cdot g_0,x\cdot g_1,x\cdot g_3)&\mbox{ if } g_0=g_2 \mbox{ and }\\
-(y\cdot g_0,x\cdot g_1,x\cdot g_0,x\cdot g_3),& g_0,g_1,g_3\mbox{ are distinct}\\
(y\cdot g_0,x\cdot g_1,x\cdot g_2,x\cdot g_0)&\mbox{ if } g_0=g_3 \mbox{ and}\\
-(y\cdot g_0,x\cdot g_0,x\cdot g_1,x\cdot g_2),& g_0,g_1,g_2\mbox{ are distinct}\\
\end{array}
\right.
\]
\end{lem}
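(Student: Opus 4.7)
My goal is to verify that $\beta_\bullet^{x,y}$ is a well-defined, augmentation-preserving, $G$-equivariant chain map in dimensions at most $3$. This requires four checks: (a) each tuple appearing on a right-hand side actually lies in $X_{n+1}(A) \subset L_n$; (b) $G$-equivariance of each $\beta_n^{x,y}$; (c) augmentation-preservation $\epsilon \circ \beta_0^{x,y} = \epsilon$; and (d) the chain identity $\delta_n \circ \beta_n^{x,y} = \beta_{n-1}^{x,y} \circ \partial_n$ for $n = 1, 2, 3$.

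Check (a) reduces, since $A$ is local, to pairwise distinctness modulo $\mathcal{M}$ of the entries in each tuple. By Lemma \ref{lem:twn} applied to $\pi(x)$ and to $\pi(y)$, the residue-field orbits $\{\pi(x)\cdot t^i\}_{i=0,1,2}$ and $\{\pi(y)\cdot t^i\}_{i=0,1,2}$ each consist of three pairwise distinct elements, and the hypothesis on $\pi(y)$ makes the two orbits disjoint. Existence of such $y$ is where $|k| \geq 10$ enters: one has $|\twn{k}| \geq |k| - 4$, and we only need to remove at most the three-element $t$-orbit of $\pi(x)$. Check (b) is immediate by inspection, since each formula is built from right multiplications of fixed vertices $x$, $y$ by the $g_i$, and $G$ acts diagonally on the homogeneous bar resolution. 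Check (c) is trivial because the augmentation $\epsilon : \Z[X_1] \to \Z$ sends every generator to $1$.

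Check (d) for $n = 1, 2$ is a short case analysis on the degeneracy pattern of $(g_0, g_1)$ (two cases) and of $(g_0, g_1, g_2)$ (four cases: all distinct, $g_0 = g_1$, $g_1 = g_2$, $g_0 = g_2 \neq g_1$). In each case, the two sides of the chain identity either both vanish, or both expand to the same alternating sum of elements of $X_n(A)$; the $g_0 = g_2 \neq g_1$ branch is the first place where the auxiliary vertex $y$ must be inserted to supply a valid face.

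The main obstacle is (d) in degree three. Since $|G| = 3$, the nondegenerate quadruples (those with $g_i \neq g_{i+1}$ for all $i$) partition into exactly four $G$-orbits, matching the four nonzero subcases of $\beta_3^{x,y}$: (A) $g_0 = g_2$ and $g_1 = g_3$ with $g_0 \neq g_1$; (B) $g_1 = g_3$ and $g_0, g_1, g_2$ pairwise distinct; (C) $g_0 = g_2$ and $g_0, g_1, g_3$ pairwise distinct; (D) $g_0 = g_3$ and $g_0, g_1, g_2$ pairwise distinct. For each subcase I would expand $\delta_3 \beta_3^{x,y}(g_0, g_1, g_2, g_3)$ as the alternating sum of faces of the one or two $4$-tuples in the formula, and separately expand $\beta_2^{x,y}\bigl(\partial_3(g_0, g_1, g_2, g_3)\bigr)$ by applying $\beta_2^{x,y}$ to each of the four faces of $(g_0, g_1, g_2, g_3)$, in each case landing in the appropriate branch (all-distinct, adjacent-collapse, or $g_0 = g_2 \neq g_1$) of the definition of $\beta_2^{x,y}$. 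The two sides then match term-by-term: the faces of $\beta_3^{x,y}$ that delete a $y$-vertex cancel against the faces of $\beta_2^{x,y}$ that do not insert one, and the faces that delete an $x$-vertex match the inserted-$y$ branch of $\beta_2^{x,y}$ applied on the boundary side. The reason $\beta_3^{x,y}$ is written as a sum (or difference) of two tuples in cases (A)--(D), rather than a single tuple, is precisely to produce the terms needed for this cancellation. The sign bookkeeping across the four cases, and the careful matching of face patterns, is the tedious but essential heart of the argument.
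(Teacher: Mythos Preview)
Your proposal is correct and follows essentially the same approach as the paper, which simply asserts that the image of $\beta_n^{x,y}$ lies in $\Z[\discn{n}{A}]$ by Lemma~\ref{lem:twn} and that the chain-map property is a ``straightforward computation''. Your expansion of that computation into the checks (a)--(d), and your classification of the nondegenerate degree-three patterns into the four types (A)--(D), is exactly what the paper leaves to the reader; one small terminological slip is that these four types are not $G$-orbits but rather coincidence patterns (each consisting of two diagonal $G$-orbits), though this does not affect the argument since $G$-equivariance is verified separately in (b).
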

\begin{proof} By Lemma \ref{lem:twn}, the image of $\beta_n^{x,y}$ lies in $\Z[\discn{n}{A}]\subset \Z[X_{n+1}]$. It is a 
straightforward computation to verify that this map is an augmentation-preserving chain map. 
\end{proof}

\begin{cor}\label{cor:beta}
Let $x,y$ be chosen as in Lemma \ref{lem:beta}.  Let
\[
C:=C(x,y)=\rcr(y,x\cdot t,x\cdot t^2,x)-\rcr(y,x,x\cdot t,x\cdot t^2)+\rcr(y,y\cdot t,x,x\cdot t) +
\rcr(y,y\cdot t,x\cdot t,x)\in \rpb{A}.
\]
Then $C$ is independent of the choice of $x,y$, $C\in \rbl{A}$ and  $3C=0$. 
\end{cor}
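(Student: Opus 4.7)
The plan is to interpret $C(x,y)$ as the image of a distinguished element of $F_3$ under the composite $\rcr \circ \beta^{x,y}_3$, and then to extract the three claims via standard homological algebra. Direct comparison with the case analysis in Lemma~\ref{lem:beta} reveals
\[
C(x,y) = \rcr\bigl(\beta^{x,y}_3(z)\bigr), \qquad z := (1,t,t^2,1) + (1,t,1,t) \;\in\; F_3,
\]
since $(1,t,t^2,1)$ triggers the fourth case of $\beta^{x,y}_3$ (yielding the first two summands of $C$, with signs $+$ and $-$) and $(1,t,1,t)$ triggers the first case (yielding the last two). The degenerate $3$-tuples appearing in $d_3(z)$ with consecutive equal entries lie in $\ker\beta_2$, so that modulo $\mathrm{image}(\delta_4)$ and modulo the $\spl{2}{A}$-action, $\beta^{x,y}_3(z)$ represents a genuine homology class.

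Because $L_\bullet$ is a partial resolution of $\Z$ (acyclic in positive degrees below $\card{k} \geq 10$ by the preceding lemma), the fundamental comparison theorem furnishes the extension of $\beta^{x,y}_\bullet$ to degree $4$ and ensures that any two augmentation-preserving $G$-equivariant chain maps $F_\bullet \to L_\bullet$ are chain homotopic through degree $3$. This immediately yields the \emph{independence of $(x,y)$}: two admissible pairs give chain-homotopic maps, so $\beta^{x,y}_3(z) - \beta^{x',y'}_3(z)$ lies in $\mathrm{image}(\delta_4)$ modulo $\spl{2}{A}$, and is killed upon applying $\rcr$. For the \emph{relation $3C = 0$}, one uses $H_3(G,\Z) \cong \Z/3$: writing $3[z] = d_4(w)$ in the coinvariant complex gives $3\beta^{x,y}_3(z) = \delta_4\beta^{x,y}_4(w)$ in $L_3$, whence $3C = \rcr(3\beta^{x,y}_3(z)) = 0$ in $\rpb{A}$.

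The most delicate claim, $C \in \rbl{A}$, requires verifying $\lambda_1(C) = 0$ and $\lambda_2(C) = 0$. The map $\lambda_2$ factors through $\pb{A}$, and the standard identity that $\lambda \circ (\text{classical cross-ratio}) = (\text{boundary } L_2 \to \asym{2}{\Z}{\unitr{A}}) \circ \delta_3$, combined with the cycle condition for $\beta^{x,y}_3(z)$ in the $\gl{2}{A}$-coinvariants, gives $\lambda_2(C) = 0$. For $\lambda_1\colon \rpb{A} \to \aug{A}^2$, a direct expansion using the refined cross-ratio formula and the identities of Lemma~\ref{lem:sus2} produces contributions of the form $\an{\cdot}\pf{\cdot}\pf{\cdot}$ across the four terms of $C$, which one verifies cancel. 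The main obstacle is controlling the failure of $\beta^{x,y}_3(z)$ to be a literal cycle in $L_\bullet$: the residual discrepancy $\beta_2 \circ d_3(z) \neq 0$ in $L_2$ must be shown to vanish in the precise quotients needed to witness $\rbl{A}$-membership, and this forces both the $\lambda_1$ and $\lambda_2$ cancellation checks to be carried out by explicit computation rather than purely by functoriality.
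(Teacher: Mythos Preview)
Your element $z = (1,t,t^2,1) + (1,t,1,t)$ is \emph{not} a cycle in the coinvariant bar complex $(F_\bullet)_G$: a direct computation gives $d_3(z) = -(1,1,t) + (1,t,t)$ in $(F_2)_G$, which is nonzero. (In $F_2$ itself, $d_3(z)$ contains several non-degenerate terms such as $(t,t^2,1)$ and $(1,t^2,1)$; these only cancel after passing to $G$-coinvariants, leaving the two degenerate terms above. So your sentence about ``degenerate $3$-tuples'' is at best imprecise.) Because $z$ fails to be a cycle, both of your key deductions break: in the chain-homotopy formula $(\beta_3 - \beta'_3)(z) = \delta_4 h_3(z) + h_2\, d_3(z)$ the term $h_2(d_3(z))$ has no reason to be a boundary, so independence of $(x,y)$ does not follow; and you cannot write $3z = d_4(w)$ in $(F_\bullet)_G$ since $3z$ is not even a cycle, so $3C=0$ is not justified.

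The fix is immediate and is exactly what the paper does: replace $z$ by the three-term element
\[
z' = (1,t,t,t^2) + (1,t,t^2,1) + (1,t,1,t).
\]
This $z'$ \emph{is} a cycle in $(F_\bullet)_G$ representing a generator of $\ho{3}{G}{\Z}\cong\Z/3$, and since $(1,t,t,t^2)$ has $g_1=g_2$ one has $\beta_3^{x,y}(1,t,t,t^2)=0$, whence $\beta_3^{x,y}(z') = \beta_3^{x,y}(z)$ and the same $C$ results. With a genuine cycle in hand, your homological-algebra argument for independence and for $3C=0$ goes through verbatim.

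For the membership $C\in\rbl{A}$, the paper bypasses your explicit verification of $\lambda_1(C)=\lambda_2(C)=0$ entirely. The map $\alpha_3(G)$ factors as
\[
\ho{3}{G}{\Z}\to \ho{3}{\spl{2}{A}}{\Z}\to H_3\bigl((L_\bullet)_{\spl{2}{A}}\bigr),
\]
and the second arrow is the edge homomorphism of the hyperhomology spectral sequence of Theorem~\ref{thm:h3sl2A}, whose image after tensoring with $\zhalf{\Z}$ is $E^\infty_{0,3}=\zhalf{\rbl{A}}$. Since $3C=0$, the element $C$ is unaffected by inverting $2$, so $C\in\rbl{A}$. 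This is both shorter and more conceptual than expanding the four refined cross-ratios and chasing cancellations.
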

\begin{proof}
Let $K\to \spl{2}{A}$ be any group homomorphism. Then there is a hypercohomolgy spectral sequence 
\[
E^1_{p,q}(K)=\ho{p}{K}{L_q}\Longrightarrow \ho{p+q}{K}{L_\bullet}
\]
and $\ho{p+q}{K}{L_\bullet}=\ho{p+q}{K}{\Z}$ when $p+q$ is not too large. 
There are associated edge homomorphisms giving a commutative diagram
\[
\xymatrix{
\ho{n}{K}{\Z}\ar[r]\ar[d]\ar^{\alpha_n(K)}[rd]
&E^2_{0,n}(K)=H_n((L_\bullet)_K)\ar[d]\\
\ho{n}{\spl{2}{A}}{\Z}\ar[r]
&
E^2_{0,n}(\spl{2}{A})=H_n((L_\bullet)_{\spl{2}{A}})
}
\]

The map $\alpha_n(K)$ can be constructed as follows: Let $F_\bullet(K)$ be a projective right $\Z[K]$-resolution of 
$\Z$ and let $\beta_\bullet:F_\bullet(K)\to L_\bullet$ be any augmentation-preserving homomorphism of right 
$\Z[k]$-complexes. Then $\alpha_n(K)$ is the induced map on $n$th homology groups associated to the map of 
complexes $F_\bullet(K)\otimes_{\Z[k]}\Z \to L_\bullet\otimes_{\Z[\spl{2}{A}]}\Z$. The map $\alpha_n(K)$ is independent of the 
choices of resolution $F_\bullet(K)$ and chain map $\beta_\bullet$.  

Applying this  to $G$, $F_\bullet$ and $\beta_3^{x,y}$ as in Lemma \ref{lem:beta}, and observing that the cycle 
$(1,t,t,t^2)+(1,t,t^2,1)+(1,t,1,t)\in F_3$ represents a generator of $\ho{3}{G}{\Z}\cong \Z/3$, we see that the map 
$\alpha_3:\Z/3=\ho{3}{G}{\Z}\to H_3((L_\bullet)_{\spl{2}{A}})$ sends $1$ to the class of 
\[
(y,x\cdot t,x\cdot t^2,x)-(y,x,x\cdot t,x\cdot t^2)+(y,y\cdot t,x,x\cdot t) +
(y,y\cdot t,x\cdot t,x)\in L_3
\] 

But the proof of Theorem \ref{thm:h3sl2A} shows that  $H_3((L_\bullet)_{\spl{2}{A}})\cong\rpbker{A}$ and that this 
isomorphism is induced by the refined cross ratio map.
 
Thus $1\in \Z/3$ maps to $C=C(x,y)\in \rpbker{A}$ under $\alpha_3$. It follows that $C$ is independent of $x,y$ and 
that $3C=0$.

Finally, the proof of Theorem \ref{thm:h3sl2A} shows that, at least after tensoring with $\zhalf{\Z}$,
 the image of the edge homomorphism lies in $E^\infty_{0,3}=\zhalf{\rbl{A}}$. Since $3C=0$, it follows that 
$C\in \rbl{A}$.
\end{proof}
\begin{thm} \label{thm:df}
Let $A$ be a local ring whose residue field has at least $10$ elements. Then 
\begin{enumerate}
\item For all $x\in \unitr{A}$, $\pf{x}\cconst{A}=\suss{1}{x}-\suss{2}{x}$.
\item  $\pf{x}\cconst{A}=0$ if $x\in \unitr{A}$ is of the form $\pm\Phi(a)u^2$ for some $a,u\in \unitr{A}$. 
\end{enumerate}
\end{thm}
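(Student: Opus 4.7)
Both parts are deduced from the constant cycle $C = C(x,y) \in \rbl{A}$ produced by Corollary~\ref{cor:beta}. The crucial features are that $C$ is independent of the two auxiliary arguments $x,y$ and is annihilated by $3$. Part (1) is a direct extraction of the identity from a computation of $C(x,y)$, and Part (2) is a formal consequence of (1) via the cocycle properties of $\pf{\cdot}$ and $\suss{i}$.

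\textbf{Part (1).} The first step is to expand each of the four refined cross-ratios in the defining formula
\[
C(x,y)=\rcr(y,xt,xt^2,x)-\rcr(y,x,xt,xt^2)+\rcr(y,yt,x,xt)+\rcr(y,yt,xt,x)
\]
using $x\cdot t = 1-x^{-1}$, $x\cdot t^2 = (1-x)^{-1}$, the analogous formulas for $y$, and the explicit description of $\rcr$. The second step is to simplify the resulting sum by systematic application of the five-term relations $S_{u,v}$, the cocycle identities of Proposition~\ref{prop:cocycle}, and the $\lambda_i$-formulas of Lemma~\ref{lem:sus2}, along with the alternative representative $\cconst{A}=\gpb{a}+\an{-1}\gpb{(1-a^{-1})^{-1}}-\suss{1}{(1-a)^{-1}}$ of the constant. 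The anticipated outcome is an expression of the form $C(x,y) = F(y) + \pf{x}\cconst{A} - \suss{1}{x} + \suss{2}{x}$, where $F(y)$ depends only on $y$. Since $C(x,y)$ is independent of $x$, subtracting the same formula at a second value $x_0$ shows that $\pf{x}\cconst{A} - \suss{1}{x} + \suss{2}{x}$ is independent of $x$. Specializing at an $x_0 \in \wn{A}$ for which $\Phi(\pi(x_0)) = 0$ in $k$ (available because $|k|\geq 10$), the class of $\cconst{A}$ becomes controlled by Lemma~\ref{lem:cconst}(2) in the residue field while the $\suss{i}{x_0}$ terms degenerate via the cocycle identities, forcing the common value to be zero.

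\textbf{Part (2).} Apply (1) with $x = \pm\Phi(a)u^2$. By the cocycle property $\pf{uv} = \pf{u} + \an{u}\pf{v}$,
\[
\pf{\pm\Phi(a)u^2}\cconst{A} = \pf{\pm 1}\cconst{A} + \an{\pm 1}\pf{\Phi(a)}\cconst{A} + \an{\pm\Phi(a)}\pf{u^2}\cconst{A},
\]
and the third summand vanishes since $\pf{u^2}=0$. For the first summand, (1) at $x=\pm 1$ gives $\pf{\pm 1}\cconst{A}=\suss{1}{\pm 1}-\suss{2}{\pm 1}$, which evaluates to $0$ by direct substitution into the definitions of $\suss{i}$ together with $2\suss{i}{-1}=0$ from Proposition~\ref{prop:cocycle}(4) and $\suss{i}{1}=0$. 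For the middle summand, substitute $x = \Phi(a)$ in (1) and compute $\suss{1}{\Phi(a)} - \suss{2}{\Phi(a)}$ using the factorizations $1 - \Phi(a) = a(1-a)$ and $1 - \Phi(a)^{-1} = a(a-1)/\Phi(a)$; combined with the cocycle identities of Proposition~\ref{prop:cocycle}(1)--(3) and Corollary~\ref{cor:psi-1}, these factorizations force the difference to collapse to zero.

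\textbf{Main obstacle.} The chief difficulty is the computational bookkeeping in Part~(1): each of the four refined cross-ratios unfolds into a substantial number of terms, and isolating the combination $\pf{x}\cconst{A} - \suss{1}{x} + \suss{2}{x}$ requires repeated, carefully ordered use of every identity in Proposition~\ref{prop:cocycle} together with the five-term relations. This calculation is exactly the one performed in \cite{hut:rbl11} for the field case; because the presentation of $\rpb{A}$ and the cocycle relations for $\suss{i}$ hold verbatim when $A$ is a local ring with residue field of size at least $10$, the argument transfers to the present setting without substantive modification.
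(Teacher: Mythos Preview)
Your overall idea of using the constant $C=C(x,y)$ from Corollary~\ref{cor:beta} is correct, but the execution has two genuine gaps.

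First, the anticipated shape $C(x,y)=F(y)+\pf{x}\cconst{A}-\suss{1}{x}+\suss{2}{x}$ is not what the calculation actually yields. The computation (carried out in \cite{hut:rbl11}, Theorem~3.12, and valid verbatim for local rings) gives
\[
C(x,y)=\an{-\Phi(x)r}\bigl(\suss{2}{r}-\suss{1}{r}-\cconst{A}\bigr),\qquad r=r(x,y)=\frac{x-y}{x-1-xy},
\]
so the ``free variable'' in which the $\suss{i}$ appear is the cross-ratio-like quantity $r$, not $x$ itself, and the whole expression carries a prefactor $\an{-\Phi(x)r}$ that you do not account for.

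Second, and more seriously, your specialization step is impossible. You propose to plug in $x_0\in\wn{A}$ with $\Phi(\pi(x_0))=0$ in $k$, but (i) such an element need not exist (for instance $\Phi(X)=X^2-X+1$ has no root in $\F{11}$), and (ii) even when it exists it is by definition \emph{excluded} from $\twn{A}$, so $C(x_0,y)$ is not defined: Lemma~\ref{lem:beta} requires $x\in\twn{A}$ precisely to ensure $\Phi(x)\in\unitr{A}$. So your mechanism for pinning down the constant cannot work.

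The paper's route runs in the opposite order. From the formula above, one first specializes $r=-u^2$ (possible since $r$ ranges over all of $\wn{A}$); using $3C=0$, $3\cconst{A}=0$ and $2\suss{i}{-u^2}=0$ this gives $C=-\an{\Phi(x)}\cconst{A}$ for every $x\in\twn{A}$. The key step you are missing is the algebraic identity
\[
\Phi(x)\,\Phi(x')=\Phi(x'')\cdot(x+x'-1)^2,\qquad x''=\frac{xx'-1}{x+x'-1},
\]
which shows $\an{\Phi(x)}C=C$, hence $C=-\cconst{A}$ and $\pf{\Phi(x)}\cconst{A}=0$; together with $\pf{-1}\cconst{A}=0$ this gives Part~(2). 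Feeding $C=-\cconst{A}$ back into the displayed formula then yields $\pf{r}\cconst{A}=\suss{1}{r}-\suss{2}{r}$ for $r\in\wn{A}$, and the cocycle identities extend this to all $x\in\unitr{A}$. Thus Part~(2) is not a formal consequence of Part~(1) via a direct evaluation of $\suss{1}{\Phi(a)}-\suss{2}{\Phi(a)}$ (a computation you have not carried out and which does not obviously collapse); rather, both parts come out of the same analysis of $C$, with (2) established first.
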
  
\begin{proof}
Choose $x,y\in \twn{A}$ as in Lemma \ref{lem:beta}. Then, the calculations in the proof of Theorem 3.12 of 
\cite{hut:rbl11} show that 
\[
C=C(x,y)= \an{-\Phi(x)r}(\suss{2}{r}-\suss{1}{r}-\cconst{A})
\]
where 
\[
r=r(x,y)=\frac{x-y}{x-1-xy}\in \wn{A}.
\]

By the Corollary, $C$ has order $3$ and is independent of $x$ and $y$.  Now, by choice of $x$ and $y$, $r$ can 
assume any value in $\wn{A}$. In particular, we can arrange for $r$ to have the form $-u^2$ for some unit $u$.
Since $4C=4\cconst{A}=0$ and $2\suss{i}{-u^2}=0$ for $i=1,2$, multiplying by $4$ gives
\[
C=-\an{\Phi(x)}\cconst{A}
\]  
for any $x\in \twn{A}$.  

Given $x\in \twn{A}$ we can find $x'\in \twn{A}$ such that $xx'-1,x+x'-1\in \unitr{A}$ and 
\[
x'':= \frac{xx'-1}{x+x'-1}\in \twn{A}.
\]
Since $\Phi(x)\Phi(x')= \Phi(x'')\cdot (x+x'-1)^2$, we have 
\[
C= -\an{\Phi(x'')}\cconst{A}=-\an{\Phi(x)}\an{\Phi(x')}\cconst{A}=\an{\Phi(x)}C
\]
for any $x\in \twn{A}$, and hence $C=-\cconst{A}$. 

It follows that $\an{\Phi(x)}\cconst{A}=\cconst{A}$ for any $x\in \twn{A}$; i.e $\pf{\Phi(x)}\cconst{A}=0$ 
for all $x\in \twn{A}$. Since $\an{-1}\cconst{A}=\cconst{A}$ also, it follows that 
\[
\pf{\pm\Phi(x)u^2}\cconst{A}=0
\]
for all $x\in \twn{A}$ and $u\in \unitr{A}$.

We now have that 
\[
-\cconst{A}=C= \an{-\Phi(x)r}(\suss{2}{r}-\suss{1}{r}-\cconst{A})
\]
for all $r\in \wn{A}$ and some $x\in\twn{A}$.
Thus
\[
-\an{\Phi(x)r}\cconst{A}=\suss{2}{r}-\suss{1}{r}-\cconst{A}\imp \pf{r}\cconst{A}=\suss{1}{r}-\suss{2}{r}
\]
since $\an{\Phi{x}}\cconst{A}=\cconst{A}$. It follows that 
\[
\pf{r}\cconst{A}=\suss{1}{r}-\suss{2}{r}
\] 
for all $r\in \wn{A}$.  

Finally, let $x\in U_{1,A}=\unitr{A}\setminus\wn{A}$. Fix $r\in \wn{A}$. Then 
$\suss{i}{x}=\suss{i}{rx}-\an{x}\suss{i}{r}$ for $i=1,2$. Furthermore, $\pf{x}=\pf{rx}-\an{x}\pf{r}$.  It follows that
\begin{eqnarray*}
\pf{x}\cconst{A}&=&\pf{rx}\cconst{A}-\an{x}\pf{r}\cconst{A}\\
&=& \suss{1}{rx}-\suss{2}{rx}-\an{x}\suss{1}{r}+\an{x}\suss{2}{r}\\
&=& \suss{1}{x}-\suss{2}{x}
\end{eqnarray*}
as required.
\end{proof}

\bibliography{H3}

\def\cprime{$'$}
\begin{thebibliography}{10}

\bibitem{sah:dupont}
Johan~L. Dupont and Chih~Han Sah.
\newblock Scissors congruences. {II}.
\newblock {\em J. Pure Appl. Algebra}, 25(2):159--195, 1982.

\bibitem{hut:cplx13}
Kevin Hutchinson.
\newblock {A Bloch-Wigner complex for $\mathrm{SL}_2$}.
\newblock {\em J. K-Theory}, {To appear}.

\bibitem{hut:rbl11}
Kevin Hutchinson.
\newblock {A refined Bloch group and the third homology of $\mathrm{SL}_2$ of a
  field}.
\newblock {\em J. Pure Appl. Algebra}, 217:2003--2035, 2013.

\bibitem{hut:laurent}
Kevin Hutchinson.
\newblock On the low-dimensional homology of {${\rm SL}\sb 2(k[t,t\sp {-1}])$}.
\newblock {\em J. Algebra}, 425:324--366, 2015.

\bibitem{hut:arxivh3sl2dv}
{Kevin Hutchinson}.
\newblock {The third homology of $\mathrm{SL}_2$ of fields with discrete
  valuation}.
\newblock {\em {arXiv:1204.5709}}.

\bibitem{knudson:book}
Kevin~P. Knudson.
\newblock {\em Homology of linear groups}, volume 193 of {\em Progress in
  Mathematics}.
\newblock Birkh\"auser Verlag, Basel, 2001.

\bibitem{mirzaii:arxivbw}
Behrooz Mirzaii.
\newblock {A Bloch-Wigner exact sequence over local rings}.
\newblock {\em {arXiv:1603.03649}}.

\bibitem{mirzaii:thirdmany}
Behrooz Mirzaii.
\newblock Bloch-{W}igner theorem over rings with many units.
\newblock {\em Math. Z.}, 268(1-2):329--346, 2011.

\bibitem{mirzaii:mok2}
Behrooz Mirzaii and Fatemeh~Y. Mokari.
\newblock A {B}loch-{W}igner theorem over rings with many units {II}.
\newblock {\em J. Pure Appl. Algebra}, 219(11):5078--5096, 2015.

\bibitem{schlicht:arxiveuler}
Marco Schlichting.
\newblock {Euler class groups, and the homology of elementary and special
  linear groups}.
\newblock {\em {arXiv:1502.05424}}.

\bibitem{sus:homgln}
A.~A. Suslin.
\newblock Homology of {${\rm GL}\sb{n}$}, characteristic classes and {M}ilnor
  {$K$}-theory.
\newblock In {\em Algebraic $K$-theory, number theory, geometry and analysis
  (Bielefeld, 1982)}, volume 1046 of {\em Lecture Notes in Math.}, pages
  357--375. Springer, Berlin, 1984.

\bibitem{sus:bloch}
A.~A. Suslin.
\newblock {$K\sb 3$} of a field, and the {B}loch group.
\newblock {\em Trudy Mat. Inst. Steklov.}, 183:180--199, 229, 1990.
\newblock Translated in Proc.\ Steklov Inst.\ Math.\ {\bf 1991}, no.\ 4,
  217--239, Galois theory, rings, algebraic groups and their applications
  (Russian).

\bibitem{weibel:kbook}
Charles~A. Weibel.
\newblock {\em The {$K$}-book}, volume 145 of {\em Graduate Studies in
  Mathematics}.
\newblock American Mathematical Society, Providence, RI, 2013.
\newblock An introduction to algebraic $K$-theory.

\end{thebibliography}
\end{document}